\newcommand{\com}[1]{}
\newcommand{\R}{\mathbb{R}}
\newcommand{\N}{\mathbb{N}}
\newcommand{\E}{\mathbb{E}}
\newcommand{\Prob}{\mathbb{P}}
\newcommand{\ProbX}{\mathbb{P}_X}
\newcommand{\U}{\mathcal{U}}
\newcommand{\Unif}{\mathcal{U}\bigl([0,1]\bigr)}
\newcommand{\Unifd}{\mathcal{U}\bigl([0,1]^d\bigr)}
\newcommand{\Qdpn}{Q_{d,p,\norm{\cdot}}}
\newcommand{\limn}{\lim_{n\to\infty}}
\newcommand{\limsn}{\limsup_{n\to\infty}}
\newcommand{\JG}{\mathcal{J}_\Gamma}
\newcommand{\J}{\mathcal{J}}
\newcommand{\grid}{\Gamma}
\newcommand{\cg}{\conv(\grid)}
\newcommand{\lIs}{\lambda_{I^{\ast}}}
\newcommand{\Ss}{\mathcal{S}}
\newcommand{\PSpace}{(\Omega_0 \times \Omega,  {\Ss}_0\otimes \Ss,
\Prob_0\otimes\Prob)}
\newcommand{\PPas}{\Prob_0\otimes \Prob\text{-a.s.}}
\newcommand{\as}{{a.s.}}
\newcommand{\VQg}{\widehat X^{\grid,\text{Vor}}}
\providecommand{\LipConst}[1]{[#1]_{\text{Lip}}}
\newcommand{\Fp}{F^p}
\newcommand{\Fpn}{F^p_{n}}
\newcommand{\Fep}{F_p}
\newcommand{\Fepn}{F_{n,p}}
\newcommand{\Fbp}{\bar F^p}
\newcommand{\Fbpn}{\bar F^p_n}
\newcommand{\Febp}{\bar F_p}
\newcommand{\Febpn}{\bar F_{p,n}}
\newcommand{\Ig}{\mathcal{I}(\grid)}
\providecommand{\Igg}[1]{\mathcal{I}(#1)}
\newcommand{\gk}{{\gamma_k}}
\newcommand{\gz}{{\gamma_0}}
\newcommand{\dqp}{d^p}
\newcommand{\dqpn}{\dqp_n}
\newcommand{\dqbp}{\bar d^p}
\newcommand{\dqbpn}{\dqbp_n}
\newcommand{\dep}{d_p}
\newcommand{\depn}{d_{n,p}}
\newcommand{\debp}{\bar d_p}
\newcommand{\debpn}{\bar d_{n,p}}
\newcommand{\DQO}{\JG^\ast}
\providecommand{\depnn}[1]{d_{#1,p}}
\providecommand{\debpnn}[1]{\bar d_{#1,p}}
\providecommand{\Fpnn}[1]{F^p_{#1}}
\providecommand{\Fepnn}[1]{F_{#1,p}}
\newcommand{\Bb}{\mathfrak{B}}
\newcommand{\EA}{\E_{\Prob_0}}
\providecommand{\abs}[1]{\lvert#1\rvert}
\providecommand{\Bigabs}[1]{\Bigl\lvert#1\Bigr\rvert}
\providecommand{\norm}[1]{\lVert#1\rVert}
\providecommand{\bignorm}[1]{\bigl\lVert#1\bigr\rVert}
\providecommand{\normdp}[1]{\norm{#1}_{d/(d+p)}}
\providecommand{\Lpnorm}[1]{\norm{#1}_{L^p}}
\providecommand{\bigLpnorm}[1]{\bignorm{#1}_{L^p}}
\providecommand{\enorm}[1]{\abs{#1}_{2}}
\providecommand{\Bigenorm}[1]{\Bigabs{#1}_{2}}
\providecommand{\pnorm}[1]{\abs{#1}_{p}}
\providecommand{\ind}[1]{\mathbbm{1}_{#1}}
\providecommand{\consLPk}[1]{\text{s.t. } \left[  \begin{smallmatrix}
          x_1 & \cdots & x_n\\
          1 & \cdots & 1\\
        \end{smallmatrix}  \right] \lambda =
      \left[\begin{smallmatrix}
         #1\\ 1\\
        \end{smallmatrix} \right],\,
      \lambda \geq 0 }
\providecommand{\LPk}[1]{
\underset{\consLPk{#1}}{\min_{\lambda\in\R^n}\sum_{i=1}^n \lambda_i \,
\norm{#1-x_i}^p}}
\DeclareMathOperator{\supp}{supp}
\DeclareMathOperator{\conv}{conv}
\DeclareMathOperator{\dist}{dist}
\DeclareMathOperator{\rk}{rk}
\DeclareMathOperator{\adim}{aff.dim}
\DeclareMathOperator{\Id}{Id}
\DeclareMathOperator{\spann}{span}
\newcommand{\spX}{\supp(\ProbX)}
\theoremstyle{plain}
\newtheorem{prop}{Proposition}
\newtheorem{definition}{Definition}
\newtheorem{lemma}{Lemma}
\newtheorem{thm}{Theorem}
\theoremstyle{remark}
\newtheorem*{remark}{Remark}
\title{Intrinsic stationarity for vector quantization: Foundation of dual
quantization}
\date{}
\author{{\sc Gilles Pag\`es} \thanks{Laboratoire de Probabilit\'es et Mod\`eles al\'eatoires, UMR~7599, Universit\'e Paris 6, case 188, 4,
pl. Jussieu, F-75252 Paris Cedex 5. E-mail: {\tt  gilles.pages@upmc.fr}} 
\quad {and} 
\quad {\sc Benedikt Wilbertz}\thanks{Laboratoire de Probabilit\'es et Mod\`eles al\'eatoires. E-mail: {\tt
benedikt.wilbertz@gmx.fr}} }
\begin{document}

\maketitle

\begin{abstract}
We develop  a new approach to vector quantization,
which guarantees an intrinsic stationarity property that also holds, in
contrast to regular quantization, for non-optimal quantization grids.
This goal is achieved by replacing the usual nearest neighbor projection
operator for Voronoi quantization by a random splitting operator,
which maps the random source to the vertices of a triangle of $d$-simplex.
In the quadratic Euclidean case, it is shown that these triangles or
$d$-simplices make up a Delaunay triangulation of the underlying grid.

Furthermore, we prove the existence of an optimal grid for this Delaunay -- or
dual -- quantization procedure.
We also provide a stochastic optimization method to compute such optimal grids, 
here for higher dimensional uniform and  normal distributions. 
A crucial feature of this new approach is the fact that it automatically leads
to a second order quadrature
formula for computing expectations, regardless of the optimality of the
underlying grid.
\end{abstract}

\bigskip
\noindent {\em Keywords: Quantization, Stationarity, Voronoi tessellation,
Delaunay triangulation, Numerical integration.}

\bigskip
\noindent {\em MSC \textcolor{black}{2010}: 60F25, 65C50, 65D32}

\section{Introduction and motivation}\label{sec:intro}

Quantization of random variables aims at finding the best $p$-th mean
approximation to a random vector (r.v.) $X: (\Omega, \mathcal{S}, \Prob) \to (\R^d,
\mathcal{B}^d)$ and $\R^d$ equipped with a norm $\norm{\cdot}$. That means,
for $X\in L^p_{\R^d}(\Prob),\, p >0$, that we have to minimize
\begin{equation}\label{eq:intro1}
	\E \min_{x\in \grid} \norm{X-x}^p 
\end{equation}
over all finite grids $\grid \subset \R^d$ of a given size (the term {\em grid} is a convenient synonym for 
nonempty finite subset of $\R^d$). 
This problem has its origin in the fields of signal processing in the late
1940s. A mathematically rigorous and comprehensive exposition of this topic can be
found in the book of Graf and Luschgy \cite{Foundations}.

\medskip
Using the nearest neighbor projection, we are able to construct a random
variable $\widehat X^\grid$, which achieves the minimum in (\ref{eq:intro1}).
Such an approximation, which is called Voronoi quantization, has been
successfully applied to various problems in applied probability theory and mathematical finance, $e.g.$ multi-asset American/Bermudan style
options pricing and $\delta$-hedging (see \cite{ballyPages,american}), swing
options, supply gas contract,  on energy markets (Stochastic control)
(see \cite{BBP1, BBP2, BPW}), nonlinear filtering method for stochastic
volatility estimation (see \cite{pages2, sellamiPham, sellami2008,
sellami2010}), discretization of SPDE's (stochastic Zakai and McKean-Vlasov
equations) (see \cite{zakai}).

\medskip
Especially we may use optimal quantizations to establish numerical cubature
formulas, $i.e.$ to approximate $\E F(X)$ by 
\[
	\E F(\widehat X^\grid) = \sum_{x\in\grid} w_x \cdot F(x), 
\]
where $w_x = \Prob(\widehat X^\grid = x)$.

Such a cubature formula is known to be optimal in the class of Lipschitz
functionals and it holds for a Lipschitz functional $F$ (with Lipschitz ratio $[ F ]_{\text{Lip}}$)
\begin{equation}\label{eq:introErrBndCub}
  \abs{\E F(X) - \E F(\widehat X^\grid)} \leq [ F ]_{\text{Lip}}\; \E
  \norm{X - \widehat X^\grid}.
\end{equation}

If $F$ exhibits a bit more smoothness, $i.e.$ is  differentiable with Lipschitz continuous differential $F'$ and $\widehat X^{\Gamma}$ fulfills the so-called {\em stationarity
property}
\begin{equation}\label{eq:introStat}
	\E \bigl(  X \,|\, \widehat X^\grid \bigr) = \widehat X^\grid,
\end{equation}
we can derive by means of a Taylor expansion the second order rate
\[
\abs{\E F(X) - \E F(\widehat X^\grid)} \leq [ F' ]_{\text{Lip}}\; \E
  \norm{X - \widehat X^\grid}^2.
\]

Unfortunately, the stationarity property for the Voronoi quantization $\widehat
X^\grid$ is a rather fragile object, since it only holds for grids $\grid$
which are especially tailored and optimized for the distribution of $X$.

That means, that if a grid $\grid$, which
has been originally constructed and optimized for $X$, is employed to
approximate a r.v. $Y$ which only slightly differs from $X$, then $\grid$ might be still an
arbitrary good quantization for $Y$, $i.e.$ $\E\norm{Y-\widehat Y^\grid}^p$ is
very close to the optimal quantization error, but the stationarity property
(\ref{eq:introStat}) is in general violated. 
Thus, only the
first order bound (\ref{eq:introErrBndCub}) is in this case valid for a cubature
formula based on a Voronoi quantization of $Y$.

In this paper, we look for an alternative to the
nearest neighbor projection operator and the Voronoi quantization, which will
be capable of preserving some stationarity property in the above setting. 
In order to achieve this, we pass on to a product space $\PSpace$ and introduce
a {\em random splitting operator} $\JG:\Omega_0\times\R^d\to\grid$, which satisfies
\[
\E(\JG(Y)|Y) = Y
\]
for any $\R^d$-valued r.v. $Y$ defined on $(\Omega, \mathcal{S}, \Prob)$   such that $\supp(\Prob_Y) \subset \conv(\grid)$ 
where $\supp(\Prob_Y)$ and $ \conv(\grid)$ denote the support of the distribution $\Prob_{Y}$ and the convex hull of $\Gamma$ respectively. Note that this implies that $Y$ is compactly supported.
As a matter of facts, such an operator fulfills the so-called {\it intrinsic stationarity
property}
\begin{equation}\label{eq:introIntStat}
\E(\JG(\xi)) = \xi, \qquad \xi \in \conv(\grid).
\end{equation}
Although this stationarity differs from the one defined above, one may again
derive a second order error bound for a differentiable function $F$ with
Lipschitz derivative
\[
\abs{\E \,F(Y) - \E F(\JG(Y))} \leq [ F']_{\text{Lip}}\; \E \norm{Y - \JG(Y)}^2
\]
which now holds for any r.v. $Y$ regardless \com{of the actual choice} of the
grid $\grid$, except satisfying $\supp(\Prob_Y) \subset
\conv(\grid)$.

\medskip On our way, we will  make the connection with functional approximation by noting that  the functional operator 
related to ${\cal J}_{\Gamma}$ defined by
\[
 \mathbb{J}_{\Gamma}(F):=\Big(\xi\longmapsto \E_{\Prob_0} F\big({\cal J}_{\Gamma}(\omega_0,\xi)\big) \Big)
\]
 is in standard situations a (classical) continuous piecewise affine interpolation approximation of $F$.

One may naturally  ask at this stage for the best possible approximation power
of $\JG(X)$ to $X$, $i.e.$ minimize the $p$-th power mean error
\[
\E \norm{X - \JG(X)}^p
\]
over all grids of size not exceeding $n$ and all random operators $\JG$ 
fulfilling the intrinsic stationarity property (\ref{eq:introIntStat}).

This means, that we will deal for $n\in\N$ with the mean error modulus
\begin{equation}\label{eq:IntroDefDQ}
\begin{split}
\dqpn(X) = \inf\bigl\{ \E \norm{X -  \JG(X)}^p: &\, \grid\subset\R^d,
\abs{\grid}\leq n,\, \supp(\ProbX) \subset\cg,\\
& \quad\JG:\Omega_0\times\R^d\to\Gamma
\text{ intrinsic stationary} \bigr\}
\end{split}
\end{equation}
where $|\Gamma|$ denotes the cardinality of $\Gamma$.

It will turn out in Section~\ref{sec:defs} that the problem of finding an
optimal random operator $\JG$ for a grid $\grid = \{ x_1, \ldots, x_k\}, k \leq
n$, is equivalent to solving the Linear Programming problem

\begin{equation}
  \label{eq:IntroLP}
\underset{\text{s.t. } \left[  \begin{smallmatrix}
          x_1 & \ldots & x_k\\
          1 & \ldots & 1\\
        \end{smallmatrix}  \right] \lambda =
      \left[\begin{smallmatrix}
         X(\omega)\\ 1\\
        \end{smallmatrix} \right],\,
      \lambda \geq 0}{\min_{\lambda\in\R^n}\sum_{i=1}^k \lambda_i \, \norm{X(\omega)-x_i}^p}
 \end{equation}

where $\displaystyle  \left[  \begin{smallmatrix}
          x_1 & \ldots & x_k\\
          1 & \ldots & 1\\
        \end{smallmatrix}  \right] \lambda =
      \left[\begin{smallmatrix}
         \sum_{1\le i\le k} \lambda_ix_i\\ \sum_{1\le i\le k} \lambda_i\\
        \end{smallmatrix} \right]$. Defining the local dual quantization function as 
\[
	\Fp(\xi, \grid) =\underset{\text{s.t. } \left[  \begin{smallmatrix}
          x_1 & \ldots & x_k\\
          1 & \ldots & 1\\
        \end{smallmatrix}  \right] \lambda =
      \left[\begin{smallmatrix}
         \xi\\ 1\\
        \end{smallmatrix} \right],\,
      \lambda \geq 0}{\min_{\lambda\in\R^n}\sum_{i=1}^k \lambda_i \, \norm{\xi-x_i}^p},
      \]
we will show that
\begin{equation}\label{eq:IntroDQGridOpt}
	\dqpn(X) = \inf \bigl\{ \E \,\Fp(X; \Gamma): \Gamma \subset \R^d,
	\abs{\Gamma}\leq n \bigr\}.
\end{equation}

This means, that the dual quantization problem actually consists of two phases:
during the first one we have to locally solve the optimization
problem (\ref{eq:IntroLP}), whereas phase two, which consists of the global
optimization over all possible grids in (\ref{eq:IntroDQGridOpt}), is the more
involved problem. It is highly non-linear and contains a probabilistic
component by contrast to phase one which can be considered more or less as
deterministic.

Moreover, we will see in section \ref{sec:Delaunay} that the solution to the
Linear Programming (\ref{eq:IntroLP}) is in the quadratic Euclidean case
completely determined by the Delaunay triangulation spanned  by $\grid$ and this structure
is, in the graph theoretic sense, the dual counterpart of the Voronoi diagram,
on which regular quantization is based. That is actually also the reason, why we
call this new approach dual or Delaunay quantization.

In section \ref{sec:defs}, we propose an extension of
the dual quantization idea to non-compactly supported random variables. 
For those and
the compactly supported r.v.'s we prove the existence of optimal quantizers in
section \ref{sec:existence}, $i.e.$ the fact, that there are sets
$\grid$, which actually achieve the infimum in (\ref{eq:IntroDefDQ}).
Finally, in section \ref{sec:numerical}, we give numerical illustrations of
some optimal dual quantizers and numerical procedures to generate them. 

\medskip In a companion paper~\cite{dualSharpRate}, we establish the counterpart of the
celebrated Zador theorem for regular vector quantization:
namely we elucidate the sharp rate for the mean dual quantization error modulus
defined in section \ref{sec:defs} below.

We also provide in~\cite{dualSharpRate} a non-asymptotic version of this
theorem, which corresponds to the Pierce Lemma. 

\medskip First numerical applications of dual quantization to Finance have been developed in a second companion paper~\cite{dualAppl}, especially for the pricing of American style derivatives  like Bermuda and swing options.

\bigskip
{\sc Notation:}  $\bullet$ $u^T$ will denote the transpose of the column vector $u\in\R^d$.
 
  \noindent $\bullet$ Let $u = (u_1, \ldots, u_d)\! \in \R^d$, we write $u \geq 0 $ (resp. $>
  0$) if $u_i \geq 0$ (resp $> 0$), $ i = 1, \ldots, d$.
  
 \noindent $\bullet$ $\Delta_d :=\{x=(x^0,\ldots,x^d)\!\in \R_+^{d+1},\; x^0+\cdots+x^d=1\}$ denotes the canonical simplex of $\R^{d+1}$.
 
 \noindent $\bullet$ $B_{ \norm{.}}(x_0,r)$ is the closed ball of center $x_0\!\in \R^d$ and radius $r\ge0$ in $(\R^d, \norm{.})$.
  
\noindent $\bullet$ ${\rm rk}(M)$ denotes the rank of the matrix $M$. 
    
\noindent $\bullet$ $\ind{A}$ denotes the indicator function of the set $A$, $|A|$ its cardinality.
 
\noindent $\bullet$ If $A\subset E$, $E$ $\R$-vector space, $\spann A$ denotes the sub-vector space spanned by $A$.

\noindent $\bullet$ Let $(A_n)_{n\ge1}$ be a sequence of sets: $\limsup_n A_n := \cap_n \cup_{k\ge n}A_k$ and $\liminf_n A_n := \cup_n \cap_{k\ge n}A_k$.

\noindent $\bullet$ $\lambda^d$ denotes the Lebesgue measure on   ($\R^d,\Bb(\R^d))$ (Borel $\sigma$-field).

\section{Dual quantization and intrinsic stationarity}\label{sec:defs}

First, we briefly recall the definition of the ``regular'' vector quantization
problem for a r.v. $X: (\Omega, \mathcal{S}, \Prob) \to (\R^d, \mathcal{B}^d)$ and $\R^d$ equipped
with a norm $\norm{\cdot}$.

\begin{definition}
Let $X\in L^p_{\R^d}(\Prob)$ for some $p\in[1,+\infty)$.
\begin{enumerate}
  \item We define the  (regular) $L^p$-mean quantization error for a grid
  $\grid=\{x_1, \ldots, x_n\} \subset \R^d$ as
\[ 
	e_p(X; \grid)  =  \bigLpnorm{\min_{1\leq i\leq k} \norm{X-x_i}} = \bigl(\E
	\min_{1\leq i\leq n} \norm{X-x_i}^p\bigr)^{1/p},
\]  
	\item The optimal regular quantization error, which can be achieved by a grid
	$\grid$ of size not exceeding $n\in\N$, is given by
\[ 
	e_{n,p}(X) = \inf \bigl\{ e_p(X; \grid): \grid \subset \R^d, \abs{\grid}\leq n 
	\bigr\}.
\] 
\end{enumerate}
\end{definition}

\begin{remark}
Since we will frequently consider the $p$-th power of $e_p(X; \grid)$ and
$e_{n,p}(X)$, we will drop a duplicate index $p$ and write, e.g. $e_{n}^p(X)$
instead of $e_{n,p}^p(X)$. 
\end{remark}\medskip

It can be shown, that (at least) one optimal quantizer actually exists, $i.e.$ for
every $n\in\N$ there is a grid $\grid\subset\R^d$ with $\abs{\grid}\leq n$ such that
\[
	e_p(X;\grid) = e_{n,p}(X).
\]

Moreover, this definition of the optimal quantization error is in fact
equivalent to defining $e_n^p(X)$ as the best approximation error which can be
achieved by a Borel transformation or by a discrete r.v. $\widehat X$  taking
at most $n$ values.
 
\begin{prop}\label{prop:regularQNN}
Let $X\in L^p_{\R^d}(\Prob),\, n\in\N$. Then
\begin{equation*}
\begin{split}
e^p_n(X) & = \inf \bigl\{ \E \norm{X-f(X)}^p: f: \R^d \to \R \text{ Borel
measurable}, \abs{f(\R^d)}\leq n \bigr\}\\
& = \inf \bigl\{ \E \norm{X-\widehat X}^p: \widehat X \text{ is a r.v. with }
\abs{\widehat X(\Omega)}\leq n \bigr\}.
\end{split}
\end{equation*}
\end{prop}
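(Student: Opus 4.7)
The strategy is to show a chain of inequalities among the three quantities, denoting them (i), (ii), (iii) in the order they appear. Since each formulation is manifestly a relaxation of the previous one, the essential work is to show (iii) $\geq$ (i).

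First I would prove (i) $\geq$ (ii). Given a grid $\grid = \{x_1,\dots,x_k\}$ with $k\leq n$, I construct a nearest-neighbor projection $f_\grid : \R^d\to\grid$ by setting $f_\grid(\xi) = x_{i^\ast(\xi)}$ with $i^\ast(\xi) := \min\bigl\{i\in\{1,\dots,k\} : \norm{\xi-x_i} = \min_{1\le j\le k}\norm{\xi-x_j}\bigr\}$. The lexicographic tie-breaking ensures $f_\grid$ is Borel measurable (the pre-image of each $x_i$ is a finite intersection and difference of the closed sets $\{\xi : \norm{\xi-x_i}\leq \norm{\xi-x_j}\}$, which are Borel as continuous level sets). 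Clearly $\bigabs{f_\grid(\R^d)}\le k\le n$ and $\E\norm{X-f_\grid(X)}^p = \E\min_{1\le i\le k}\norm{X-x_i}^p = e^p_p(X;\grid)$. Taking the infimum over $\grid$ yields (i) $\geq$ (ii).

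The step (ii) $\geq$ (iii) is immediate: if $f:\R^d\to\R^d$ is Borel with $\abs{f(\R^d)}\leq n$, then $\widehat X := f(X)$ is a r.v.\ taking at most $n$ values, and $\E\norm{X-f(X)}^p$ appears in the infimum defining (iii).

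For the closing step (iii) $\geq$ (i), let $\widehat X$ be any r.v.\ with $\widehat X(\Omega)=\{y_1,\dots,y_m\}$, $m\le n$. Set $\grid := \widehat X(\Omega)$; then $\abs{\grid}\le n$ and pointwise
\[
\norm{X(\omega)-\widehat X(\omega)}^p \;\ge\; \min_{1\le i\le m}\norm{X(\omega)-y_i}^p.
\]
Integrating gives $\E\norm{X-\widehat X}^p \ge e^p_p(X;\grid)\ge e^p_n(X)$, and taking the infimum over $\widehat X$ closes the loop.

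The only genuinely delicate point is the Borel measurability of the nearest-neighbor assignment used in (i) $\geq$ (ii); everything else reduces to rewriting definitions or using the trivial pointwise bound against the nearest point of a finite set. The priority-rule construction above handles the ties cleanly without invoking any measurable selection theorem.
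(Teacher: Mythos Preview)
Your proof is correct and follows essentially the same approach as the paper, which does not spell out a full argument but simply remarks that the proof ``is based on the construction of a Voronoi quantization of a r.v.\ by means of the nearest neighbour projection'' and then describes that projection. Your chain of inequalities (i) $\geq$ (ii) $\geq$ (iii) $\geq$ (i), with the nearest-neighbour map handling the first step and the trivial pointwise bound closing the loop, is exactly the intended mechanism; the explicit tie-breaking rule you give for measurability is a clean way to make precise what the paper leaves implicit in its Voronoi partition construction.
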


The proof of this proposition is based on the construction of a Voronoi
quantization of a r.v. by means of the nearest neighbour projection.

Therefore, let $\grid = \{x_1, \ldots, x_n\} \subset \R^d$ be a grid and denote
by $(C_i(\grid))_{1\leq i \leq n}$ a Borel partition of $\R^d$ satisfying
\[
	C_i(\grid) \subset \bigl\{ \xi \in \R^d: \norm{\xi - x_i} \leq
	\min_{1\leq j \leq n}\norm{\xi - x_j} \bigr\}.
\]
Such a partition is called a {\it Voronoi partition} generated by $\grid$
and we may define the corresponding {\it nearest neighbour projection} as
\[
	\pi_\grid(\xi) = \sum_{1\leq i \leq n} x_i \ind{C_i(\grid)}(\xi).
\]

The discrete r.v.
\[
	\VQg = \pi_\grid(X) = \sum_{1\leq i \leq n} x_i
	\ind{C_i(\grid)}(X)
\]
is called {\it Voronoi Quantization} induced by $\grid$ and satisfies
\[
	e_p^p(X;\grid) = \E \norm{X- \pi_\grid(X)}^p.
\]

As already mentioned in the introduction, the concept of stationarity plays an
important role in the application of quantization.
A quantization $\widehat X$ is said to be {\it stationary} for the r.v. $X$, if
it satisfies
\begin{equation}\label{eq:regStat}
	\E(X|\widehat X) = \widehat X.
\end{equation}

It is well known that in the quadratic Euclidean case, $i.e.$ $p = 2$ and
$\norm{\cdot}$ is the Euclidean norm, any optimal
quantization (a r.v. $\widehat X$ with $ \abs{\widehat X(\Omega)}\leq n$
 and $ \E \norm{X-\widehat X}^p = e^p_n(X),
$) fulfills this property (this is no longer true in the present form for $p\neq 2$ or non Eucidean norm, see~\cite{optimRadon}).

Moreover, this stationarity condition is 
equivalent to the first order optimality criterion of the optimization problem
\[
	\E \min_{1\leq i \leq n}\norm{X-x_i}^2 \to \min_{x_1, \ldots, x_n \in \R^d},
\]
$i.e.$ the Voronoi quantization $\VQg$ of a grid $\grid = \{x_1,
\ldots, x_n\}\subset\R^d$ satisfies the stationarity property (\ref{eq:regStat})
for a r.v. $X$, whenever $\grid$ is a zero of the first order derivative of the
mapping $(x_1, \ldots, x_n)\mapsto \E \min_{1\leq i \leq n}\norm{X-x_i}^2$.

\medskip
By means of this stationarity property (\ref{eq:regStat}), we can derive the
following second order error bound for a cubature formula based on quantization.

\begin{prop}\label{prop:regSecOrd}
Let $X\in L^2_{\R^d}(\Prob)$ and assume that $F\in C^{1,1}(\R^d)$ is differentiable
with Lipschitz differential. If the quantization $\widehat X^\grid$ for a grid
$\grid = \{x_1, \ldots, x_n\} = \widehat X^\grid(\Omega),\, n\in\N$ satisfies
\[
	\E(X|\widehat X^\grid) = \widehat X^\grid,
\]
then it holds for the cubature formula $\E\,F(\widehat X^\grid) = \sum_{i=1}^n
\Prob(\widehat X^\grid = x_i) \cdot F(x_i)$
\[
	\abs{\E\,F(X) - \E\,F(\widehat X^\grid)} \leq \LipConst{F'}\; \E \norm{X -
	\widehat X^\grid}^2.
\]
\end{prop}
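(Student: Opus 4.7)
The plan is to combine a first-order Taylor expansion of $F$ around the quantizer $\widehat X^\grid$ with the stationarity assumption, so that the linear term integrates to zero and only a quadratic remainder survives.

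Concretely, I would start from the pointwise identity
$F(X) - F(\widehat X^\grid) = \scalar{F'(\widehat X^\grid)}{X - \widehat X^\grid} + R$,
where the remainder admits the integral form
$R = \int_0^1 \scalar{F'(\widehat X^\grid + t(X - \widehat X^\grid)) - F'(\widehat X^\grid)}{X - \widehat X^\grid}\, dt$.
Using the Lipschitz property of $F'$ and Cauchy--Schwarz then yields the pointwise estimate $\abs{R} \leq \tfrac{1}{2}\LipConst{F'}\,\norm{X - \widehat X^\grid}^2 \leq \LipConst{F'}\,\norm{X - \widehat X^\grid}^2$, which will be the source of the final quadratic bound.

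Next I would take expectations on both sides. Since $\widehat X^\grid$ takes at most $n$ values $x_1,\ldots,x_n$, the vector $F'(\widehat X^\grid)$ is bounded; combined with $X\in L^2(\Prob)$, this guarantees that both $\scalar{F'(\widehat X^\grid)}{X-\widehat X^\grid}$ and $R$ are integrable. Conditioning on $\widehat X^\grid$ and invoking the tower property gives
$\E\,\scalar{F'(\widehat X^\grid)}{X-\widehat X^\grid} = \E\,\scalar{F'(\widehat X^\grid)}{\E(X\mid \widehat X^\grid)-\widehat X^\grid} = 0$
by the stationarity assumption (\ref{eq:regStat}). Combining with the bound on $\abs{R}$ produces $\abs{\E F(X)-\E F(\widehat X^\grid)} \leq \LipConst{F'}\,\E\norm{X-\widehat X^\grid}^2$, which is the claim (and the representation $\E F(\widehat X^\grid) = \sum_{i=1}^n \Prob(\widehat X^\grid = x_i)F(x_i)$ is automatic from the finiteness of the range).

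There is essentially no serious obstacle: the mechanism is the classical argument upgrading first-order Lipschitz bounds to $C^{1,1}$ second-order ones, and the stationarity is custom-made to kill the linear term. The only minor matter is the integrability check for $F'(\widehat X^\grid)$ (trivial because the range is finite), and noting that the sharper constant $\tfrac{1}{2}\LipConst{F'}$ one actually obtains from Taylor's formula can be relaxed to $\LipConst{F'}$ to match the statement.
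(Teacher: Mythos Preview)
Your argument is correct and follows essentially the same route as the paper: a first-order Taylor expansion of $F$ around $\widehat X^\grid$, with the linear term annihilated by the stationarity assumption via conditioning on $\widehat X^\grid$, and the remainder controlled by $\LipConst{F'}\norm{X-\widehat X^\grid}^2$. The only cosmetic differences are that you write the remainder in integral form (and thereby notice the sharper constant $\tfrac12\LipConst{F'}$) and take unconditional expectations directly, whereas the paper first conditions and then takes expectations.
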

\begin{proof}
From a Taylor expansion we obtain for $\widehat X = \widehat X^\grid$
\[
\abs{F(X) - F(\widehat X) - F'(\widehat X)(X-\widehat X)} \leq \LipConst{F'}\;
\norm{X - \widehat X}^2,
\]
so that taking conditional expectations and applying Jensen's inequality yield
\[
\abs{\E\bigl(F(X)|\widehat X\bigr) - F(\widehat X) - \E\bigl(F'(\widehat
X)(X-\widehat X)|\widehat X\bigr)} \leq \LipConst{F'}\;
\E \bigl( \norm{X - \widehat X}^2|\widehat X\bigr). 
\]
The stationarity assumption then implies
\[
	\E\bigl(F'(\widehat
X)(X-\widehat X)|\widehat X\bigr) = F'(\widehat
X)\,  \E\bigl((X-\widehat X)|\widehat X\bigr) = 0,
\]
so that the assertion follows again from taking expectations and Jensen's
inequality.
\end{proof}

Unfortunately, the above stationarity is a rather fragile property, since it
only holds for Voronoi quantizations, whose underlying grid is specifically
optimized for the distribution of $X$. Thus, this stationarity will in
general fail, as soon as we modify the underlying r.v. even only slightly.
Nevertheless, there is a second way to derive the second order error bound of
Proposition \ref{prop:regSecOrd}:

Assume that $\widehat X$ is a discrete r.v. satisfying a somewhat dual
stationarity property
\begin{equation}\label{eq:sec2DualStat}
  \E(\widehat X | X) = X.
\end{equation}
In this case we can perform, as in the proof of Proposition
\ref{prop:regSecOrd}, a Taylor expansion, but this time with respect to $X$. We
then conclude from (\ref{eq:sec2DualStat})
\[
	\E\bigl(F'(
X)(X-\widehat X)|X\bigr) = 0
\] 
so that finally the same assertion will hold.

As we will see later on, this stationarity condition will be intrinsically
fulfilled by the dual quantization operator. Thus, this new approach will be be
very robust with respect to changes in the underlying r.v.s, since it always
preserves stationarity. 

\subsection{Definition of dual quantization}
 
We define here the dual quantization error by means of the local dual quantization error
$\Fep$, since, doing so, we are able to introduce dual quantization along the lines of regular quantization.
The stationarity property (\ref{eq:sec2DualStat}) will then appear as
characterizing property of the Delaunay quantization and the dual quantization
operator, the counterpart of Voronoi quantization and the nearest neighbour
projection.

The equivalence of the following Definition~\ref{def:dualQ} and~(\ref{eq:IntroDefDQ})
will be given in Theorem~\ref{thm:DQLinkStat}, which provides an analog
statement for dual quantization to Proposition~\ref{prop:regularQNN}.

Without loss of generality assume from here on that
\[
	\spann (\spX) = \R^d,
\]
$i.e.$ $X$ is a true $d$-dimensional random variable. Otherwise we would reduce
$d$. In the definitions below, we use the usual convention $\inf\{\emptyset\}=+\infty$.

\begin{definition}\label{def:dualQ}
Let $X\in L^p_{\R^d}(\Prob)$  for some $p\in[1,\infty)$.

\smallskip
\noindent $(a)$  The local dual quantization error induced by a grid $\Gamma = \{x_1,
  \ldots, x_n\}\subset \R^d$ and $\xi\!\in \R^d$ is defined by
\[
	\Fep(\xi; \Gamma) = \inf  \left\{ \Bigl( \sum_{1\leq i \leq n} \lambda_i
	\norm{\xi - x_i}^p \Bigr)^{1/p} : \lambda_i \ge 0 \text{ and } \sum_{1\leq
	i \leq n} \lambda_i x_i = \xi,
	\sum_{1\leq i \leq n}\!\! \lambda_i = 1  \right\}.
\]
\noindent $(b)$  The $L^p$-mean dual quantization error for $X$ induced by the
  grid $\Gamma$ is then given by
\[ 
	\dep(X; \Gamma) =  \Lpnorm{\Fep(X; \Gamma)} = \Bigl( \E \inf  \Bigl\{ 
	\sum_{1\leq i \leq n} \lambda_i \norm{X - x_i}^p : \lambda_i \ge 0, 
	\sum_{1\leq i \leq n} \!\!\lambda_i x_i = X,
	\sum_{1\leq i \leq n} \!\!\lambda_i = 1 \Bigr\} \Bigr)^{1/p}.
\]
\noindent $(c)$   The optimal dual quantization error, which can be achieved by a grid
	$\Gamma$ of size not exceeding $n$ will be denoted by
\[ 
	\depn(X) = \inf \bigl\{ \dep(X; \Gamma): \Gamma \subset \R^d, \abs{\Gamma}\leq
	n \bigr\}.
\]
\end{definition}

\smallskip
\noindent {\bf Remarks.} $\bullet$  Note that, like in the case of regular (Voronoi) quantization, the optimal dual
  quantization error depends actually only on the {\em distribution} of $X$. 
 
 \smallskip
 \noindent $\bullet$  Note that $F_p(\xi,\Gamma)\ge {\rm dist}(\xi,\Gamma)$ and 
 consequently $d_p(X,\Gamma)\ge e_p(X,\Gamma)$.

 \smallskip
 \noindent $\bullet$  In most cases we will deal with the $p$-th power of $\Fep,\, \dep$ and
  $\depn$. To avoid duplicating indices, we will write $\Fp,\, \dqp$ and $\dqpn$
  instead of $\Fep^p,\, \dep^p$ and $\depn^p$.

\bigskip
Denoting $\Gamma \!=\! \{x_1, \ldots, x_n\}$, we recognize that $\Fp(\xi; \Gamma)$
is given by the linear programming problem
\begin{equation}\label{eq:LP}\tag{LP}
 \LPk{\xi}.
\end{equation}

Clearly, we have $F^p(\xi; \grid) \geq 0$ for every  $\xi\!\in\R^d, \grid\subset
\R^d$, so that it follows from the constraints
\begin{equation}\label{eq:LPconstraints}
	\left[\begin{matrix}
    x_1\cdots x_n\\ 1 \cdots 1
    \end{matrix}\right] \lambda = 
	\left[\begin{matrix}
          \xi\\1
          \end{matrix}
	\right], \quad \lambda \geq 0
\end{equation}
that (\ref{eq:LP}) has a finite solution if and only if
$\xi\in\conv(\grid) $.

\begin{prop} \label{prop:finitenessDQ}$(a)$  Let $p\!\in [1,+\infty)$ and
assume $\spX$ is compact. 
Then $\depn(X)<+\infty$ if and only if $n\ge d+1$.

\medskip 
\noindent $(b)$ Let $p\!\in (1,+\infty)$. It holds
 \[
 	\{\dep(X;\,\cdot\,)<+\infty\}= \{\grid \subset \R^d: \cg \supset  \spX
 	\}.
 \]
 \end{prop}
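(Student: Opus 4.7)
My starting point is two elementary observations about the linear program defining $\Fp(\xi;\grid)$. First, the constraints $\sum_i\lambda_ix_i=\xi$, $\sum_i\lambda_i=1$, $\lambda_i\ge 0$ are feasible iff $\xi\in\cg$. Second, when they are, any feasible $\lambda$ gives $\sum_i\lambda_i\norm{\xi-x_i}^p\le \max_i\norm{\xi-x_i}^p \le (\mathrm{diam}\,\cg)^p$ as soon as $\xi\in\cg$. Combined with the convention $\inf\emptyset=+\infty$, this tells me that $\Fp(\cdot;\grid)$ is uniformly bounded on $\cg$ and equal to $+\infty$ off $\cg$. Hence $\dep(X;\grid)<+\infty$ iff $X\in\cg$ $\Pas$; since $\cg$ is closed for finite $\grid$, this is in turn equivalent to $\spX\subset\cg$. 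That equivalence is precisely statement $(b)$, and it immediately delivers the ``$\Leftarrow$'' direction of $(a)$: when $n\ge d+1$, enclose the compact set $\spX$ in some closed ball $B_{\norm{\cdot}}(0,R)$, enclose that ball in a non-degenerate $d$-simplex with $d+1$ vertices, take those vertices as $\grid$, and conclude $\depn(X)\le\dep(X;\grid)<+\infty$.

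For the converse direction of $(a)$, $n\le d\Rightarrow \depn(X)=+\infty$, I would rely on the standing ``WLOG'' assumption, read as saying that $\spX$ has full affine dimension $d$. Any $\grid$ with $\abs{\grid}\le d$ has $\cg$ lying in an affine subspace of dimension at most $d-1$, so $\spX\not\subset\cg$; closedness of $\cg$ provides an open ball $U$ meeting $\spX$ and disjoint from $\cg$, and by the defining property of the topological support, $\Prob(X\in U)>0$. On $\{X\in U\}$, $\Fp(X;\grid)=+\infty$, so $\dep(X;\grid)=+\infty$, and the infimum over all such grids is $+\infty$.

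I do not expect a deep obstacle; everything reduces to LP feasibility plus the defining property of the support. The one point requiring care is the interpretation of the WLOG hypothesis $\spann(\spX)=\R^d$: linear span equal to $\R^d$ is strictly weaker than full affine dimension (the standard basis $\{e_1,\dots,e_d\}$ linearly spans $\R^d$ but lies in the hyperplane $\sum_i x_i=1$, which only $d$ points can enclose), so the converse of $(a)$ must use the stronger affine reading of ``$X$ is a true $d$-dimensional r.v.''\ that the authors appear to intend.
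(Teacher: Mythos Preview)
Your proposal is correct and follows essentially the same route as the paper: LP feasibility iff $\xi\in\cg$, the diameter bound $\Fp(\xi;\grid)\le(\mathrm{diam}\,\cg)^p$ on $\cg$, enclosing the compact support in a $(d{+}1)$-vertex simplex for the ``$\Leftarrow$'' of $(a)$, and the affine-dimension obstruction plus a positive-measure ball outside $\cg$ for the converse. Your remark about $\spann(\spX)=\R^d$ versus full affine dimension is well-taken; the paper's proof of $(a)$ indeed invokes that $\conv(\spX)$ has nonempty interior (and elsewhere the authors explicitly equate the two), so you have correctly identified which reading of the standing hypothesis is actually needed.
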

 
 \begin{proof}  $(a)$ Let $\xi_0\!\in \spX$ and $R>0$ such that
 ${\rm supp}\,\ProbX\subset B_{\ell^\infty}(\xi_0, \frac R2)$ (closed ball
 w.r.t. the $\ell^{\infty}$-norm). 
Note that  $[-\frac R2 ,\frac R2]^d\subset -\frac R2 \mbox{\bf 1} +R\, \Delta_d $
 where $ \Delta_d  \com{=\{(u^1,\ldots,u^d)\!\in \R^d,\; u^1\le \cdots\le
 u^d\}}$ denotes the canonical simplex. Consequently
\[
\spX \subset \xi_0  -\frac R2 \mbox{\bf 1} +R\, \Delta_d = \conv(\Gamma_0),\;
\Gamma_0=\{\xi_0 - R/2 +Re^j,\; j=0, \ldots,d\}
\]
where $e^0=0$ and $(e^j)_{1\le j\le d}$ denotes the canonical basis of $\R^d$.  Consequently
\[
\forall\, \xi \!\in \spX,\;\Fep(\xi;\Gamma_0)\le \delta(\Gamma_0)
\]
where $\delta(A) :=\sup_{x,y\in A}\norm{x-y}$ denotes the diameter of $A$. More generally, for every grid
$\Gamma$ such that $\spX \subset \cg$,
$\Fep(\xi;\Gamma)<+\infty$ for every $\xi\!\in {\rm supp}\,\ProbX $.

Hence, for every $n\ge |\Gamma_0|=d+1$, 
\[
\depn(X) \le \delta(\Gamma_0).
\] 
 
 If $n\le d$, the convex hull of a grid $\grid$ cannot contain $\spX$: if
 so it contains its convex hull ${\rm conv}({\rm supp} \Prob_X))$ as well which is impossible since it has a
 nonempty interior whereas the dimension of $\cg$ is at most $n-1$-dimensional.
 
 \smallskip  
$(b)$ It follows from what precedes  that $\dep(X; \grid)<+\infty $ if $\cg
\supset \spX$. 
Conversely, if $\cg \not\supset \spX$, there exists $\xi_0 \!\in 
\spX\setminus \cg$. 
Let $\varepsilon_0>0$ such that
$B(\xi_0,\varepsilon_0)\cap \cg=\emptyset$. 
On $B(\xi_0,\varepsilon_0)$,
$\Fep(\,\cdot\,,\grid)\equiv +\infty$ and $\ProbX(B(\xi_0,\varepsilon_0))>0$,
hence $\depn(X;\grid)=+\infty$.
\end{proof}

\subsection{Preliminaries on the local dual quantization functional}\label{sec:preliminaries}

Before we deal in detail with the dual quantization error for random variables,
we have to derive some basic properties for the local dual quantization error
functional $\Fep$. 

To alleviate notations, we  introduce  throughout the paper the abbreviations
\[
	A = \left[ \begin{matrix} x_1 \cdots x_n\\ 1 \cdots 1
	\end{matrix}\right], \quad b = \left[ \begin{matrix} \xi\\ 1
	\end{matrix}\right], \quad c = \left[ \begin{matrix} \norm{\xi - x_1}^p\\ \vdots\\ \norm{\xi -
	x_n}^p \end{matrix}\right]
\]
at least whenever $\grid$ and/or $\xi$ are fixed so that (\ref{eq:LP}) can be written as
\begin{equation*}
\min_{\lambda\in\R^k, \, A\lambda = b,\, \lambda 
\geq 0} \; \lambda^Tc.
\end{equation*}
Moreover, for every set $I\subset \{1, \ldots, n\}$, $A_I =
[a_{ij}]_{j\in I}$ will denote the submatrix of $A$ which columns correspond to the
indices in $I$ and $c_I = [c_i]_{i\in I}$ will denote the subvector of $c$ which
rows are determined by $I$. Finally, $\adim(\grid)$ will denote the dimension of the affine manifold spanned by the grid $\Gamma$ in $\R^d$.

        Since it follows from Proposition \ref{prop:finitenessDQ} that, for any grid
        $\grid=\{x_1,\ldots,x_n\} \subset \R^d$ with $\adim\{\grid\} < d$,  $\dep(X;\grid) =
        +\infty$, we will restrict in the sequel to grids with  $\adim\{\Gamma\} = d$ or equivalently satsifying  ${\rm rk} \left[
\begin{smallmatrix} x_1 & \cdots & x_n\\
          1 & \cdots & 1\\
        \end{smallmatrix}  \right] = d\!+\!1$.  The following proposition is straightforward.

\begin{prop}\label{LP00}(see $e.g.$~\cite{padberg}, p33ff) For every $\xi\in\cg$,  (\ref{eq:LP}) has a solution $\lambda^\ast \in \R^n$, which is 
  an extremal point of the compact set of linear constraints~(\ref{eq:LPconstraints}) so that ${\rm rk}\big(\left[\begin{smallmatrix} x_i \\
          1\end{smallmatrix}  \right],\, i\!\in \{j\,|\, \lambda^*_j>0\}\big)$ are independent. Hence (by the incomplete basis theorem), there exists a  
fundamental {\em basis} $I^{\ast} \subset \{ 1, \ldots, n\}$, such that
$\abs{I^{\ast}} = d+1$, the columns $\left[\begin{smallmatrix} x_j\\ 1 \end{smallmatrix}
\right],\, j\!\in I^{\ast}$ are linearly independent and, after reordering the rows, 
\begin{equation}\label{lambdastar}
	\lambda^\ast = \left[ \begin{matrix}  \lambda_{I^*}\\ 0 \end{matrix}
	\right] \quad \mbox{where} \quad\lambda_{I^*} =A_{I^{\ast}}^{-1} b.
\end{equation}
\end{prop}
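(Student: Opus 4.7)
The plan is to apply the fundamental theorem of linear programming to the feasible polytope defined by (\ref{eq:LPconstraints}). First I would verify that this polytope is nonempty and compact: feasibility follows immediately from $\xi \in \cg$ by writing $\xi$ as a convex combination of the $x_i$; compactness follows because the constraint $\sum_i \lambda_i = 1$ together with $\lambda_i \geq 0$ places the feasible set inside the canonical simplex $\Delta_{n-1}$, which is a compact subset of $\R^n$.

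Since the objective $\lambda \mapsto \lambda^T c$ is linear (with $c$ fixed once $\xi$ and $\grid$ are fixed), its minimum over a nonempty compact convex polytope is attained at an extreme point $\lambda^*$. I would then invoke the standard characterization of extreme points of $\{\lambda : A\lambda = b, \lambda \geq 0\}$: a feasible $\lambda^*$ is an extreme point if and only if the columns of $A$ indexed by $\{j : \lambda^*_j > 0\}$ are linearly independent (the usual argument perturbs $\lambda^*$ along a nontrivial element of the kernel of the active submatrix to contradict extremality in case the columns were linearly dependent).

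To finish, I need to pad the support of $\lambda^*$ to cardinality exactly $d+1$. Since $\adim(\grid) = d$, the matrix $A$ has rank $d+1$, hence admits $d+1$ linearly independent columns. By the incomplete basis theorem, the linearly independent family indexed by $\{j : \lambda^*_j > 0\}$ (which has cardinality at most $d+1$) can be extended to a selection $I^* \subset \{1,\ldots,n\}$ with $|I^*| = d+1$ for which the columns $\bigl[\begin{smallmatrix} x_j\\ 1\end{smallmatrix}\bigr]$, $j\in I^*$, are linearly independent. After reordering so that $I^*$ occupies the first $d+1$ coordinates and appending zeros for the newly added indices, the representation (\ref{lambdastar}) follows from $A_{I^*}\lambda_{I^*} = b$ together with the invertibility of the $(d+1)\times(d+1)$ matrix $A_{I^*}$.

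I do not anticipate a serious obstacle, since all ingredients are classical and already cited via Padberg. The only subtle point is the distinction between extreme points---whose support automatically gives an independent family of columns, possibly of size strictly less than $d+1$---and bases, which always have size exactly $d+1$; the incomplete basis step is precisely what bridges the two and ensures the clean matrix-inverse formula for $\lambda_{I^*}$.
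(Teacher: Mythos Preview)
Your proposal is correct and follows exactly the standard linear programming argument that the paper cites from Padberg rather than proving in detail. The paper gives no proof of its own here, so there is nothing further to compare.
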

(Saying that $I^{\ast}$ is a basis rather than $\left[\begin{smallmatrix} x_i\\ 1 \end{smallmatrix}
\right], \,i\!\in I^{\ast}$,  is a convenient abuse of notation). This means, that the columns of $\lambda^\ast$ corresponding to $I^*$ are given
by $A_{I^{\ast}}^{-1} b$, the remaining ones being equal to $0$.

Consequently, the linear programming problem (\ref{eq:LP}) always admits a solution
$\lambda^\ast$, whose non-zero components correspond to at most $d+1$ affinely independent points
$x_j$ in $\Gamma$, $i.e.$ an optimal triangle in $\R^2$ or a $d$-simplex in $\R^d$.

Since the whole minimization problem can therefore be restricted to such
triangles or $d$-simplices, we introduce the set of basis (or admissible
indices) for a grid $\grid = \{x_1, \ldots, x_n\}\subset \R^d$ as
\[
	\Ig = \bigl\{ I \subset \{1, \ldots, n\}: \abs{I} = d+1 \text{ and } \rk(A_I) =
	d+1	\bigr\}.
\]
Moreover, we denote the optimality region for a basis $I\in\Ig$ by
\[
	D_I(\grid) = \Biggl\{ \xi \in \R^d: \lambda^\ast_I= A_I^{-1} 
	\left[\begin{smallmatrix} \xi\\ 1 \end{smallmatrix} \right] \geq 0 
\text{ and } \sum_{j\in I} \lambda^\ast_j
        \norm{\xi - x_j}^p  = F^p(\xi;\Gamma)
      	\Biggr\}.
\]

A useful reformulation of the above linear programming problem $(LP)$ is
given by its dual version  (see $e.g.$~\cite{padberg}, Theorem~3, p.91).

\begin{prop}[Duality]\label{prop:dualF}
The dual problem of (\ref{eq:LP}) reads
\begin{equation}\tag{DLP}\label{eq:DLP}
\begin{split}
	\LPk{\xi} & = \underset{\text{ s.t. } \left[  \begin{smallmatrix}
          x_1^T & 1  \\
          \vdots & \vdots \\
          x_n^T &  1\\
        \end{smallmatrix}  \right] \left[\begin{smallmatrix} u_
       1 \\ u_2 \end{smallmatrix} \right] \leq \left[\begin{smallmatrix}
         \norm{\xi - x_1}^p \\ \vdots \\\norm{\xi - x_n}^p \\
        \end{smallmatrix} \right]}{\max_{u_1\in\R^{d},u_2\in \R} u_1^T \xi + u_2 }\\
	 & = \max_{u\in\R^d} \min_{1\leq i \leq n} \bigl\{ \norm{\xi - x_i}^p +
	u^T(\xi - x_i) \bigr\}.
\end{split}
\end{equation}
\end{prop}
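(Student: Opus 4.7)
The plan is to obtain both equalities separately: the first by invoking the strong duality theorem for linear programs already cited in the paper, and the second by explicitly optimizing the dual objective over its scalar component.

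For the first equality, I would rewrite (\ref{eq:LP}) in canonical form as $\min\{c^T\lambda : A\lambda = b,\ \lambda \ge 0\}$ using the abbreviations $A$, $b$, $c$ introduced before Proposition~\ref{LP00}. The standard LP dual is $\max\{b^T y : A^T y \le c\}$. Splitting the dual variable as $y = \left[\begin{smallmatrix} u_1 \\ u_2 \end{smallmatrix}\right]$ with $u_1\!\in\R^d$, $u_2\!\in\R$, one reads off $b^T y = u_1^T\xi + u_2$, while the $i$-th row of $A^T y \le c$ becomes $x_i^T u_1 + u_2 \le \norm{\xi - x_i}^p$. This is exactly the form (\ref{eq:DLP}). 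To justify equality of the primal and dual optimal values, I would split on whether $\xi \!\in \cg$. If $\xi \!\in \cg$, the primal is feasible (Proposition~\ref{LP00}) and trivially bounded below by $0$, so strong LP duality (\cite{padberg}, Theorem~3, p.91) gives equality. If $\xi \notin \cg$, the primal is infeasible, hence $F^p(\xi;\Gamma) = +\infty$ by the convention $\inf\emptyset = +\infty$; a separating hyperplane between $\xi$ and $\cg$ provides a dual direction of unbounded ascent, so the dual value is $+\infty$ too, and both sides agree.

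For the second equality, I would fix $u_1 \!\in \R^d$ and maximize the dual objective in $u_2$ alone. The $n$ scalar inequalities $x_i^T u_1 + u_2 \le \norm{\xi - x_i}^p$ are jointly equivalent to
\[
u_2 \le \min_{1\le i\le n}\bigl\{\norm{\xi - x_i}^p - x_i^T u_1\bigr\},
\]
and this upper bound is attained (a minimum of finitely many affine functions of $u_1$). Substituting the optimal $u_2^\ast$ produces the partial maximum
\[
u_1^T \xi + u_2^\ast \;=\; \min_{1\le i \le n}\bigl\{\norm{\xi - x_i}^p + u_1^T(\xi - x_i)\bigr\}.
\]
Taking the supremum over $u_1\!\in \R^d$ and renaming $u_1 = u$ yields the claimed min–max expression.

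The only subtle step is the infeasible case of strong duality; once it is settled, everything else is a bookkeeping exercise in identifying the dual of a standard LP and optimizing an affine function in one scalar variable.
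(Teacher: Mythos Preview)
Your argument is correct. The paper does not give its own proof of this proposition: it simply states the result with the parenthetical citation ``(see $e.g.$~\cite{padberg}, Theorem~3, p.91)'' and moves on. What you have written is precisely the detailed unpacking of that citation --- identifying the primal in standard form $\min\{c^T\lambda : A\lambda=b,\ \lambda\ge 0\}$, reading off the dual $\max\{b^Ty : A^Ty\le c\}$, invoking strong duality when $\xi\in\cg$, and handling the infeasible case $\xi\notin\cg$ separately --- together with the elementary elimination of $u_2$ to obtain the min--max form. There is no alternative route here to compare against; you have simply supplied the proof the paper omitted.

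One small remark: in the infeasible case your separating-hyperplane argument shows the dual value is $+\infty$ as a supremum rather than an attained maximum, so strictly speaking the ``$\max$'' in the statement should be read as ``$\sup$'' when $\xi\notin\cg$. This is a quirk of the statement, not a flaw in your reasoning.
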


An important criterion to check, whether a triangle or a $d$-simplex in $\Gamma$
is optimal, is given by the following characterization of optimality in Linear
Programs (see $e.g.$~\cite{padberg}, Theorem~3 and Remarks~6.4 and 6.5 that follow).

\begin{prop}[Optimality Conditions]\label{prop:LPoptimality}
Let $\Gamma$ be a grid of $\R^d$ with $\adim \Gamma =d$ and let $\xi\!\in {\rm conv}( \Gamma)$.      
        
        \smallskip
        \noindent $(a)$  If a basis $I\in\Ig$ is {\em primal feasible}, $i.e.$
        \[
			\lambda_I= A_I^{-1}  \left[\begin{smallmatrix} \xi\\ 1
        \end{smallmatrix} \right] \geq 0,        	
        \]
        as well as {\em dual feasible}, $i.e.$
        \[
        A^T u \leq  \left[\begin{smallmatrix}
        \norm{\xi - x_1}^p\\ \vdots \\ \norm{\xi - x_n}^p  \end{smallmatrix}
        \right] \quad\text{ for }\quad u = (A_I^T)^{-1} c_I,  
        \]
        then
        \[
        \sum_{j\in I} \lambda_j \norm{\xi - x_j}^p = \left[\begin{smallmatrix} \xi\\ 1
        \end{smallmatrix} \right]^T u.
        \]
        Furthermore $\lambda_I$ and $u$ are optimal for (\ref{eq:LP}) resp.
        (\ref{eq:DLP}) and  $I$ is called optimal basis.
               
        \smallskip
        \noindent $(b$)
        Conversely, if $I\in\Ig$ is an optimal basis, which is additionally
        non-degenerate for (\ref{eq:LP}), $i.e.$ if  there exist $\lambda\in\R^k$
        and $u\in\R^{d+1}$ such that
       $\lambda_I= A_I^{-1}  \left[\begin{smallmatrix} \xi\\ 1 \end{smallmatrix}
       \right] > 0$, $A^T u \leq c$
        and $\displaystyle 
        	 \sum_{j\in I} \lambda_j \norm{\xi - x_j}^p = \left[\begin{smallmatrix} \xi\\ 1
        \end{smallmatrix} \right]^T u$, 
        then it holds
        \[
        	A_I^T u = c_I.
        \]        
\end{prop}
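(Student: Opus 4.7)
The plan is to treat this as the classical strong duality / complementary slackness pair for the LP pair (LP)/(DLP), adapted to the basis-indexed formulation already set up with $A$, $b$, $c$.

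For part $(a)$, I would first check that $\lambda$ (extended by zero outside $I$) is primal feasible and that $u$ is dual feasible --- the first is just the hypothesis $\lambda_I = A_I^{-1}b\ge 0$ combined with $A\lambda = A_I\lambda_I = b$, and the second is literally assumed. The construction $u = (A_I^T)^{-1}c_I$ forces the key identity $A_I^T u = c_I$ on the basis block. Then I compute
\[
\lambda^T c = \lambda_I^T c_I = \lambda_I^T (A_I^T u) = (A_I\lambda_I)^T u = b^T u = \left[\begin{smallmatrix}\xi\\ 1\end{smallmatrix}\right]^T u,
\]
which gives the claimed equality of objectives. Weak duality for the (LP)/(DLP) pair (the primal min always dominates the dual max, via $\lambda^T c \ge \lambda^T A^T u = b^T u$ for any primal-feasible $\lambda$ and dual-feasible $u$) then immediately promotes this equality to joint optimality of $\lambda_I$ and $u$.

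For part $(b)$, I would run the complementary slackness argument in reverse. By hypothesis we are given primal feasibility $A\lambda = b$ with $\lambda \ge 0$ (in fact $\lambda_I > 0$ and $\lambda_j = 0$ for $j \notin I$), dual feasibility $A^T u \le c$, and equality of objectives $\lambda^T c = b^T u$. Since $b = A\lambda$,
\[
0 = \lambda^T c - b^T u = \lambda^T c - \lambda^T A^T u = \lambda^T(c - A^T u).
\]
Both factors are componentwise nonnegative ($\lambda \ge 0$ and $c - A^T u \ge 0$), so each summand $\lambda_j(c_j - (A^T u)_j)$ vanishes. The non-degeneracy assumption $\lambda_j > 0$ for every $j \in I$ then forces $c_j = (A^T u)_j$ for $j \in I$, i.e. $A_I^T u = c_I$, which is the desired conclusion.

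There is no real obstacle; the statement is a direct specialization of standard LP duality, and the only thing one has to be careful about is that the problem has equality constraints $A\lambda = b$ with free sign on the dual variable $u$ (as displayed in (DLP)), so the complementary slackness only survives on the inequality constraint $A^T u \le c$ --- precisely what is exploited in step $(b)$. Citing Padberg's reference for weak duality and for the primal--dual pair keeps the write-up short.
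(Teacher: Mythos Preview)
Your argument is correct and is precisely the standard LP strong duality / complementary slackness computation one would expect here. Note that the paper does not actually give its own proof of this proposition: it merely cites Padberg (Theorem~3 and Remarks~6.4--6.5), so your write-up is not competing against an alternative argument but rather supplying the details that the paper leaves to the reference.
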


Now we may derive the continuity of $F^p$ as a function of $\xi$ on
$\cg$.

\begin{thm}\label{thm:continuityXi} Let $\grid = \{x_1, \ldots, x_n\}\subset \R^d, \,n\in\N$, be a fixed  grid of size $k$. Then 
 the function $f_\grid:\cg\to \R$ defined by  $f_\grid(\xi)= \Fp(\xi; \grid)$ is continuous.
\end{thm}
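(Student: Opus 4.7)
The plan is to use the basis decomposition from Proposition~\ref{LP00} to write $f_\grid$ on $\cg$ as a finite minimum of continuous pieces. For each $I\in\Ig$, set
\[
\phi_I(\xi):=\sum_{j\in I}\bigl(A_I^{-1}[\xi;1]\bigr)_j\norm{\xi-x_j}^p
\qquad\text{and}\qquad
P_I:=\{\xi\in\R^d : A_I^{-1}[\xi;1]\ge 0\}=\conv\{x_j : j\in I\}.
\]
Each $\phi_I$ is continuous on $\R^d$ and each $P_I$ is a closed $d$-simplex. Combining Proposition~\ref{LP00} with the observation that any basis $I$ with $\xi\in P_I$ supplies a primal feasible solution of value $\phi_I(\xi)$ yields
\[
f_\grid(\xi) = \min\{\phi_I(\xi) : I\in\Ig,\ \xi\in P_I\}\qquad\text{for }\xi\in\cg,
\]
so $f_\grid$ is a finite minimum over $\Ig$ of continuous pieces.

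Lower semi-continuity is then immediate: extending each $\phi_I$ by $+\infty$ outside the closed set $P_I$ produces a l.s.c.\ function on $\R^d$, and a finite minimum of l.s.c.\ functions is l.s.c.

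For upper semi-continuity, take $\xi_n\to\xi_0$ in $\cg$ and, for each $n$, pick an optimal basis $I_n\in\Ig$ at $\xi_n$. Since $\Ig$ is finite, some subsequence has $I_n\equiv I$, so $\xi_n\in P_I$, $f_\grid(\xi_n)=\phi_I(\xi_n)\to\phi_I(\xi_0)$, and $\xi_0\in P_I$ by closedness. The bound $\phi_I(\xi_0)\ge f_\grid(\xi_0)$ is free from primal feasibility of $I$ at $\xi_0$; for the reverse inequality I would pass to the dual LP of Proposition~\ref{prop:dualF} and extract a bounded accumulation point $u^\infty$ of dual optimal vectors $u^n$ satisfying $[\xi_n;1]^T u^n=\Fp(\xi_n;\grid)$ and $A^T u^n\le c(\xi_n)$. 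The vector $u^\infty$ is then dual feasible at $\xi_0$ by continuity of $c(\cdot)$, with value $\lim\phi_I(\xi_n)=\phi_I(\xi_0)$, and weak LP duality at $\xi_0$ gives $\phi_I(\xi_0)\le f_\grid(\xi_0)$. Equality then yields $f_\grid(\xi_n)\to f_\grid(\xi_0)$ along this subsequence; since the argument applies to every subsequence, $f_\grid$ is continuous at $\xi_0$.

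The delicate point is the boundedness of $u^n$ and the preservation of dual feasibility in the limit. In the non-degenerate case, Proposition~\ref{prop:LPoptimality}(b) provides the explicit formula $u^n=(A_I^T)^{-1}c_I(\xi_n)$, which converges to $(A_I^T)^{-1}c_I(\xi_0)$ by continuity of $c_I(\cdot)$, and the dual feasibility $A^T u^n\le c(\xi_n)$ passes to $A^T u^\infty\le c(\xi_0)$ by continuity. For degenerate $\xi_n$ one either swaps $I$ for a neighbouring non-degenerate optimal basis via a pivot step and applies the same argument, or exploits compactness of the optimal-dual polyhedra in a small enough neighbourhood of $\xi_0$.
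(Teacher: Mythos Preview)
Your approach is sound and genuinely different from the paper's. For lower semi-continuity, the paper invokes the dual representation $f_\Gamma(\xi)=\sup_{u}\min_i\{\norm{\xi-x_i}^p+u^T(\xi-x_i)\}$ as a supremum of continuous functions, whereas you work on the primal side, writing $f_\Gamma$ as a finite minimum of the l.s.c.\ extensions $\tilde\phi_I$. Both are valid and equally short.

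For upper semi-continuity the two proofs diverge substantially. The paper fixes an optimal basis $I^*$ at the \emph{limit} point $\xi$ and then, by a purely geometric reduction, shows that (up to extraction) each $\xi_n$ can be written as a convex combination of $\xi$ itself and at most $d$ points of $\Gamma$, with the weight on $\xi$ tending to $1$; substituting the optimal representation of $\xi$ via $I^*$ produces a primal feasible $\lambda$ for $\xi_n$ whose cost tends to $f_\Gamma(\xi)$. Your argument instead fixes an optimal basis at each $\xi_n$, extracts a constant $I$, and uses LP duality to certify that $I$ remains optimal at $\xi_0$. Your route is shorter and more systematic (it shows directly that $D_I(\Gamma)$ is closed, which the paper derives only \emph{after} this theorem); the paper's route is self-contained and sidesteps any degeneracy discussion.

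The one soft spot in your argument is the degenerate case, which you flag but resolve only by sketch. The clean fix is to sharpen the \emph{choice} of $I_n$ rather than the analysis of $u^n$: by standard simplex theory (termination with an anti-cycling rule), among the optimal bases at $\xi_n$ there is always one that is simultaneously primal \emph{and} dual feasible. Picking $I_n$ this way, the vector $u^n:=(A_I^T)^{-1}c_I(\xi_n)$ is dual feasible for every $n$ by construction, regardless of whether $\lambda_I^n>0$ or not. Since $A_I$ is fixed and invertible and $c_I(\cdot)$ is continuous, $u^n\to(A_I^T)^{-1}c_I(\xi_0)$ automatically, and the dual feasibility passes to the limit. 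With this choice the appeal to Proposition~\ref{prop:LPoptimality}(b), the pivot step, and the compactness hand-wave all become unnecessary.
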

\begin{proof}
The lower semi-continuity (l.s.c.) of $f_\grid$ follows directly from its dual
representation
\[
f_\grid(\xi) = \sup_{u\in\R^d} \min_{1\leq i \leq n} \bigl\{ \norm{\xi - x_i}^p + u^T(\xi - x_i) \bigr\}
\]
since the supremum of a family of continuous functions is l.s.c.

To establish the upper semi-continuity, we proceed as follows. Let  $\xi,
\xi^n\in\cg$ such that $\xi^n\to\xi$ as $n\to\infty$. Since $\xi,\,\xi^n\in\cg$, we know\com{from Proposition
\ref{prop:simpleUpperBound}} that $f_\grid(\xi)$ and $\displaystyle \limsn f_\grid(\xi^n)$
are upper bounded by $\delta(\Gamma)$ hence finite. Moreover, there is an $I^{\ast}\in\Ig$ such that $(x_i)_{i\in I^*}$ is an affine basis and such that
\[
 f_\grid(\xi) =
\sum_{i\in I^{\ast}} \lambda^{\ast}_i \norm{\xi - x_i}^p  \quad\text{ and }\quad \sum_{i\in I^{\ast}}\lambda^\ast_ix_i =\xi,\;  \sum_{i\in I^{ast}}\lambda^\ast_i =1,\;\lambda^\ast_i\ge 0, \, i\!\in I^{\ast}.
\] 
Up to an extraction, still denoted $(f_\grid(\xi_n))_{n\ge 1}$, one may assume that in fact $f_\grid(\xi_n)\to \limsup_n f_{\Gamma}(\xi_n)$ and that there exists an index subset $I_0\subset\{1,\ldots,n\}$ such that, for every $n\ge 1$, $\xi_n \!\in {\rm conv}(\Gamma_{I_0})$ where $\Gamma_I:=\{x_i, \, i\!\in I\}$. The convex hull being closed, $\xi \!\in {\rm conv}(\Gamma_{I_0})$. Hence there exists $(\lambda^0_i)_{i\in I_0}$ such that 
\[
\xi=\sum_{i\in I_0} \lambda^0_ix_i,\quad \sum_{i\in I_0} \lambda^0_i =1,\; \lambda^0_i\ge 0, \, i\!\in I_0.
\]
Now let $\xi'\!\in {\rm conv}(\Gamma_{I_0})$ $i.e.$ writing $\xi'= \sum_{i\in I_0} \lambda'_ix_i$, $\sum_{i\in I_0} \lambda'_i =1$, $\lambda'_i\ge 0$,  $i\!\in I_0$. Let $i'_0={\rm argmin}\Big\{\frac{\lambda'_i}{\lambda^0_{i}} ,\; \lambda^0_i>0\Big\}$. Then
\begin{eqnarray*}
\xi' &=& \sum_{i\in I_0, i\neq i'_0}\lambda'_ix_i + \frac{\lambda'_{i'_0}}{\lambda^0_{i'_0}} \big(\xi-\sum_{i\in I_0, i\neq i'_0}\lambda^0_ix_i )\big)\\
&=&  \sum_{i\in I_0, i\neq i'_0}\underbrace{\big(\lambda'_i-  \frac{\lambda'_{i'_0}}{\lambda^0_{i'_0}} \lambda^0_i\big)}_{\ge 0} x_i + \frac{\lambda'_{i'_0}}{\lambda^0_{i'_0}}\xi
\end{eqnarray*}

where $ \sum_{i\in I_0, i\neq i'_0}\big(\lambda'_i-  \frac{\lambda'_{i'_0}}{\lambda^0_{i'_0}} \lambda^0_i\big) + \frac{\lambda'_{i'_0}}{\lambda^0_{i'_0}}= (1-\lambda'_{i'_0})-\frac{\lambda'_{i'_0}}{\lambda^0_{i'_0}}(1-\lambda^0_{i'_0})+  \frac{\lambda'_{i'_0}}{\lambda^0_{i'_0}}=1$. Consequently $\xi'\!\in {\rm conv}(\Gamma_{I_0\setminus\{i'_0\}}\cup\{\xi\})$. Now, $I_0$ being finite, it follows that,  up to a new extraction,  one may assume that 
\[
\xi_n \!\in {\rm conv}(\Gamma_{I_0\setminus\{i_0\}}\cup\{\xi\})\quad \mbox{ for an $i_0\!\in I_0$.}
\]

\smallskip \noindent {\em Case~1.} If $\xi\notin {\rm aff}(\Gamma_{I_0\setminus\{i_0\}})$, then $\Gamma_{I_0\setminus\{i_0\}}\cup\{\xi\}$ is affinely free and then $\xi_n$ writes uniquely
\[
\xi_n=\mu^n  \xi+\sum_{i\in I_0\setminus\{i_0\}}\mu^n_ix_i
\]
as a (convex) linear  combination. Since $\xi_n\to \xi$, one has owing to compactness and uniqueness arguments that $\mu^n_i\to 0$ $i\!\in I_0\setminus\{i_0\}$ and $\mu^n \to 1$ as $n\to \infty$. One derives that 
\[
\xi_n = \sum_{i\in I_0\setminus\{i_0\}}\mu^n_ix_i+ \sum_{j\in I^{\ast}}\mu^n\lambda^{\ast}_jx_j
\]
so that 
\[
f_\grid(\xi_n) \le  \sum_{i\in I_0\setminus\{i_0\}}\mu^n_i\|x_i-\xi_n\|^p + \sum_{j\in I^{\ast}}\mu^n\lambda^{\ast}_j\|x_j-\xi_n\|^p
\]
which implies in turn
\[\lim_n f_\grid(\xi_n)\le  \sum_{i\in I_0\setminus\{i_0\}} \,0\,+ 1\times f_\grid(\xi). 
\]

\smallskip \noindent {\em Case~2.}  If $\xi\in {\rm aff}(\Gamma_{I_0\setminus\{i_0\}})$ then $\xi\in \conv(\Gamma_{I_0\setminus\{i_0\}})$ by uniqueness of barycentric coordinates in the affine basis $\Gamma_{I_0\setminus\{i_0\}}$. Then $\xi_n$, $\xi\!\in{\rm conv} (\Gamma\setminus\{i_0\})$ and we can repeat the above procedure to reduce again $I_0\setminus\{i_0\}$ into $I_0\setminus\{i_0,i_1\}$ until $\Gamma\setminus\{i_0,i_1,\ldots,i_p\}$ becomes affinely free. If so the same reasoning as above completes the proof. If it never occurs, this means that $\xi_n=\xi$ for every $n\ge 1$ which trivially solves the problem. 
\end{proof} 

We can now state the main result about the optimality regions $D_I(\grid)$.

\begin{prop}\label{prop:DI} (a) For every $I \in \Ig$,  $ \{ x_j: j \in I\}\, \subset\, D_I(\grid) \subset\, \conv\{ x_j: j \in
I\}$,  $D_I(\grid)$ is closed and therefore a Borel set.

\smallskip
\noindent (b) The family  $\bigl(D_I(\grid)\bigr)_{I \in \Ig}$ makes up a Borel measurable covering of $\cg$.
\end{prop}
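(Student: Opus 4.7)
The plan is to read both parts off the definition of $D_I(\grid)$ together with the LP structure from Proposition~\ref{LP00}.

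For part~(a), I would first verify that $x_j\in D_I(\grid)$ for each $j\in I$. Since $I\in\Ig$, the matrix $A_I$ is an invertible $(d+1)\times(d+1)$ matrix, and the unique vector $\mu$ with $A_I\mu = \bigl[\begin{smallmatrix}x_j\\1\end{smallmatrix}\bigr]$ is the standard unit vector $e_j \ge 0$; the associated objective value is $\norm{x_j-x_j}^p=0$, which, since $\Fp(\cdot;\grid)\ge 0$, must also equal $\Fp(x_j;\grid)$. The inclusion $D_I(\grid)\subset\conv\{x_j:j\in I\}$ follows directly from reading $A_I\lambda^{\ast}_I=\bigl[\begin{smallmatrix}\xi\\1\end{smallmatrix}\bigr]$ together with $\lambda^{\ast}_I\ge 0$: this is exactly the statement that $\xi=\sum_{j\in I}\lambda^{\ast}_j x_j$ is a convex combination of the $(x_j)_{j\in I}$. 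For closedness, I would observe that $\xi\mapsto A_I^{-1}\bigl[\begin{smallmatrix}\xi\\1\end{smallmatrix}\bigr]$ is affine and continuous, so the condition $\lambda^{\ast}_I\ge 0$ carves out a closed polyhedron of $\R^d$, while the equation $\sum_{j\in I}\lambda^{\ast}_j(\xi)\norm{\xi-x_j}^p=\Fp(\xi;\grid)$ is the equality of two continuous functions of $\xi$---the continuity of $\Fp(\cdot;\grid)$ being exactly Theorem~\ref{thm:continuityXi} (note that the first condition already forces $\xi\in\conv\{x_j:j\in I\}\subset\cg$, on which $\Fp(\cdot;\grid)$ is finite and continuous). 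The intersection of these two closed sets is therefore closed, hence Borel.

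For part~(b), I would apply Proposition~\ref{LP00} pointwise. Fix $\xi\in\cg$; then the feasible set of~(\ref{eq:LP}) is compact and nonempty, so an extremal optimal solution $\lambda^{\ast}$ exists. Proposition~\ref{LP00} furnishes a fundamental basis $I^{\ast}\subset\{1,\ldots,n\}$ with $|I^{\ast}|=d+1$ and linearly independent columns---hence $I^{\ast}\in\Ig$---on which $\lambda^{\ast}$ is supported and satisfies $\lambda^{\ast}_{I^{\ast}}=A_{I^{\ast}}^{-1}\bigl[\begin{smallmatrix}\xi\\1\end{smallmatrix}\bigr]\ge 0$. Optimality of $\lambda^{\ast}$ gives $\sum_{j\in I^{\ast}}\lambda^{\ast}_j\norm{\xi-x_j}^p=\Fp(\xi;\grid)$, so $\xi\in D_{I^{\ast}}(\grid)$. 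Borel measurability of the covering is then immediate: $\Ig$ is finite and each $D_I(\grid)$ is closed by part~(a).

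I do not anticipate a serious obstacle here: the statement is essentially a structural repackaging of the fact that~(\ref{eq:LP}) always admits an extremal optimizer supported on an affinely independent subgrid of size $d+1$. The one nontrivial ingredient imported from earlier in the section is the continuity of $\Fp(\cdot;\grid)$ from Theorem~\ref{thm:continuityXi}, which is what turns the second defining condition of $D_I(\grid)$ into a closed condition.
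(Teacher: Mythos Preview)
Your proof is correct and follows essentially the same approach as the paper: both establish the inclusions directly from the definition (setting $\xi=x_j$, $\lambda_j=1$ for the first, reading the constraints for the second), both obtain closedness from the continuity of $\Fp(\cdot;\grid)$ in Theorem~\ref{thm:continuityXi}, and both derive the covering in~(b) by invoking Proposition~\ref{LP00} pointwise. Your version is slightly more explicit about why the first defining condition already confines $\xi$ to $\conv\{x_j:j\in I\}\subset\cg$ before invoking continuity of $\Fp$, which is a nice touch.
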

\begin{proof} $(a)$ 
The first inclusion is obvious (set $\xi=x_j$, $\lambda_j=1$) and the second one follows directly from the definition of $D_I(\grid)$. To recognize that $D_I(\grid)$ is closed, note that, owing to Theorem~1,
the  mappings $\xi \mapsto \sum_{j\in I} \lambda^\ast_j
        \norm{\xi - x_j}^p$ and $\xi\mapsto F^p(\xi;\grid)$  are continuous. 

\smallskip
\noindent $(b)$ Since (\ref{eq:LP}) has a solution for every $\xi \in
\cg$, we   derive from Proposition~\ref{LP00} that
$\displaystyle \bigcup_{I\in\Ig} D_I(\grid) =
	\cg$. \end{proof}

\subsection{Intrinsic stationarity}

To establish the link between the above definition  of dual
quantization and stationary quantization rules, we have to precise the notion of
intrinsic stationarity.

\begin{definition}\label{Def:3} $(a)$ 
Let $\Gamma \subset \R^d$ be  a finite subset of $\R^d$ and let $(\Omega_0, \mathcal{S}_0, \Prob_0) $ be a probability space. 
Any random  operator $\JG: (\Omega_0 \times D,  \mathcal{S}_0\otimes 
\mathcal{B}or(D)) \to \Gamma$,  $\cg \subset D \subset \R^d$ is called a {\em
splitting operator} (onto $\Gamma$).

\smallskip
\noindent A splitting operator on $\Gamma$ satisfying
\[
 \qquad \forall \xi \in \cg, \quad \E_{\Prob_0} \bigl(\JG(.,\xi) \bigr) = \int_{\Omega_0} \JG(\omega_0, \xi ) \,
\Prob_0(d\omega_0) = \xi
\]
is called an {\em intrinsic stationary splitting operator}.
\end{definition} 
We will see in the next paragraph that $(\Omega_0, \mathcal{S}_0, \Prob_0)$
can be modelled as an exogenous probability space in order to
randomly ``split" ($e.g.$ by simulation) a r.v. $X$, defined on the probability
space of interest $(\Omega,\mathcal{S}, \Prob)$, between the points in $\grid$.

This new stationarity property is in fact equivalent to the dual stationarity property
(\ref{eq:sec2DualStat}) on the product space $\PSpace$ as emphasized by the following easy propositon.

\begin{prop}
Let $\cg \subset D \subset \R^d$. A random splitting operator $\JG: (\Omega_0 \times D,  \mathcal{S}_0\otimes 
\Bb(D)) \to \Gamma$ is intrinsic stationary, if and only if, for
any r.v. $Y: (\Omega, \mathcal{S},
\Prob) \to (\R^d, \mathcal{B}^d)$ 
satisfying $\supp(\Prob_Y) \subset \cg$, 
\begin{equation}\label{StatsurOmega}
\E_{\Prob_0\otimes\Prob}\bigl(\JG(Y)|Y\bigr) = Y \qquad \PPas
\end{equation}
where $\JG$ and $Y$ are canonically extended onto $\Omega_0\times \Omega$ by
setting $\JG((\omega_0,\omega),.)= \JG(\omega_0,.)$ and $Y(\omega_0,\omega)= Y(\omega)$.
\end{prop}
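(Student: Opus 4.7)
The plan is to prove the two implications separately, with the reverse direction being essentially trivial (via constant random variables) and the forward direction relying on a Fubini/tower argument.

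For the ``if'' direction, fix any $\xi\in\cg$ and take $Y\equiv \xi$ to be the constant r.v.\ on $(\Omega,\Ss,\Prob)$; clearly $\supp(\Prob_Y)=\{\xi\}\subset\cg$. Then $\sigma(Y)$ is $\Prob$-trivial, so $\E_{\Prob_0\otimes\Prob}(\JG(Y)|Y)=\E_{\Prob_0\otimes\Prob}(\JG(\cdot,\xi))=\E_{\Prob_0}\JG(\cdot,\xi)$ by Fubini, and the assumption yields $\E_{\Prob_0}\JG(\cdot,\xi)=\xi$. Since $\xi\in\cg$ is arbitrary, $\JG$ is intrinsic stationary.

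For the ``only if'' direction, suppose $\JG$ is intrinsic stationary and let $Y:(\Omega,\Ss,\Prob)\to(\R^d,\Bb^d)$ satisfy $\supp(\Prob_Y)\subset\cg$. After the canonical extension, $Y(\omega_0,\omega)=Y(\omega)$ depends only on the second coordinate, while $\JG(Y)(\omega_0,\omega)=\JG(\omega_0,Y(\omega))$. To check the stationarity identity~(\ref{StatsurOmega}), one must verify that for every bounded Borel $\varphi:\R^d\to\R$,
\[
\E_{\Prob_0\otimes\Prob}\!\bigl(\varphi(Y)\,\JG(Y)\bigr)=\E_{\Prob_0\otimes\Prob}\!\bigl(\varphi(Y)\,Y\bigr).
\]
By Fubini's theorem, the left-hand side equals
\[
\int_{\Omega}\varphi\!\bigl(Y(\omega)\bigr)\Bigl(\int_{\Omega_0}\JG\!\bigl(\omega_0,Y(\omega)\bigr)\,\Prob_0(d\omega_0)\Bigr)\Prob(d\omega),
\]
and since $Y(\omega)\in\cg$ for $\Prob$-a.e.\ $\omega$ (as $\supp(\Prob_Y)\subset\cg$), intrinsic stationarity replaces the inner integral by $Y(\omega)$, yielding exactly the right-hand side. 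Combined with the fact that $Y$ is $\sigma(Y)$-measurable and integrable (it suffices to treat it componentwise, using truncation if $Y$ is only measurable rather than integrable, since the conditional-expectation identity is pointwise on $\sigma(Y)$), this establishes~(\ref{StatsurOmega}).

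The only subtle point to watch is the measurability/integrability side of the Fubini application: $\JG$ is jointly measurable by Definition~\ref{Def:3}, and $Y$ takes values in $\cg$ which is compact, hence $\JG(\omega_0,Y(\omega))\in\Gamma$ is bounded, so all integrals are well defined. Everything else is bookkeeping.
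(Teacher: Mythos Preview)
Your proof is correct and follows exactly the same approach as the paper: Fubini's theorem for the forward implication and the constant r.v.\ $Y\equiv\xi$ for the reverse. The paper's proof is a two-line sketch (``The direct implication follows directly from Fubini's theorem and Definition~\ref{Def:3}. For the reverse one simply set $Y\equiv\xi$.''), and you have simply filled in the details carefully; your remark on truncation is unnecessary since $\supp(\Prob_Y)\subset\cg$ forces $Y$ to be bounded, but this does no harm.
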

\begin{proof}
The direct implication follows directly from Fubini's theorem and Definition~\ref{Def:3}. For the reverse
one simply set $Y \equiv \xi$.
\end{proof}

\subsubsection{Dual quantization operator $\DQO$ and its interpolation counterpart $\mathbb{J}^*_{\Gamma}$}

A way to define such an intrinsic stationary random splitting operator in an optimal
manner is provided  by the dual quantization operator $\DQO$.

Therefore, let $\grid = \{x_1, \ldots, x_n\} \subset \R^d,\, k\in\N$ and assume
that $\adim(\grid) = d$. 
Otherwise the dual quantization operator is not defined.

We then may choose a Borel partition $(C_I(\grid))_{I\in\Ig}$ of
$\conv(\grid) $ such that, for every $I\in\Ig$,
\[
	C_I(\grid) \subset D_I(\grid) = \Bigl\{ \xi \in \R^d: \lIs:= A_I^{-1} 
	\left[\begin{smallmatrix} \xi\\ 1 \end{smallmatrix} \right] \geq 0 
 \text{ and } \sum_{j\in I} \lambda^\ast_j
         \norm{\xi - x_j}^p  = F^p(\xi;\grid) \Bigr\}
\]
with the notations of~(\ref{lambdastar}). As a consequence, up to   a reordering of rows, the Borel function
\begin{equation}\label{eq:deflambdaI}
	\lambda^I(\xi) = \left[ \begin{matrix} A_I^{-1} \left[
	\begin{smallmatrix} \xi\\ 1 \end{smallmatrix}\right]\\ 0 \end{matrix}\right]
\end{equation}
gives an optimal solution to $F^p(\xi;\grid)$ for every $\xi\in C_I$.

\smallskip
Now we are in position to define  the dual quantization
operator.

\begin{definition}[Dual quantization operator] Let $(\Omega_0, \Ss_0, \Prob_0) = \bigl([0,1], \Bb([0,1]),
\lambda^1\bigr)$ and let $U = \Id_{[0,1]}$  be the canonical random variable with $\Unif$ distribution over the unit interval. The {\it dual quantization operator} $\DQO: \Omega_0 \times \cg \to \grid$  is then defined for every $(\omega_0,\xi)\!\in \Omega_0\times \R^d$ by
\begin{equation}\label{eq:defDQO}
	\DQO(\omega_0, \xi) =  \sum_{I\in\Ig} \Biggl[ \sum_{i=1}^{n} x_i \cdot
	\ind{\bigl\{\sum\limits_{j=1}^{i-1} \lambda^I_j(\xi)\,\leq\, U
 <	\sum\limits_{j=1}^{i} \lambda^I_j(\xi) \bigr\}} (\omega_0)\Biggr]
	\ind{C_I(\grid)}(\xi).
\end{equation}
\end{definition}
The dual quantization
operator is clearly an intrinsic stationary splitting operator. First 
\[
\forall\, I\!\in {\cal I}(\Gamma),\; \forall\, i\in I, \quad \EA\Big(\ind{\bigl\{\sum\limits_{j=1}^{i-1} \lambda^I_j(\xi)\,\leq\, U
 <	\sum\limits_{j=1}^{i} \lambda^I_j(\xi) \bigr\}}\Big) = \lambda_i^I(\xi). 
 \]
On the other  hand 
\[
\forall \xi\in C_I(\grid), \quad	\sum_{i=1}^{n} \lambda_i^I(\xi)\, x_i = \xi,  
\]
so that  $\DQO$ shares the intrinsic stationarity property:
\[
\forall \xi\in\conv(\grid) ,\qquad	\EA \bigl( \DQO(\xi) \bigr) = \sum_{I\in\Ig} \Biggl[ \sum_{i=1}^{n}
	\lambda_i^I(\xi)\, x_i \Biggr] \ind{C_I(\grid)}(\xi)
	= \xi.
\]

\noindent{\bf Remark.} The $\Bb([0,1])\otimes \Bb(\conv(\grid))$-measurability  of the dual quantization operator  is an easy consequence  of the facts that  $C_I(\Gamma)$ are Borel sets and $\xi\mapsto \lambda^I(\xi)$ as defined by~(\ref{eq:deflambdaI}) is a continuous, hence Borel, function.

\bigskip
On the other hand, one easily checks that this construction also yields
\begin{equation}\label{eq:equivFJg}
\forall\, \xi\!\in\conv(\grid) ,\qquad	\EA\norm{\xi - \DQO(\xi)}^p = \sum_{i=1}^n \lambda^I_i(\xi)\|x_i-\xi\|^p=  F^p(\xi;\grid).
\end{equation}

\vskip -0,75 cm 
\begin{definition}[Companion interpolation operator] The companion interpolation 
operator $\mathbb{J}^*_{\Gamma}$ is defined from ${\cal F}(\conv(\Gamma),\R)=\{f:\conv(\Gamma)\to \R\}$ into itself by
\begin{equation}\label{Interpolform}
\mathbb{J}^*_{\Gamma}(F) = \E_{\Prob_0}\Big(F\big({\cal J}^*_{\Gamma}(\omega_0,.)\big)\Big)=\sum_{I\in\Ig}\left[ \sum_{i\in I}  \lambda^I_i\ F(x_i)  \right]
	\ind{C_I(\grid)}
\end{equation}
 \end{definition}
 This operator  $\mathbb{J}^*_{\Gamma}$ maps continuous functions into piecewise linear continuous functions and one clearly has
 \[
\mathbb{J}^*_{\Gamma}(F) (X) = \E\big(F( \DQO(X))\,|\, X\big)
 \] 
so that  $\E \big(\mathbb{J}^*_{\Gamma}(F) (X)\big) = \E\big( F(\DQO(X))\big)$.

\bigskip
{\sc Change of notation.} From now on, we switch to the product space $\PSpace$. (However,
if no ambiguity, we will still use the symbols $\Prob$ and $\E$ to denote the probability and the
expectation on  this product space.) Doing so, we  may assume that the
intrinsic stationary splitting operator is independent of  any ``endogenous" r.v.
 defined on $(\Omega,\mathcal{S}, \Prob)$, canonically extended to
$\PSpace$ (which implies that the stationary property~(\ref{StatsurOmega}) holds).

\subsubsection{Characterizations of the optimal dual quantization error}
We use this operator to  prove the analogous theorem for dual
quantization to Proposition~\ref{prop:regularQNN}.

\begin{thm}\label{thm:DQLinkStat}
Let $X:(\Omega,{\cal S},\Prob)\to \R^d$ be a r.v., let $p\!\in [1,\infty)$ and let $n\in\N$. Then
\begin{equation*}
\begin{split}
d_{n,p}(X) & = \inf\bigl\{ \E \norm{X -  \JG(X)}_p: \,
\JG: \Omega_0\times \R^d\to\Gamma, \, \Gamma\subset \R^d, \text{ intrinsic stationary},\\
& \qquad\qquad\qquad\qquad\qquad\qquad\supp(\ProbX) \subset\cg,\,
\abs{\Gamma} \leq n \bigr\}\\
& = \inf\bigl\{ \E \norm{X -  \widehat Y}_p: \widehat Y :\PSpace\to \R^d, \\
& \qquad\qquad\qquad\qquad\qquad\qquad \abs{\widehat
Y(\Omega_0\times\Omega)} \leq n,\, \E_{\Prob\otimes\Prob_0}(\widehat Y|X) = X\;\Prob\otimes\Prob_0\mbox{-}a.s.  \bigr\}\le +\infty.
\end{split}
\end{equation*}
These quantities are finite iff $X\in L^\infty(\Omega, \mathcal{S},\Prob)$ and
$n \geq d+1$.
\end{thm}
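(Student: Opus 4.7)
The plan is to prove the two equalities cyclically, writing $A := d_{n,p}^p(X)$, $B := \inf\bigl\{\E\norm{X-\JG(X)}^p\bigr\}$ and $C := \inf\bigl\{\E\norm{X-\widehat Y}^p\bigr\}$, where the infima range over the respective admissible classes of the theorem, and to establish the cycle $A \ge B \ge C \ge A$; finiteness is addressed at the end.

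For $A\ge B$: for every admissible grid $\Gamma$ (with $|\Gamma|\le n$ and $\supp(\ProbX)\subset\conv(\Gamma)$), the dual quantization operator $\DQO$ from~(\ref{eq:defDQO}) is an intrinsic stationary splitting operator, and~(\ref{eq:equivFJg}) combined with Fubini yields $\E\norm{X-\DQO(X)}^p = \E F^p(X;\Gamma) = d_p^p(X;\Gamma)$; taking the infimum over $\Gamma$ proves $A\ge B$. For $B\ge C$: any intrinsic stationary $\JG$ onto a grid $\Gamma$ produces $\widehat Y := \JG(\cdot,X)$ on $\PSpace$ with $|\widehat Y(\Omega_0\times\Omega)| \le |\Gamma| \le n$, and the product-space independence of $\JG$ from the $\mathcal{S}$-component, together with intrinsic stationarity, yields $\E(\widehat Y\mid X) = \E_{\Prob_0}\JG(\cdot,\xi)\bigr|_{\xi=X} = X$ a.s.\ on the product space, so $\widehat Y$ is a competitor for $C$.

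The main step, and in my view the only real content, is $C\ge A$. Given a competitor $\widehat Y$ for $C$ taking values in $\{x_1,\ldots,x_k\}$, $k\le n$, set $\Gamma := \{x_1,\ldots,x_k\}$ and let $\lambda_i(\omega) := \Prob(\widehat Y = x_i \mid X)(\omega)$ via the regular conditional distribution (which exists since $\R^d$ is Polish). Then $\lambda_i\ge 0$, $\sum_i \lambda_i = 1$, and the hypothesis $\E(\widehat Y\mid X) = X$ translates into $\sum_i \lambda_i(\omega) x_i = X(\omega)$ a.s.; in particular $\supp(\ProbX)\subset\conv(\Gamma)$, and $(\lambda_i(\omega))_i$ is feasible for the linear program defining $F^p(X(\omega);\Gamma)$ in Definition~\ref{def:dualQ}. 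Hence $F^p(X(\omega);\Gamma) \le \sum_i \lambda_i(\omega)\norm{X(\omega)-x_i}^p = \E\bigl(\norm{X-\widehat Y}^p \bigm| X\bigr)(\omega)$ almost surely, and integrating gives $A \le d_p^p(X;\Gamma) \le \E\norm{X-\widehat Y}^p$, closing the cycle. The key observation is that the regular conditional law of $\widehat Y$ given $X$ is itself a random barycentric decomposition of $X$ over $\Gamma$, which is exactly the object the LP defining $F^p$ minimizes over.

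Finally, for finiteness: if $n\ge d+1$ and $X\in L^\infty$, Proposition~\ref{prop:finitenessDQ}(a) exhibits a $(d+1)$-point simplex $\Gamma_0$ with $d_p(X;\Gamma_0)\le \delta(\Gamma_0)<\infty$, hence $A<\infty$. Conversely, if $A<\infty$, some grid $\Gamma$ achieves $d_p(X;\Gamma)<\infty$, and the convention $\inf\emptyset = +\infty$ forces $\supp(\ProbX)\subset\conv(\Gamma)$ (as in the proof of Proposition~\ref{prop:finitenessDQ}(b)); compactness of $\conv(\Gamma)$ then implies $X\in L^\infty$, while $n\ge d+1$ is necessary because a grid of at most $d$ points has convex hull of empty interior in $\R^d$ and so cannot contain $\supp(\ProbX)$, which spans $\R^d$ under the standing assumption $\spann(\spX) = \R^d$.
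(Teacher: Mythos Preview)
Your proof is correct and follows essentially the same approach as the paper: the cycle $A\ge B\ge C\ge A$ using the dual quantization operator $\DQO$ for $A\ge B$, the induced $\widehat Y=\JG(\cdot,X)$ for $B\ge C$, and the regular conditional law $\lambda_i=\Prob(\widehat Y=x_i\mid X)$ as a feasible point in the LP for $C\ge A$. Your presentation is slightly tidier than the paper's (you use a generic $\JG$ rather than specifically $\DQO$ in the $B\ge C$ step, and you make explicit that $\supp(\ProbX)\subset\conv(\Gamma)$ in the $C\ge A$ step), but the substance is identical.
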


\begin{proof}
First we show the inequality
\begin{equation}\label{eq:proofLinkStatIneq1}
\begin{split}
\dqpn(X) \geq \inf\bigl\{ \E \norm{X -  \JG(X)}^p: &\, \JG:\R^d\to\Gamma
\text{ is intrinsic stationary},\\
& \supp(\ProbX) \subset\cg,\, \Gamma\subset \R^d,\,\abs{\Gamma}
\leq n \bigr\}.
\end{split} 
\end{equation}

We may assume that $\dqpn(X) < +\infty$ which implies the existence of a grid
$\Gamma\in\R^d$ with $\abs{\grid} \leq n$ and $\dqp(X;\grid) < +\infty$ so that
Proposition \ref{prop:finitenessDQ} implies $\supp(\ProbX)
\subset \conv(\grid) $.

Hence, we choose a Borel partition $(C_I(\grid))_{I\in\Ig}$ of
$\conv(\grid) $ with $C_I(\grid) \subset D_I(\grid), \, I \in \Ig$, so that
the dual quantization operator $\DQO$ is well defined by (\ref{eq:defDQO}) on $\conv(\grid)$. Let us still denote $\DQO$ its  Borel extension by $0$ outside $\cg$. 

Owing to
the independence of $X$ and $\DQO$ on $\Omega_0 \times \Omega$, it holds
\[
 \E\bigl( \norm{\xi - \DQO(\xi)}^p \bigr)_{|\xi=X}  
  =  \E\bigl( \norm{X - \DQO(X)}^p\,|\,X \bigr) \qquad \as,
\]
so that we conclude from (\ref{eq:equivFJg})
\begin{equation*}
\begin{split}
  \E\,F^p(X;\grid) & = \E \bigl[ \E\bigl( F^p(X;\grid) \,|\,X \bigr)  \bigr]
  = \E \bigl[ \E\bigl( F^p(\xi;\grid) \bigr)_{|\xi=X}  \bigr] \\
  & = \E \bigl[ \E\bigl( \norm{\xi - \DQO(\xi)}^p \bigr)_{|\xi=X}  \bigr] 
  = \E \bigl[ \E\bigl( \norm{X - \DQO(X)}^p\,|\,X \bigr)  \bigr] \\
  & = \E \norm{X - \DQO(X)}^p.
\end{split}
\end{equation*}
Since $\DQO$ is intrinsic stationary by construction, the first inequality
(\ref{eq:proofLinkStatIneq1}) holds.

The second inequality
\begin{equation*}
\begin{split}
\inf\bigl\{ & \E \norm{X -  \JG(X)}^p: \, \JG
\text{ is intrinsic stationary},\,
 \supp(\ProbX) \subset\cg,\, \abs{\Gamma}
\leq n \bigr\}\\
& \geq \,\inf\bigl\{ \E \norm{X -  \widehat Y}^p:  \widehat Y \text{ is a r.v.}, 
  \abs{\widehat Y(\Omega_0\times\Omega)} \leq n,\, \E(\widehat Y|X) = X \bigr\}
\end{split}
\end{equation*}
follows directly from setting $\widehat Y = \DQO(X)$ in the case $\DQO$ exists
and $\supp(\ProbX) \subset\cg$. Otherwise, there is nothing to show.

To  prove the reverse inequality, 
let us consider a r.v. $\widehat Y$ on $\Omega_0\times\Omega$   s.t. $\abs{\widehat
Y(\Omega_0\times\Omega)} \leq n$ and 
\[ 
\E(\widehat Y\,|\,X) = X \quad \as
\]
Such r.v. do exist owing to what precedes. Let  $\widehat Y(\Omega_0\times\Omega) = \{ y_1, \ldots, y_k\}$ with 
$k\leq n$ and let
\[
	\lambda_i =\Big(\xi\mapsto \Prob_0\otimes\Prob(\widehat Y = y_i \,|\, X=\xi)\Big)\circ X,\quad 1\leq i \leq k,
\]
where  the above mapping denotes   a regular  versions of the conditional expectation on $\R^d$ (so  that  $\lambda_i$ is $\Ss_0\otimes\Ss$-measurable), $i=1,\ldots,k$.

Hence, there exists a null set $N\in\Ss_0\otimes\Ss$ such that
\[
	\forall \bar\omega = (\omega_0, \omega)\in N^c,\quad \begin{cases}
                           \sum\limits_{i=1}^k y_i\, \lambda_i(\bar\omega) =
                           \E(\widehat Y|X)(\bar\omega) = X(\omega)\\
                           \sum\limits_{i=1}^k \lambda_i(\bar\omega) = 1\\
                           \lambda_i(\bar\omega) \in [0,1], \; 1 \leq i \leq k.
                           \end{cases}
\]

Setting $\grid = \{ y_1, \ldots, y_k\}$, we get for every $\bar\omega\in N^c$
\begin{equation*}
\begin{split}
  \E\bigl( \norm{X - \widehat Y}^p\, |\, X \bigr)(\bar\omega) & =  
	\sum_{i=1}^k \lambda_i(\bar\omega)\, \E\bigl( \norm{X - y_i}^p\, |\, X
	\bigr)(\bar\omega) 
	= \sum_{i=1}^k \lambda_i(\bar\omega)\, \norm{X(\omega) - y_i}^p\\
	& \geq F^p(X(\omega);\grid).
\end{split}
\end{equation*}
Taking the expectation completes the proof.
\end{proof}

\noindent {\bf Remark.} We necessarily need to define $\widehat Y$  on the larger product probability
space $\PSpace$ rather than only on $(\Omega, \Ss, \Prob)$, since $\Ss$ might
not be fine enough to contain appropriated r.v.s $\widehat Y$ satisfying
$\E(\widehat Y|X) = X$. E.g., if $\Ss = \sigma(X)$,   $\widehat Y$ would be
$\sigma(X)$-measurable so that $\E(\widehat
Y|X) = \widehat Y$, intrinsic stationarity would  become unreachable  for 
general finite-valued r.v. $\widehat Y$.

\subsubsection{Applications of intrinsic stationarity to cubature formulas}
As a consequence of the above Theorem~\ref{thm:DQLinkStat} we get the following theorem about
cubature by dual quantization. 

\smallskip First, one must keep in mind as concerns functional approximation interpretation and numerical integration that  $ \E\big( \DQO(X\big))=\E \big(\mathbb{J}^*_{\Gamma}(F) (X)\big) $ and that the second expression based on the interpolation formula~(\ref{Interpolform}) may be more intuitive although, once the  weights 
$$
p_i= \Prob(\DQO(X) = x_i),\; i=1,\ldots,n,
$$ 
have been computed ``off line" the cubature formula is of course more efficient in its aggregated form corresponding to $ \E\big( \DQO(X\big))$. 
It is straightforward that if $F:\R^d\to \R$ is $\alpha$-H\"older  continuous on $\conv(\grid)$, then (with obvious notations),  if $\conv(\grid) \supset\supp(\ProbX)$, 
\[
\abs{\E\,F(X) - \E \big(\mathbb{J}^*_{\Gamma}(F) (X)\big)}=\abs{\E\,F(X) - \E\,F(\DQO(X))} \leq \LipConst{F}\; \E \norm{X -
	\DQO(X)}.
\]
One may go further like with Voronoi quantization when $F$ is smoother, taking advantage of the stationarity property(satisfied here by any grid).

\begin{prop}\label{CubatC1}
Let $X:(\Omega,{\cal S})\to \R^d$ be a r.v.  with a compactly supported distribution $\ProbX$. Let  $\grid = \{x_1, \ldots, x_n\}\subset
\R^d$ be a grid  with $\conv(\grid) \supset\supp(\ProbX)$. Then for every function  $F:\R^d\to \R$, differentiable  in the neighbourhood of $\conv(\grid)$, with Lipschitz continuous partial derivatives on  $\conv(\grid)$, it holds for the cubature formula $\E\,F(\DQO(X)) = \sum_{i=1}^n p_i\cdot F(x_i)$
\[
	\abs{\E\,F(X) - \E \big(\mathbb{J}^*_{\Gamma}(F) (X)\big)}=\abs{\E\,F(X) - \E\,F(\DQO(X))} \leq \LipConst{F'}\; \E \norm{X -
	\DQO(X)}^2.
\]
\end{prop}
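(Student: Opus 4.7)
The plan is to mirror the proof of Proposition~\ref{prop:regSecOrd}, but with the roles of $X$ and the quantizer swapped, exploiting the dual/intrinsic stationarity $\E(\DQO(X)\mid X) = X$ rather than the Voronoi stationarity $\E(X\mid\widehat{X}^\Gamma) = \widehat{X}^\Gamma$. This inversion is precisely what makes the bound hold \emph{for every} grid $\Gamma$ with $\conv(\Gamma) \supset \supp(\Prob_X)$, not just for optimal ones.

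First I would perform a first-order Taylor expansion of $F$ around $X$ rather than around $\DQO(X)$. Since $F$ is differentiable in a neighbourhood of the compact convex set $\conv(\Gamma)$ with Lipschitz continuous partial derivatives on it, the standard integral remainder formula yields, for every $\bar\omega=(\omega_0,\omega)\in\Omega_0\times\Omega$,
\[
\bigabs{F\big(\DQO(X)\big) - F(X) - F'(X)\big(\DQO(X)-X\big)} \leq \LipConst{F'}\,\bignorm{X - \DQO(X)}^2,
\]
both quantities inside the absolute value being well-defined as $\DQO(X)\!\in\Gamma \subset\conv(\Gamma)$ almost surely.

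Next I would take the conditional expectation given $X$ on the product space $\PSpace$. Since we have switched to the product space, $X$ and the exogenous randomness of $\DQO$ are independent, and the dual intrinsic stationarity (Theorem~\ref{thm:DQLinkStat} combined with the construction of $\DQO$) gives
\[
\E\bigl(\DQO(X) \mid X\bigr) = X \qquad\Prob\otimes\Prob_0\text{-a.s.}
\]
Consequently, since $F'(X)$ is $\sigma(X)$-measurable,
\[
\E\bigl(F'(X)(\DQO(X)-X)\mid X\bigr) = F'(X)\,\E\bigl(\DQO(X)-X \mid X\bigr) = 0.
\]
Applying Jensen's inequality to the conditional expectation of the Taylor inequality and then taking full expectation removes the linear term and yields the claimed bound; the equality $\E F(\DQO(X)) = \E(\mathbb{J}^{\ast}_\Gamma(F)(X))$ was already noted after the definition of $\mathbb{J}^{\ast}_\Gamma$.

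There is no genuine obstacle here: the only point requiring care is the justification that the stationarity identity is used in its proper conditional form on the product space (so that the $F'(X)$ factor, which depends only on the $\Omega$-coordinate, can be pulled out), and that $F'$ is well-defined on all trajectories of $\DQO(X)$, which is guaranteed by the hypothesis that $F$ is $C^1$ in a neighbourhood of $\conv(\Gamma)$ together with $\DQO(X)\!\in\Gamma\subset\conv(\Gamma)$. The proof is thus essentially a one-line Taylor argument once these measurability and independence considerations are in place.
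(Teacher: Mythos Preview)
Your proof is correct and follows exactly the paper's approach: a second-order Taylor expansion of $F$ around $X$, followed by the intrinsic stationarity $\E(\DQO(X)-X\mid X)=0$ to kill the linear term. The paper's proof is a one-line version of what you wrote.
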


\vskip -0.75 cm 
\begin{proof} The result follows straightforwardly from taking the expectation in the Taylor expansion of $F$ at $X$ at the second order, namely
\[
|F(\DQO(X))-F(X)-F'(X). (\DQO(X)-X)|\le \LipConst{F'}\norm{X -\DQO(X)}^2,
\]
and applying  the stationarity property $\E(\DQO(X)-X\,|\, X)=0$.
\end{proof}

Now assume that the integrand $F$ is a convex function. If $\widehat X^\grid$ is a Voronoi quantization which satisfies the regular
stationarity property $\E(X|\widehat X^\grid) =  \widehat X^\grid$,
it follows from Jensen's inequality that $\E\,F(\widehat X^\grid)$ yields
a lower bound for the approximation of $\E\,F(X)$.

\smallskip By  contrast to that and exploiting the intrinsic stationarity of $\DQO$, a
cubature formula based on $\DQO$ yields for convex  functions $F$ an upper bound, which is
now valid for any grid $\grid \subset \R^d$.
\begin{prop} Let $X$ and $\Gamma$ be like in Proposition~\ref{CubatC1}. Assume that $F:\conv(\grid) \to \R$ is convex. Then  $\mathbb{J}^*_{\Gamma}(F)$ defines a convex function on $\conv(\grid)$ satisfying $\mathbb{J}^*_{\Gamma}(F)\ge F$. In particular
\[
	\E \big(\mathbb{J}^*_{\Gamma}(F) (X)\big)\geq \E\,F(X).
\]
\end{prop}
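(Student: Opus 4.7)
The plan is to establish the two conclusions --- the pointwise domination $\mathbb{J}^*_{\Gamma}(F) \geq F$ and the convexity of $\mathbb{J}^*_{\Gamma}(F)$ on $\cg$ --- and then integrate the first to obtain the stated ``in particular'' consequence. The pointwise step is a direct application of Jensen's inequality on each cell of the partition, while the convexity step will require exploiting the geometric structure of the partition $(C_I(\grid))_{I\in\Ig}$ coming from the local LP.

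For the pointwise inequality, I would fix $\xi \in \cg$ and pick $I\in\Ig$ with $\xi\in C_I(\grid)$. Since $C_I(\grid)\subset D_I(\grid)$, the coefficients $\lambda^I(\xi)$ defined by~(\ref{eq:deflambdaI}) satisfy $\lambda^I_i(\xi)\geq 0$, $\sum_{i\in I}\lambda^I_i(\xi)=1$ and $\sum_{i\in I}\lambda^I_i(\xi)\,x_i = \xi$. Applying the convexity of $F$ to this convex combination gives
\[
F(\xi) = F\Big(\sum_{i\in I}\lambda^I_i(\xi)\, x_i\Big) \leq \sum_{i\in I}\lambda^I_i(\xi)\,F(x_i) = \mathbb{J}^*_{\Gamma}(F)(\xi).
\]
Since $\supp(\ProbX)\subset\cg$ by hypothesis, integrating this pointwise bound against the distribution of $X$ immediately yields the displayed cubature inequality $\E F(X) \leq \E\big(\mathbb{J}^*_{\Gamma}(F)(X)\big)$.

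For the convexity of $\mathbb{J}^*_{\Gamma}(F)$, my plan is to combine the piecewise-affine structure with the LP duality of Proposition~\ref{prop:dualF}. On each cell $C_I(\grid)$ the function $\mathbb{J}^*_{\Gamma}(F)$ coincides with the affine map $\xi\mapsto\sum_{i\in I}\lambda^I_i(\xi)\,F(x_i)$, since $\lambda^I(\xi)=A_I^{-1}\left[\begin{smallmatrix}\xi\\ 1\end{smallmatrix}\right]$ is affine in $\xi$; moreover, continuity across the common facet of two adjacent cells $C_I$ and $C_J$ holds because only the indices in $I\cap J$ retain positive weight at boundary points, so the two affine pieces agree on the facet. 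The key idea is then to rewrite $\mathbb{J}^*_{\Gamma}(F)(\xi)$ as a maximum of affine functions in $\xi$ --- indexed by the admissible bases in $\Ig$ --- so that convexity follows automatically, and to check via the optimality conditions of Proposition~\ref{prop:LPoptimality} that the maximum is attained on $C_I(\grid)$ by the piece coming from $I$ itself.

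The main obstacle is precisely this last step: extracting a globally valid max-of-affine representation of $\mathbb{J}^*_{\Gamma}(F)$. The pointwise bound and the resulting cubature inequality are immediate from Jensen, but promoting the continuous piecewise-affine function to a convex one requires tying the partition $(C_I(\grid))_{I\in\Ig}$ to a regular (coherent) subdivision of $\cg$ produced by a suitable lifting of the grid points. In the quadratic Euclidean case such a lifting is supplied by the Delaunay structure discussed in Section~\ref{sec:Delaunay}, so $\mathbb{J}^*_{\Gamma}(F)$ is the projection of the upper envelope of the corresponding lifted affine pieces. For general $p$, one would need to recover the analogous lifting directly from the dual optimal variables of~(\ref{eq:DLP}), and this is where the genuine technical content of the proof lies.
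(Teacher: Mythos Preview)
Your treatment of the pointwise inequality $\mathbb{J}^*_{\Gamma}(F)\ge F$ and of the integrated consequence is exactly the paper's argument: Jensen's inequality applied to the barycentric representation $\xi=\sum_{i\in I}\lambda^I_i(\xi)\,x_i$, then integration against $\ProbX$.

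For the convexity of $\mathbb{J}^*_{\Gamma}(F)$ the paper is considerably terser than you are. Its entire argument is the single sentence: ``Its convexity is a consequence of its affinity on each $d$-simplex $C_I(\Gamma)$, and its coincidence with $F$ on $\Gamma$.'' In other words, the paper does \emph{not} go through LP duality or a max-of-affine representation; it simply records that $\mathbb{J}^*_{\Gamma}(F)$ is piecewise affine over the family of optimal simplices and interpolates $F$ at the grid points, and treats this as sufficient. Your instinct that this step hides something is well placed: piecewise affinity plus interpolation of a convex function at the vertices does \emph{not} force global convexity for an arbitrary simplicial subdivision --- one needs the subdivision to be regular (coherent), i.e.\ induced by a lifting. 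So the genuine content you are trying to supply through the dual variables of~(\ref{eq:DLP}) is precisely what would turn the paper's one-line sketch into a complete argument. In the quadratic Euclidean case this is handled by the Delaunay structure of Section~\ref{sec:Delaunay}; for general $p$ and norm the paper does not spell it out, and your proposed route via the dual solution $u$ (which, via Proposition~\ref{prop:LPoptimality}, attaches to each optimal simplex an affine support functional) is the natural way to close the gap.

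In short: your proof of the inequality is complete and matches the paper; your proof of convexity is a more honest account of the same claim, identifying a subtlety that the paper's sketch passes over.
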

\noindent {\em Proof.}
The inequality $\mathbb{J}^*_{\Gamma}(F)\ge F$ follows from  the very definition~(\ref{Interpolform})  of $\mathbb{J}^*_{\Gamma}$. Its convexity is a consequence of its affinity on each $d$-simplex $C_{I}(\Gamma)$, and its coincidence with $F$ on $\Gamma$.~$\Box$

\bigskip {\sc Application to convex order.} Dual quantization preserves the convex order on $\conv(\grid)$: if $X$ and $Y$ are two r.v.  $a.s.$ taking values in $\conv(\grid)$ such that $X\preceq_c Y$ --~$i.e.$ for every convex function $\varphi:\conv(\grid)\to \R$, $\E\,\varphi(X)\le \E\,\varphi(Y)$~-- then ${\cal J}_{\Gamma}^*(X)\preceq_c{\cal J}^*_{\Gamma}(y)$.


\subsection{Upper bounds and product quantization}

\begin{prop}[Scalar bound]\label{prop:scalarBoundF}
Let $\grid = \{x_1, \ldots, x_n\}\subset  \R$ with $x_1< \ldots< x_n$.
Then
\[
\forall \xi \in [x_1, x_n], \qquad \Fp(\xi, \Gamma) \leq \max_{1\leq i \leq n-1}
\Bigl(\frac{x_{i+1}-x_i}{2}\Bigr)^p.
\]
\end{prop}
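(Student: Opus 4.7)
The plan is to exhibit, for each $\xi \in [x_1, x_n]$, an explicit feasible point in the linear program~(\ref{eq:LP}) defining $F^p(\xi,\Gamma)$ and to evaluate the objective there. Since the $x_i$'s are ordered and $\xi \in [x_1,x_n]$, I would first pick an index $i \in \{1,\ldots,n-1\}$ such that $\xi \in [x_i, x_{i+1}]$, and set $s_i = x_{i+1} - x_i$, $a = \xi - x_i \ge 0$, $b = x_{i+1} - \xi \ge 0$, so that $a + b = s_i$. The natural candidate weights are $\lambda_i = b/s_i$ and $\lambda_{i+1} = a/s_i$, with all other $\lambda_j = 0$; these are nonnegative, sum to one, and produce barycenter $\xi$, hence are feasible for the LP.

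Second, since $F^p(\xi,\Gamma)$ is defined as the infimum of the LP objective over the feasible set, evaluating at this particular $\lambda$ yields the pointwise bound
\[
F^p(\xi,\Gamma) \;\le\; \frac{b}{s_i}\,a^p + \frac{a}{s_i}\,b^p \;=\; \frac{ab\bigl(a^{p-1}+b^{p-1}\bigr)}{s_i}.
\]
It then remains to establish the purely elementary inequality $\tfrac{ab\,(a^{p-1}+b^{p-1})}{s_i} \le (s_i/2)^p$ for all $a,b \ge 0$ with $a+b = s_i$. My plan for this is to combine two standard estimates: AM--GM gives $ab \le (s_i/2)^2$, while concavity of $t \mapsto t^{p-1}$ (Jensen's inequality, valid when $p-1 \in [0,1]$) gives $a^{p-1}+b^{p-1} \le 2(s_i/2)^{p-1}$. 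Multiplying these and dividing by $s_i$ produces exactly $(s_i/2)^p$. Once this pointwise bound is proved, the conclusion follows since $(s_i/2)^p \le \max_{1\le j \le n-1}(s_j/2)^p$ trivially.

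The main obstacle lies in the final elementary inequality. The concavity/AM--GM route above is clean for $p \in [1,2]$; for larger $p$ the concavity-based bound on $a^{p-1}+b^{p-1}$ degrades, and I would have to analyze the univariate function $\varphi(a) := a(s_i-a)\bigl[a^{p-1}+(s_i-a)^{p-1}\bigr]/s_i$ on $[0,s_i]$ directly. Exploiting the symmetry $\varphi(a) = \varphi(s_i - a)$ and differentiating --- ideally after the substitution $u = 2a/s_i - 1 \in [-1,1]$ --- one locates the critical points and checks that the symmetric interior point $a = b = s_i/2$ is the relevant global maximizer, where the equality $\varphi(s_i/2) = (s_i/2)^p$ is attained, which finishes the proof.
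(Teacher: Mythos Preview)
Your feasible-point construction and the resulting bound $\varphi(a) = \frac{ab(a^{p-1}+b^{p-1})}{s_i}$ match the paper's proof exactly; the paper then simply asserts that this expression ``attains its maximum at $\xi = \frac{x_i+x_{i+1}}{2}$'', which is precisely your claim that $\varphi$ is maximized at $a = b = s_i/2$. Your AM--GM/concavity argument is a clean way to justify this for $p \in [1,2]$, going beyond what the paper writes.

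The gap is in the last step for large $p$: the midpoint is \emph{not} the global maximizer of $\varphi$ when $p > 3$, so the direct analysis you propose would not confirm your expectation. Writing $g(t) = t(1-t)\bigl[t^{p-1}+(1-t)^{p-1}\bigr]$ so that $\varphi(a) = s_i^{\,p}\, g(a/s_i)$, one computes $g''(1/2) = 2p(p-3)(1/2)^{p-1}$; hence for $p > 3$ the midpoint is a local \emph{minimum} of $g$. Concretely, take $\Gamma = \{0,2\}$, $p = 4$, $\xi = 1/2$: the only feasible weights are $\lambda = (3/4,1/4)$, giving $F^4(1/2,\Gamma) = \tfrac34(\tfrac12)^4 + \tfrac14(\tfrac32)^4 = \tfrac{21}{16} > 1 = \bigl(\tfrac{s_1}{2}\bigr)^4$. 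So the inequality in the proposition itself fails for $p > 3$, and the paper's proof carries the same unjustified assertion. For $p \in (2,3]$ your proposed calculus argument does go through (at $p=3$ one has $g(t) = u - 2u^2$ with $u = t(1-t) \in [0,1/4]$, which is increasing on that interval), so the statement is correct precisely on the range $p \in [1,3]$.
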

\begin{proof}
If $\xi\in \Gamma$, then $\Fp(\xi, \Gamma) = 0$ and
the assertion holds. 
Suppose now $\xi \in (x_i, x_{i+1})$. 
Then $\xi = \lambda
x_i + (1- \lambda) x_{i+1}$ and $\lambda = \frac{x_{i+1} - \xi}{x_{i+1} - x_i}$,
so that
\[
	\Fp(\xi, \grid) \leq \Bigl(\frac{x_{i+1} - \xi}{x_{i+1} -
	x_i}\Bigr)\abs{\xi - x_i}^p + \Bigl(\frac{\xi - x_i}{x_{i+1} -
	x_i}\Bigr)\abs{\xi - x_{i+1}}^p
\]
attains its maximum at $\xi = \frac{x_i + x_{i+1}}{2}$.
This implies
\[
	\Fp(\xi, \grid) \leq \Bigl(  \frac{1}{2} + \frac{1}{2} \Bigr) 
\Bigabs{\frac{x_{i+1} - x_i}{2}}^p
\]
which yields the assertion.
\end{proof}

\begin{prop}[Local product Quantization]\label{prop:product}
Let $\norm{\cdot} = \pnorm{\,\cdot\,}$ be the canonical $p$-norm on $\R^d$, $\xi =
(\xi_1, \ldots, \xi_d)\in \R^d$ and $\Gamma = \prod_{j=1}^d \alpha_j$ for some
finite subsets $\alpha_j \subset \R$. Then
\[
	\Fp(\xi; \Gamma) = \sum_{j=1}^d \Fp(\xi_j; \alpha_j). 
\]
\end{prop}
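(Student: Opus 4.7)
The plan is to prove the two inequalities $F^p(\xi;\Gamma)\le\sum_j F^p(\xi_j;\alpha_j)$ and $F^p(\xi;\Gamma)\ge\sum_j F^p(\xi_j;\alpha_j)$ by exploiting the fact that the cost function separates coordinate-wise for the $\ell^p$-norm, since
\[
\norm{\xi-x}^p=\pnorm{\xi-x}^p=\sum_{j=1}^d |\xi_j-x_j|^p.
\]
Throughout I assume $\xi\in\conv(\Gamma)=\prod_{j=1}^d\conv(\alpha_j)$ (otherwise both sides are $+\infty$ and the identity is trivial); note also that a point $\xi\!\in\R^d$ lies in $\conv(\Gamma)$ iff each $\xi_j\in\conv(\alpha_j)$.

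For the upper bound, pick for each $j=1,\ldots,d$ an optimal convex combination $(\mu^{(j)}_y)_{y\in\alpha_j}$ realizing $F^p(\xi_j;\alpha_j)$, i.e.\ $\mu^{(j)}_y\ge0$, $\sum_{y\in\alpha_j}\mu^{(j)}_y=1$, $\sum_{y\in\alpha_j}\mu^{(j)}_y\, y=\xi_j$ and $\sum_{y\in\alpha_j}\mu^{(j)}_y|\xi_j-y|^p=F^p(\xi_j;\alpha_j)$. Define the product weights on $\Gamma$ by $\lambda_x=\prod_{j=1}^d\mu^{(j)}_{x_j}$ for $x=(x_1,\ldots,x_d)\in\Gamma$. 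A direct check on the marginals shows that $(\lambda_x)_{x\in\Gamma}$ is feasible for \eqref{eq:LP} at $\xi$, and by separability of the cost
\[
\sum_{x\in\Gamma}\lambda_x\pnorm{\xi-x}^p=\sum_{j=1}^d\sum_{y\in\alpha_j}\mu^{(j)}_y|\xi_j-y|^p=\sum_{j=1}^d F^p(\xi_j;\alpha_j),
\]
yielding $F^p(\xi;\Gamma)\le\sum_j F^p(\xi_j;\alpha_j)$.

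For the lower bound, start from any feasible family $(\lambda_x)_{x\in\Gamma}$ in the LP defining $F^p(\xi;\Gamma)$ and form its $j$-th marginal $\mu^{(j)}_y:=\sum_{x\in\Gamma,\;x_j=y}\lambda_x$ for $y\in\alpha_j$. Then $\mu^{(j)}\ge0$, $\sum_y\mu^{(j)}_y=1$ and $\sum_y\mu^{(j)}_y\, y=\xi_j$, so $\mu^{(j)}$ is feasible for the LP defining $F^p(\xi_j;\alpha_j)$. Again by separability,
\[
\sum_{x\in\Gamma}\lambda_x\pnorm{\xi-x}^p=\sum_{j=1}^d\sum_{y\in\alpha_j}\mu^{(j)}_y|\xi_j-y|^p\ge\sum_{j=1}^d F^p(\xi_j;\alpha_j).
\]
Taking the infimum over feasible $\lambda$ gives the reverse inequality, and equality follows.

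There is no real obstacle here: once the coordinatewise splitting of $\pnorm{\xi-x}^p$ is observed, the only things to verify are that the product of feasible marginals is feasible on $\Gamma$ and that marginalization of a feasible measure on $\Gamma$ produces feasible one-dimensional measures. Both are routine manipulations of sums over the product set, so the proof reduces to a one-paragraph direct computation of two matching bounds.
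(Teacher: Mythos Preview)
Your proof is correct and self-contained, but it follows a genuinely different route from the paper. The paper proves the identity via the dual representation of Proposition~\ref{prop:dualF}: since $\Gamma$ is a Cartesian product and the $\ell^p$ cost splits coordinatewise, the inner minimum in
\[
F^p(\xi;\Gamma)=\max_{u\in\R^d}\min_{x\in\Gamma}\Bigl\{\sum_{j=1}^d\bigl(|\xi_j-x_j|^p+u_j(\xi_j-x_j)\bigr)\Bigr\}
\]
factors into $\sum_{j=1}^d\min_{y\in\alpha_j}\{|\xi_j-y|^p+u_j(\xi_j-y)\}$, and then the outer $\max_{u\in\R^d}$ also decouples, giving the result in one stroke. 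Your argument stays entirely on the primal side, building a feasible product weight for the upper bound and marginalizing an arbitrary feasible weight for the lower bound. The dual approach is shorter once the duality proposition is in hand; your primal approach is more elementary (no LP duality needed) and makes the probabilistic structure---independence of the splitting across coordinates---completely transparent.
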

\begin{proof}
Denoting $\alpha_j = \{a_1^j, \ldots, a_{n_j}^j\},\,
\Gamma = \{x_1, \ldots, x_n\}$ and due to the fact that $\{x_1, \ldots, x_n\}$ is made up by the cartesian
product of $\{a_1^j, \ldots, a_{n_j}^j\},\, j = 1, \ldots, d$ we have for any $u, \xi
\in \R^d$:
\[
\min_{1\leq i \leq n} \biggl\{ \sum_{j=1}^d \abs{\xi_j - x_i^j}^p + u_j(\xi_j -
x_i^j) \biggr\} = \sum_{j=1}^d \min_{1\leq i \leq n_j}  \bigl\{ \abs{\xi_j -
a_i^j}^p + u_j(\xi_j - a_i^j) \bigr\}.
\]

We then get from Proposition \ref{prop:dualF}
\begin{eqnarray*}
\begin{split}
\Fp(\xi; \Gamma) & = \max_{u\in\R^d} \min_{1\leq i \leq n} \biggl\{ \sum_{j=1}^d
\abs{\xi_j - x_i^j}^p + u_j(\xi_j - x_i^j) \biggr\} \\
& = \max_{u\in\R^d} \sum_{j=1}^d \min_{1\leq i \leq n_j}  \bigl\{ \abs{\xi_j -
a_i^j}^p + u_j(\xi_j - a_i^j) \bigr\}\\
& = \sum_{j=1}^d \max_{u_j\in\R} \min_{1\leq i \leq n_j}  \bigl\{ \abs{\xi_j -
a_i^j}^p + u_j(\xi_j - a_i^j) \bigr\} = \sum_{j=1}^d \Fp(\xi_j; \alpha_j)
\end{split}
\end{eqnarray*}
which completes the proof.\end{proof}

This enables us to derive a first upper bound for the asymptotics of the optimal
dual quantization error of distributions with bounded support when the size of
the grid tends to infinity.

\begin{prop}[Product Quantization]\label{prop:productConstruction}
Let $C=a+\ell[0,1]^d$, $a=(a_1,\ldots,a_d)\!\in \R^d$, $\ell>0$, be a hypercube, parallel to the
coordinate axis with common edge length $l$. Let $\Gamma$ be the product quantizer  of size $(m+1)^d$ defined by
$\displaystyle \Gamma=\prod_{j=1}^d\Big\{a_j+\frac{i\ell}{m},\, i=0,\ldots,m\Big \}$.

Then it holds
\begin{eqnarray}\label{eq:Fpprod}
  \forall \xi\in C,\qquad F_p^p(\xi; \Gamma)  \leq d\cdot C_{p,\norm{\cdot}} \cdot
  \Bigl(\frac{l}{2} \Bigr)^p \cdot m^{-p}
\end{eqnarray}
where   $C_{p,\norm{\cdot}}= \sup_{|x|_p=1}\|x\|^p > 0$. Moreover, for any compactly supported
r.v. $X$
\[
\depn(X) = \mathcal{O}(n^{-1/d}).
\]
\end{prop}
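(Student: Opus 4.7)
The plan is to prove the pointwise bound (\ref{eq:Fpprod}) first, then obtain the global rate by inscribing the support of $X$ in a hypercube and applying the bound to a suitable product grid.

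For the pointwise bound, I would first reduce to the canonical $p$-norm. Since $C_{p,\norm{\cdot}} = \sup_{\pnorm{x}=1} \norm{x}^p$, homogeneity gives $\norm{y}^p \le C_{p,\norm{\cdot}}\, \pnorm{y}^p$ for every $y \in \R^d$. Applying this to every $y = \xi - x_i$, $x_i \in \Gamma$, inside the definition of $F^p$ yields
\[
F^p_{\norm{\cdot}}(\xi; \Gamma) \;\le\; C_{p,\norm{\cdot}}\, F^p_{\pnorm{\,\cdot\,}}(\xi; \Gamma).
\]
Now, since $\Gamma = \prod_{j=1}^d \alpha_j$ with $\alpha_j = \{a_j + i\ell/m,\, i=0,\ldots,m\}$, Proposition~\ref{prop:product} (the local product formula for the canonical $p$-norm) gives
\[
F^p_{\pnorm{\,\cdot\,}}(\xi; \Gamma) \;=\; \sum_{j=1}^d F^p(\xi_j; \alpha_j).
\]
Each $\alpha_j$ is a one-dimensional uniform grid on $[a_j, a_j+\ell]$ with mesh $\ell/m$, so the scalar bound of Proposition~\ref{prop:scalarBoundF} gives $F^p(\xi_j; \alpha_j) \le (\ell/(2m))^p$ for every $\xi_j \in [a_j, a_j+\ell]$. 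Summing the $d$ coordinates yields $F^p_{\pnorm{\,\cdot\,}}(\xi; \Gamma) \le d\,(\ell/(2m))^p$, and combining with the norm comparison gives exactly~(\ref{eq:Fpprod}).

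For the asymptotic rate, since $X$ is compactly supported, I can pick a hypercube $C = a + \ell[0,1]^d$ containing $\supp(\Prob_X)$ for some $\ell < +\infty$. For an arbitrary $n \ge 2^d$, choose $m = \lfloor n^{1/d}\rfloor - 1 \ge 1$ so that $(m+1)^d \le n$, and let $\Gamma$ be the product grid of cardinality $(m+1)^d$ from the statement. Since $\Gamma$ has affine dimension $d$ and $\supp(\Prob_X)\subset C = \conv(\Gamma)$, Definition~\ref{def:dualQ} and the pointwise bound yield
\[
\dqpn(X) \;\le\; \E\, F^p(X;\Gamma) \;\le\; d\cdot C_{p,\norm{\cdot}} \cdot (\ell/2)^p \cdot m^{-p}.
\]
Taking $p$-th roots and using $m \ge n^{1/d} - 2 \sim n^{1/d}$ gives $\depn(X) = \mathcal{O}(n^{-1/d})$.

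The only delicate points are essentially bookkeeping: matching the general norm $\norm{\cdot}$ to the $p$-norm via $C_{p,\norm{\cdot}}$ so that Proposition~\ref{prop:product} becomes applicable (it is stated for $\pnorm{\,\cdot\,}$ only), and ensuring that $n$ is large enough so that $m\ge 1$, which is harmless in an asymptotic statement. No genuinely difficult step arises.
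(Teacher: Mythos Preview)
Your proof is correct and follows essentially the same approach as the paper: the pointwise bound is obtained by combining Propositions~\ref{prop:scalarBoundF} and~\ref{prop:product}, and the rate by inscribing $\supp(\Prob_X)$ in a hypercube and taking $m=\lfloor n^{1/d}\rfloor-1$. You are in fact more explicit than the paper about the norm-reduction step that introduces the constant $C_{p,\norm{\cdot}}$ (needed because Proposition~\ref{prop:product} is stated only for $\pnorm{\,\cdot\,}$), which the paper leaves implicit.
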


\begin{proof}
The first claim follows directly from Propositions~\ref{prop:scalarBoundF} and~\ref{prop:product}.
For the second assertion let $n\geq 2^d$ and set $m = \lfloor n^{1/d} \rfloor -
1$. If we choose the hypercube $C$ such that $\supp(\ProbX) \subset C$ we arrive owing to~(\ref{eq:Fpprod}) at
\[
\dqpn(X) \leq C_1 \Bigl( \frac{1}{\lfloor n^{1/d} \rfloor-1} \Bigr)^{p} \leq
C_2 \Bigl( \frac{1}{n} \Bigr)^{p/d}
\] 
for some constants $C_1, C_2 > 0$, which yields the desired upper bound.
\end{proof}

\subsection{Extension for distributions with unbounded support}

We have seen in the previous sections, that $F^p(\xi;\grid)$ is finite if and
only $\xi\in\conv(\grid) $, so that intrinsic stationarity cannot hold for a
r.v. $X$ with unbounded support.

Nevertheless, we may restrict the stationarity requirement in the definition
of the dual quantization error for unbounded $X$ to its ``natural domain''
$\cg$, which means that   from  now on we will drop the constraint $\supp(\ProbX)
\subset \cg$ in Theorem \ref{thm:DQLinkStat}.

\begin{definition} The random splitting operator $\J^*_{\Gamma}$ is caninically extended to the whole $\R^d$ by setting 
\[
\forall\, \omega_0\!\in \Omega_0,\; \forall\, \xi\notin \conv (\Gamma),\quad \J^*_{\Gamma}(\omega_0, \xi)=\pi_{\Gamma}(\xi)
\]
where $\pi_\Gamma$ denotes a Borel nearest neighbour projection on $\Gamma$. Subsequently we define the extended $L^p$-mean dual quantization error as
\[
	\dqbpn(X) = \inf\bigl\{ \E \norm{X -  \JG(X)}^p: \, \JG:\Omega_0\times \R^d\to\Gamma
\text{ is intrinsic stationary}, \Gamma \subset \R^d, \abs{\Gamma} \leq n
\bigr\}.
\]
\end{definition}
{\bf Remark.}  When dealing with  Euclidean norm,  a (continuous,) alternative is to set $\J^*(\omega_0, \xi)= \J^*(\omega_0, {\rm Proj}_{\conv(\Gamma)}(\xi))$ but, although looking more natural from a geometrical point of view, it provides no numerical improvement for applications and induces  additional technicalities (especially  for the existence of optimal quantizers and  the counterpart  of Zador's theorem).
 
\bigskip Combining Proposition~\ref{prop:regularQNN} and Theorem~\ref{thm:DQLinkStat} and keeping in mind that outside $\cg$, $\|\xi-\J_{\Gamma}(\xi)\|\ge {\rm dist}(\xi,\Gamma)$, 
 we get the following proposition.
\begin{prop}
Let $X\in L^p_{\R^d}(\Prob)$. Then $\dqbpn(X) =\inf\bigl\{ \E\,\Fbp(X;\Gamma): \Gamma \subset \R^d, \abs{\Gamma} \leq
n \bigr\}$  where 
\[
	\Fbp(\xi;\Gamma) = \Fp(\xi; \Gamma) \ind{\cg}(\xi) +
	\norm{\xi - \pi_\Gamma(\xi)}^p \ind{\cg^c}(\xi)
\]
\end{prop}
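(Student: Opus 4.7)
The plan is to prove both inequalities between $\dqbpn(X)$ and the infimum on the right-hand side, separately on $\cg$ and on $\cg^c$, by reducing everything to the ``local'' optimality statement already built into the definition of $\Fp$.

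\textbf{Upper bound.} First I would fix an arbitrary grid $\Gamma\subset\R^d$ with $|\Gamma|\le n$ and use the extension of the dual quantization operator $\DQO$ provided by the definition above: on $\cg$ it equals the splitting operator constructed in Section~2.3.1, and on $\cg^c$ it agrees with the nearest-neighbour projection $\pi_\Gamma$. This extension is automatically intrinsic stationary (the stationarity requirement bears only on $\xi\in\cg$), and by independence of $X$ and the exogenous randomness $\omega_0$ together with the identity (\ref{eq:equivFJg}), one obtains
\[
\E\norm{X-\DQO(X)}^p=\E\bigl[F^p(X;\Gamma)\ind{\cg}(X)\bigr]+\E\bigl[\norm{X-\pi_\Gamma(X)}^p\ind{\cg^c}(X)\bigr]=\E\,\Fbp(X;\Gamma).
\]
Taking the infimum over $\Gamma$ yields $\dqbpn(X)\le\inf_\Gamma\E\,\Fbp(X;\Gamma)$.

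\textbf{Lower bound.} Conversely, let $\JG:\Omega_0\times\R^d\to\Gamma$ be any intrinsic stationary splitting operator with $|\Gamma|\le n$. I would condition on $X$: since $\JG$ is independent of $X$ on the product space $\PSpace$, Fubini gives
\[
\E\norm{X-\JG(X)}^p=\int_{\R^d}\EA\norm{\xi-\JG(\cdot,\xi)}^p\,\ProbX(d\xi).
\]
For $\xi\in\cg$, the weights $\lambda_i(\xi):=\Prob_0(\JG(\cdot,\xi)=x_i)$ satisfy $\lambda_i(\xi)\ge0$, $\sum_i\lambda_i(\xi)=1$ and $\sum_i\lambda_i(\xi)x_i=\xi$ by intrinsic stationarity at $\xi$; hence
\[
\EA\norm{\xi-\JG(\cdot,\xi)}^p=\sum_{i=1}^{|\Gamma|}\lambda_i(\xi)\norm{\xi-x_i}^p\;\ge\;\Fp(\xi;\Gamma),
\]
by the very definition of $\Fp$ (this is the content already used in the lower-bound part of Theorem~\ref{thm:DQLinkStat}). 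For $\xi\notin\cg$, there is no stationarity constraint, but $\JG(\cdot,\xi)\in\Gamma$, so
\[
\EA\norm{\xi-\JG(\cdot,\xi)}^p\ge\dist(\xi,\Gamma)^p=\norm{\xi-\pi_\Gamma(\xi)}^p.
\]
Summing the two contributions recovers $\E\,\Fbp(X;\Gamma)$, and taking the infimum on the left completes the inequality $\dqbpn(X)\ge\inf_\Gamma\E\,\Fbp(X;\Gamma)$.

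\textbf{Main obstacle.} There is essentially no serious obstacle here: the proof is a direct ``splitting'' of the expectation according to whether $X$ falls inside or outside $\cg$. The only point requiring care is the lower bound on $\cg$, where one must be explicit that the regular conditional distribution of $\JG(\cdot,\xi)$ is a probability measure on $\Gamma$ whose barycenter is $\xi$, so that its $p$-th moment dominates the linear-programming value $\Fp(\xi;\Gamma)$; the measurability of $\xi\mapsto\lambda_i(\xi)$ and the applicability of Fubini follow from the $\Ss_0\otimes\Bb(\R^d)$-measurability of $\JG$ together with independence of $X$ and $\omega_0$.
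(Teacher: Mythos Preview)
Your argument is correct and matches the paper's approach: the paper does not give a detailed proof but simply says the result follows by ``combining Proposition~\ref{prop:regularQNN} and Theorem~\ref{thm:DQLinkStat} and keeping in mind that outside $\cg$, $\|\xi-\J_{\Gamma}(\xi)\|\ge {\rm dist}(\xi,\Gamma)$'', which is exactly the inside/outside decomposition you carry out in full. Your write-up spells out the two inequalities and the conditioning step more explicitly than the paper does, but there is no methodological difference.
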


Note that, owing to Proposition~\ref{prop:finitenessDQ},  we have for any $X\in L^p_{\R^d}(\Prob)$
\[
	\dqbpn(X) \leq \dqpn(X),
\]
where equality does not hold in general   even for compactly supported r.v. $X$
although  it is shown in the companion paper~\cite{dualSharpRate} that both
quantities coincide asymptotically in the bounded case.

\subsection{Rate of convergence : Zador's Theorem for dual quantization}

In the companion paper~\cite{dualSharpRate}, we establish the following theorem which looks formally identical to the celebrated Zador Theorem
for regular vector quantization. 

\begin{thm}
$(a)$ Let $X\in L_{\R^d}^{p+\delta}(\Prob),\, \delta > 0$, absolutely continuous w.r.t.
to the Lebesgue measure   on $(\R^d,\Bb(\R^d))$  and $\ProbX = h. \lambda^d$. Then
\[
\limn n^{1/d} \, \debpn(X) =  \Qdpn \cdot
\normdp{h}^{1/p}
\]
where
\[
 \Qdpn = \limn n^{1/d}\, \debpn\bigl(\Unifd \bigr) = \inf_{n\geq 1}
 n^{1/d}\, \debpn\bigl(\Unifd \bigr).
\]
This constant satisfies $ \Qdpn\ge  \Qdpn^{\text{vq}}$, where
$\Qdpn^{\text{vq}}$ denotes the asymptotic constant for the sharp Voronoi vector
quantization rate of the uniform distribution over $[0,1]^d$, $i.e.$
\[
 \Qdpn^{\text{vq}} = \limn n^{1/d}\, e_{n,p}\bigl(\Unifd \bigr) = \inf_{n\geq 1}
 n^{1/d}\, e_{n,p}\bigl(\Unifd \bigr). 
\]
 Furthermore, when $d=1$ we know that $ \Qdpn = (\frac{2^{p+1}}{p+2})^{1/p} \,
 \Qdpn^{\text{vq}}$.

\smallskip
$(b)$ When $X$ has a compact support the above sharp rate holds for $\depn(X)$
as well.
\end{thm}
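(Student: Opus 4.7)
The plan is to follow the classical Bucklew--Wise scheme used for the original Zador theorem, adapted to the dual quantization error. I would first establish the uniform case: $n^{1/d}\, \debpn(\Unifd)$ converges to a finite, strictly positive limit, which then defines $\Qdpn$. The upper bound $O(n^{-1/d})$ is already Proposition~\ref{prop:productConstruction}, and strict positivity follows from $\Fep \ge \dist(\,\cdot\,,\Gamma)$ (second remark after Definition~\ref{def:dualQ}) combined with the classical Voronoi Zador lower bound. For convergence of the sequence itself, I would partition $[0,1]^d$ into $k^d$ sub-cubes of side $1/k$, place on each sub-cube a scaled copy of an $\varepsilon$-optimal dual quantizer of size $m$, and exploit the positive $p$-homogeneity of $\Fp$ to obtain the subadditivity relation
\[
(mk^d)^{p/d}\, \bar d^{p}_{mk^d}(\Unifd) \leq m^{p/d}\, \bar d^{p}_{m}(\Unifd),
\]
from which a Fekete-type argument extracts convergence of the full sequence after interpolating between successive integers of the form $mk^d$.

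For an $X\in L^{p+\delta}_{\R^d}(\Prob)$ with density $h$, I would run a cube decomposition of Bucklew--Wise type. Partition a large cube containing most of the mass into sub-cubes $(C_j)_j$ of small side $\ell$, with masses $\mu_j = \ProbX(C_j)$, allocate $n_j \approx n\cdot \mu_j^{d/(d+p)}\big/\sum_k \mu_k^{d/(d+p)}$ grid points to $C_j$ via a dilated almost-optimal uniform quantizer, sum the local dual errors and optimise $(n_j)$ using H\"older's inequality to obtain
\[
\limsn n^{1/d}\, \debpn(X) \leq \Qdpn \cdot \normdp{h}^{1/p}.
\]
The matching lower bound would come from an empirical-measure / Fatou argument: approximate $\ProbX$ by its normalised restrictions to the cubes, apply Jensen's inequality to the concave map $n\mapsto n^{-p/d}$, and let $\ell\to 0$. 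The tail contribution outside the large cube is absorbed by the dual Pierce inequality referenced in~\cite{dualSharpRate}, which is precisely where the extra $\delta$-moment is used.

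The inequality $\Qdpn \ge \Qdpn^{\text{vq}}$ is then immediate from $\debpn \ge e_{n,p}$. For the explicit one-dimensional constant I would compute $\debpn(\mathcal{U}([0,1]))$ directly: symmetry and convexity give that the optimal $n$-point grid is evenly spaced, and on each sub-interval $[x_i,x_{i+1}]$ the local error
\[
\Fp(\xi;\{x_i,x_{i+1}\}) = \frac{x_{i+1}-\xi}{x_{i+1}-x_i}(\xi-x_i)^p + \frac{\xi-x_i}{x_{i+1}-x_i}(x_{i+1}-\xi)^p
\]
integrates explicitly against the uniform density, yielding after summation the announced ratio $\bigl(2^{p+1}/(p+2)\bigr)^{1/p}$ with $\Qdpn^{\text{vq}}$. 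For part~$(b)$, when $\supp(\ProbX)$ is compact, any $\varepsilon$-optimal $\debpn$-quantizer can be augmented by at most $d+1$ vertices of a fixed simplex enclosing $\supp(\ProbX)$, a modification that does not affect the first-order rate, so the Zador limit of part~$(a)$ transfers from $\debpn(X)$ to $\depn(X)$.

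The main obstacle I anticipate is the matching lower bound in the general density step: unlike the Voronoi setting, where the local lower bound follows from a clean Jensen-plus-concavity argument, dual quantization carries the convex-hull constraint $\supp(\ProbX)\subset \cg$, so the restriction of an almost-optimal global grid to a sub-cube may fail to produce a valid local dual quantizer because the restricted points need not convex-hull-cover the sub-cube mass. One therefore has to enlarge each local grid by a controlled number of boundary points and show that this enlargement contributes only an $o(n^{-1/d})$ term, which is the principal technical subtlety.
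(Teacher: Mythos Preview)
The paper does not prove this theorem at all: it is stated in Section~2.6 and immediately deferred to the companion paper~\cite{dualSharpRate}. There is therefore no proof in the present paper against which to compare your proposal. The only part that is carried out here is the one-dimensional constant: Section~5.1 computes $\dqpn(\Unif)=\frac{2}{(p+1)(p+2)(n-1)^p}$ and recalls $e_n^p(\Unif)=\frac{1}{2^p(p+1)n^p}$, which gives exactly the ratio $(2^{p+1}/(p+2))^{1/p}$ by the direct integration you describe.

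Your outline is the natural Bucklew--Wise adaptation and is, at the level of a sketch, the expected strategy for the companion paper. You have correctly isolated the genuine difficulty: the convex-hull constraint means that restricting a global grid to a sub-cube need not yield a locally admissible dual quantizer, so the lower bound cannot be obtained by the naive localisation argument that works in the Voronoi case. One point to flag in your upper-bound step: when you tile with scaled copies of almost-optimal uniform dual quantizers, adjacent sub-cube grids share boundary points (or must be glued), and the resulting global grid must still convex-hull-cover each sub-cube; controlling the overlap so that the total grid size stays $\sum_j n_j + o(n)$ is a second place where the convex-hull constraint enters and deserves explicit attention. Your argument for part~(b), adding $d+1$ fixed simplex vertices, is the right idea and matches the spirit of Proposition~\ref{prop:finitenessDQ}$(a)$.
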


We also establish the following non-asymptotic upper-bound (at the exact rate).

 \begin{prop}[$d$-dimensional extended Pierce Lemma] \label{PdtQErrop} Let
 $p,\,\eta>0$. There exists an integer $n_{d,p,\eta}\ge 1$ and a real constant
 $C_{d, p,\eta}$ such that, for every $n\ge n_{d,p,\eta}$ and every random
 variable $X\!\in L_{\R^d}^{p+\eta}(\Omega_0,{\cal A}, \Prob)$, 
 \[ 
 \bar d_{n,p}(X)\le C_{d,p,\eta}\sigma_{p+\eta,\|.\|}(X)\,n^{-1/d}
 \] 
 where $\sigma_{p+\eta,\|.\|}(X)= \inf_{a\in \R^d} \|X-a\|_{L^{p+\eta}}$. 
 
 If $\supp(\ProbX)$ is
 compact then the same inequality holds true for $d_{n,p}(X)$.
 \end{prop}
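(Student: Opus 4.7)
\medskip
\textbf{Proof proposal.} The strategy is to build a grid by a dyadic multi-scale construction, balancing the local product-quantization error on a large hypercube against an $L^{p+\eta}$-tail estimate. Since $\Fp$ satisfies $\Fp(\xi+a;\Gamma+a)=\Fp(\xi;\Gamma)$ and is positively $p$-homogeneous, $\debpn$ is translation invariant and positively $1$-homogeneous in $X$. Selecting $a^{\ast}\!\in\R^d$ that achieves $\sigma_{p+\eta,\norm{\cdot}}(X)$ and rescaling, one reduces to proving $\debpn(Z)\le C\,n^{-1/d}$ when $\norm{Z}_{L^{p+\eta}}=1$.

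Fix $\alpha\in(0,d\eta/p)$ and, for $k=0,1,\ldots,K$ (with $K$ to be chosen), set $R_k=2^k$, $C_k=[-R_k,R_k]^d$ and $n_k=\lfloor c\,n\,2^{-k\alpha}\rfloor$ with $c=(1-2^{-\alpha})/2$, which guarantees $\sum_{k\le K}n_k\le n$. Let $\Gamma_k$ be the product quantizer of $C_k$ with $m_k+1$ points per coordinate, $m_k=\lfloor n_k^{1/d}\rfloor-1$, so that $|\Gamma_k|\le n_k$. Define $\Gamma=\{0\}\cup\bigcup_{k=0}^K\Gamma_k$, with cardinality $\le n$ (up to adjusting $c$). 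By Proposition~\ref{prop:productConstruction}, for every $\xi\in C_k$,
\[
\Fp(\xi;\Gamma)\le \Fp(\xi;\Gamma_k)\le C\,2^{kp}\,n_k^{-p/d}.
\]

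Partitioning $C_K$ into $C_0$ and annuli $C_k\setminus C_{k-1}$ and applying Markov's inequality to $\E\abs{Z}_\infty^{p+\eta}\le C\norm{Z}_{L^{p+\eta}}^{p+\eta}$,
\[
\E\bigl[\Fp(Z;\Gamma)\,\ind{\{Z\in C_k\setminus C_{k-1}\}}\bigr]\le C\,2^{kp}\,n_k^{-p/d}\,\Prob(\abs{Z}_\infty>R_{k-1})\le C\,n^{-p/d}\,2^{k(\alpha p/d-\eta)},
\]
which is summable in $k\ge0$ since $\alpha p/d-\eta<0$. Outside $C_K=\cg$ the nearest-neighbour extension yields $\Fbp(\xi;\Gamma)=\norm{\xi-\pi_\Gamma(\xi)}^p\le\norm{\xi}^p$ (using $0\!\in\Gamma$), so Hölder's inequality applied with exponent $(p+\eta)/p$ produces
\[
\E\bigl[\norm{Z}^p\,\ind{\{\abs{Z}_\infty>R_K\}}\bigr]\le C\,2^{-K\eta}.
\]

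Choose $K=\lceil p\log_2 n/(d\eta)\rceil$. This makes the tail $\le C n^{-p/d}$ and, since $\alpha<d\eta/p$, one still has $n_K\gtrsim n\,2^{-K\alpha}\ge 2^d$ for $n\ge n_{d,p,\eta}$, so all product quantizers are non-degenerate. Summing the annular and tail contributions gives $\debpn^p(Z)\le C\,n^{-p/d}$, hence $\debpn(X)\le C_{d,p,\eta}\,\sigma_{p+\eta,\norm{\cdot}}(X)\,n^{-1/d}$ after undoing the scaling. In the compactly supported case, one simply fixes $K$ so that $\spX\subset C_K$, which eliminates the tail entirely and even yields a bound on $\depn$. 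The main obstacle is the tuning of $\alpha$: it must lie in $(0,d\eta/p)$ to simultaneously (i) make the geometric series over annuli converge, (ii) ensure that the outermost quantizer remains non-degenerate at $K\sim\log n$, and (iii) respect the global budget $\sum n_k\le n$; the existence of such $\alpha$ is precisely what the moment assumption $X\in L^{p+\eta}$ purchases.
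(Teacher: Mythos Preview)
This proposition is not proved in the paper: together with the dual Zador theorem that precedes it, it is explicitly deferred to the companion paper~\cite{dualSharpRate} (``We also establish the following non-asymptotic upper-bound\ldots''). There is therefore no in-paper argument to compare against.

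On its own merits, your construction for $\bar d_{n,p}$ is correct and is the natural dyadic Pierce-type layering. The monotonicity $\Fp(\xi;\Gamma)\le\Fp(\xi;\Gamma_k)$ for $\Gamma_k\subset\Gamma$ holds because enlarging the grid enlarges the feasible set of the LP; Proposition~\ref{prop:productConstruction} supplies the annular bound; and the window $\alpha\in(0,d\eta/p)$ is exactly what makes $\sum_{k\ge 0} 2^{k(\alpha p/d-\eta)}$ converge while still keeping $n_K\gtrsim n^{1-\alpha p/(d\eta)}\to\infty$, so that the threshold $n_{d,p,\eta}$ is genuinely universal. The H\"older tail estimate and the choice of $K$ are also right.

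Your final sentence on the compact case, however, is a real gap. After normalizing to $\|Z\|_{L^{p+\eta}}=1$, the $\ell^\infty$-radius $R$ of $\supp(\Prob_Z)$ is \emph{not} controlled by the moment condition (take $Z=M$ with probability $M^{-(p+\eta)}$ and $Z=0$ otherwise: then $\|Z\|_{L^{p+\eta}}=1$ while $R=M$ is arbitrary). If you ``fix $K$ so that $\supp(\Prob_Z)\subset C_K$'' you force $K\ge\log_2 R$, and then the non-degeneracy requirement $n_K\ge 2^d$ reads $n\gtrsim R^{\alpha}$: your threshold becomes $n_0(X)$ rather than the universal $n_{d,p,\eta}$ asserted in the statement. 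Obtaining $d_{n,p}$ with the \emph{same} constants genuinely requires more work---typically one keeps $K\asymp\tfrac{p}{d\eta}\log_2 n$ as in the unbounded case and handles the residual region $\supp(\Prob_Z)\setminus C_K$ by a separate, much coarser construction whose error and point-count are controlled uniformly in $R$. Your one-line remark does not address this.
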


\section{Quadratic Euclidean case and Delaunay
Triangulation}\label{sec:Delaunay} 
In the case that $(\R^d,\norm{\cdot})$ is the Euclidean
space and $p = 2$, the optimality regions $D_I(\grid)$ have either empty
interior or are maximal, i.e. $\mathring D_I(\grid) = \emptyset$ or $D_I(\grid) =
\conv\{x_j: j \in I\}$. 
This follows from the fact that in the quadratic Euclidean case the dual
feasibility of a basis (index set) $I\in\Ig$ with respect to a given $\xi$ is locally constant outside the median hyperplanes defined by pairs of points of $\Gamma$.

This feature is also the key to the following theorem, which was first proved
by Rajan in~\cite{rajan} and establishes  the link between a solution to
$F^2(\xi; \Gamma)$ (the so-called power function in~\cite{rajan}) and the Delaunay
property of a triangle.

Recall that a triangle (or $d$-simplex) $\conv\{x_{i_1}, \ldots, x_{i_{d+1}}\}$
spanned by  a set of points belonging to $\grid = \{x_1, \ldots, x_k\}, k \geq d+1$ has the {\it
Delaunay property}, if the sphere spanned by $\{x_{i_1}, \ldots, x_{i_{d+1}}\}$
contains no point of $\grid$ in its interior.

\begin{thm}\label{thm:rajanExtended}
Let $\norm{\cdot} = \enorm{\cdot}$ be the Euclidean norm, $p=2$, and $\grid =
\{x_1, \ldots, x_k\} \subset \R^d$ with $\adim\{\Gamma\} = d$.

$(a)$ If $I\in\Ig$ defines a Delaunay triangle (or $d$-simplex), then
\[
	\lambda_I = A_I^{-1} \left(\begin{smallmatrix} \xi\\ 1
        \end{smallmatrix} \right)
\]
provides a solution to \ref{eq:LP} for every $\xi \in \conv\{x_j: j \in I\}$.

In particular, this implies $D_I(\grid) = \conv\{x_j: j \in I\}$.

$(b)$ If $I\in\Ig$ satisfies $\mathring D_I(\grid) \neq \emptyset$, then the
triangle (or $d$-simplex) defined by $I$ has the Delaunay property for $\grid$.
\end{thm}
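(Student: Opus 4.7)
The plan is to translate the linear-programming optimality conditions of Proposition~\ref{prop:LPoptimality} into the geometric language of circumscribed spheres; we will see that in the quadratic Euclidean case the dual feasibility condition is \emph{identical} to the Delaunay inscribed-sphere condition, which proves both directions at once.

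\textbf{Step 1 (geometric interpretation of the dual candidate).} Fix $I \in \Ig$ and write $u=(u_1,u_2)\in\R^d\times\R$. Since $(x_j)_{j\in I}$ is affinely independent, it admits a unique Euclidean circumsphere with some center $c_I\!\in\R^d$ and radius $r_I\geq 0$. The identity
\[
\|\xi - x_j\|_2^2 \;=\; \|\xi\|_2^2 - 2\xi^Tx_j + \|x_j\|_2^2
\]
together with a completion of the square shows that the system $A_I^T u = c_I$ (i.e.\ $x_j^T u_1 + u_2 = \|\xi-x_j\|_2^2$ for $j\in I$) is solved by
\[
u_1 = 2(c_I - \xi), \qquad u_2 = r_I^2 - \|c_I\|_2^2 + \|\xi\|_2^2.
\]
The first key point is that $c_I$ and $r_I$ do not depend on $\xi$: they are intrinsic to the simplex $\conv\{x_j:j\in I\}$.

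\textbf{Step 2 (dual feasibility $=$ Delaunay).} A direct expansion gives, for every $\ell\in\{1,\ldots,k\}$,
\[
\|\xi - x_\ell\|_2^2 \;-\; (x_\ell^T u_1 + u_2) \;=\; \|x_\ell - c_I\|_2^2 - r_I^2.
\]
Hence $A^T u \leq c$ (the dual constraint) is equivalent to
\[
\|x_\ell - c_I\|_2^2 \;\geq\; r_I^2 \qquad \forall\, \ell\in\{1,\ldots,k\},
\]
which says exactly that no vertex of $\grid$ lies in the open circumsphere of $(x_j)_{j\in I}$, i.e., $I$ has the Delaunay property.

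\textbf{Step 3 (proof of $(a)$).} Assume $I$ is Delaunay. By Step~2, $u$ defined in Step~1 is dual feasible. For any $\xi \in \conv\{x_j:j\in I\}$, the vector $\lambda_I = A_I^{-1}\left[\begin{smallmatrix}\xi\\1\end{smallmatrix}\right]$ is the (nonnegative) barycentric-coordinate vector, so extending it by $0$ outside $I$ gives a primal-feasible solution. Proposition~\ref{prop:LPoptimality}$(a)$ then yields optimality, hence $\conv\{x_j:j\in I\}\subset D_I(\grid)$; the reverse inclusion is in Proposition~\ref{prop:DI}$(a)$.

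\textbf{Step 4 (proof of $(b)$).} Conversely, pick $\xi\in\mathring{D}_I(\grid)$. Since $\adim\{x_j:j\in I\}=d$, the simplex $\conv\{x_j:j\in I\}$ has non-empty interior in $\R^d$, and because $D_I(\grid)\subset\conv\{x_j:j\in I\}$ and the interior operator is monotone, the strict positivity of all barycentric coordinates of $\xi$ is automatic; thus $I$ is a non-degenerate optimal basis. Proposition~\ref{prop:LPoptimality}$(b)$ then produces $u$ with $A_I^T u = c_I$ and $A^T u \leq c$, and Step~2 reads the latter off as the Delaunay property of $I$.

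\textbf{Main obstacle.} The only non-routine point is Step~1: recognizing that the dual vertex $u = (A_I^T)^{-1}c_I$, although parameterized by $\xi$, encodes the $\xi$-independent circumscribed sphere of the simplex. Once this identification is made, Step~2 is a one-line algebraic identity and Steps~3--4 are clean applications of the primal/dual LP optimality of Proposition~\ref{prop:LPoptimality}.
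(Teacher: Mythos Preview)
Your proof is correct and follows essentially the same route as the paper: both identify the dual vector $u$ via the circumcenter of the simplex $\{x_j:j\in I\}$ (the paper writes $z=\xi+u_1/2$, you write $u_1=2(c_I-\xi)$), then read the dual feasibility inequality $A^Tu\le c$ as the Delaunay empty-sphere condition, and conclude via Proposition~\ref{prop:LPoptimality}. One small presentational point: you use $c_I$ for both the circumcenter and the cost subvector $[\|\xi-x_j\|^2]_{j\in I}$ from the paper's notation, which makes the line ``$A_I^T u = c_I$'' mildly ambiguous; renaming the circumcenter would avoid that.
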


We provide here a short proof based on the duality for Linear Programming (see~Theorem p.93 and the remarks that follow in~\cite{padberg}), only   for the
reader's convenience.

\begin{proof}
First note that $I \in \Ig$ defines a Delaunay triangle (or $d$-simplex)  if there is exists a center $z\in\R^d$ such that for every $j\in I$
\begin{equation}\label{eq:proofRajan1}
\enorm{z - x_j} \leq \enorm{z - x_i}, \qquad 1\leq i \leq k, 
\end{equation}
and equality holds for $i\in I$. Suppose that $z = \xi + \frac{u_1}{2}$. Then 
\[\forall i \in I, \qquad
\enorm{z - x_i}^2 = \enorm{\xi - x_i}^2 + \xi^T u_1 - x_i^Tu_1 +
\Bigenorm{\frac{u_1}{2}}^2
\]
so that (\ref{eq:proofRajan1}) is equivalent to 
\begin{equation}\label{eq:proofRajan2}
 \begin{split} 
  \enorm{\xi - x_j}^2 - x_j^Tu_1 & \leq \enorm{\xi -
  x_i}^2 - x_i^T u_1,\quad 1\leq i \leq k,\, j \in I,\\
   u_2 &= \enorm{\xi - x_j}^2
  - x_j^Tu_1,\quad j \in I.
  \end{split}
\end{equation}

Note that this is exactly the dual feasibility condition of Proposition
\ref{prop:LPoptimality}.

\medskip $(a)$ Now let $I\in\Ig$ such that $\{x_j: j \in I\}$ defines a Delaunay triangle.
We denote by $z\in\R^d$ the center of the sphere spanned 
by  $\{x_j; j \in I\}$; let $j\!\in I$ be a fixed (arbitrary) index in what follows.
For every $\xi\in\R^d$, we define $u =u(\xi) = (u_1, u_2)$  as 
\[ 
u_1 = 2 (z - \xi)
\quad\text{ and }\quad
u_2 = \enorm{\xi - x_j}^2 - x_j^Tu_1.
\]
Consequently $z = \xi + \frac{u_1}{2}$, so that $u$ is dual feasible for (\ref{eq:LP}) owing to what precedes.

Since $\lambda_I = A_{I}^{-1} \left[\begin{smallmatrix} \xi\\ 1
        \end{smallmatrix} \right] \geq 0$ iff $\xi\in\conv\{x_j: j \in I\}$,
        Proposition \ref{prop:LPoptimality}(a) then yields that $\lambda_I$
        provides an optimal solution to (\ref{eq:LP}) for any $\xi\in\conv\{x_j:  j \in I\}$.
        
$(b)$ Let $I\in\Ig$ and choose some $\xi\in \mathring D_I(\grid)$.
Then Proposition \ref{prop:DI}(a) implies $\xi \in
\mathring{\overbrace{\conv\{x_j: j \in I\}}}$.

As a consequence, it holds $ A_{I}^{-1} \left[\begin{smallmatrix}
\xi\\ 1 \end{smallmatrix} \right] =\lambda_I > 0$,
        so that we conclude from Proposition \ref{prop:LPoptimality}(b)
        that the unique dual solution to (\ref{eq:LP}) is given by $\left(\begin{smallmatrix} u_1\\ u_2
        \end{smallmatrix} \right) = (A_{I}^T)^{-1}
        c_{I}$. Since moreover $A^T \left(\begin{smallmatrix} u_1\\ u_2
        \end{smallmatrix} \right) \leq c$, $(u_1,u_2)$ satisfies
        (\ref{eq:proofRajan2}) so that 
        \[
        z = \xi +\frac{u_1}{2}
        \] 
        is the center of a Delaunay triangle containing $\xi$ in its interior.
        \end{proof}

Consequently, if a grid $\grid \subset \R^d$ exhibits a Delaunay triangulation,
the dual quantization operator $\DQO$ is (up to the triangles borders) uniquely
defined and maps any $\xi \in \cg$ to the vertices of the Delaunay
triangle in which $\xi$ lies.

This yields a duality relation between  $\DQO$ and the nearest neighbor projection
$\pi_\grid$ since the Voronoi tessellation is the dual counterpart
of the Delaunay triangulation in the graph theoretic sense.

\section{Existence of an optimal dual quantization grid}\label{sec:existence}
In order to derive the existence of the optimal dual quantization grids, $i.e.$ the fact that the infimum over all grids $\grid \subset \R^d$ with
$\abs{\grid}\leq n$ in Definition \ref{def:dualQ} holds actually as a minimum, 
we have to discuss properties of $\Fep$ and $\dep$ as mapping of the quantization grid $\grid$. This leads us to introduce ``functional version" of $F_p(\xi,\Gamma)$ and $d_p(X,\Gamma)$.

We therefore define for every $n\ge 1$ and every $n$-tuple $\gamma=(x_1,\ldots,x_n)\!\in
(\R^d)^n$
\[
	\Fepn(\xi,\gamma) = \inf  \Bigl\{ \Bigl( \sum_{1\leq i \leq n} \lambda_i
	\norm{\xi - x_i}^p \Bigr)^{1/p} : \lambda_i \in [0,1] \text{ and } \sum_{1\leq
	i \leq n} \lambda_i x_i = \xi,
	\sum_{1\leq i \leq n} \lambda_i = 1  \Bigr\}
\]
and 
\[
	\depn(X,\gamma) = \Lpnorm{\Fepn(X,\gamma)}.
\]

 These functions are clearly symmetric and in fact {\em only depend on the value
 set} of $\gamma=(x_1,\ldots,x_n)$, denoted $\Gamma= \Gamma_{\gamma}=\{x_i, \,
 i=1,\ldots,n\}$ (with size at most $n$).
 Hence, we have 
 \[
 	\Fepn(\xi,\gamma) = \Fep(\xi;\grid_\gamma) \qquad \text{ and } \qquad 
 	\depn(X,\gamma) = \dep(X;\grid_\gamma),
 \]
 which implies
 \[
 	\depn(X) = \inf \bigl\{ \depn(X, \gamma): \gamma \in (\R^d)^n \bigr\}.
 \]
One also carries over these definitions to the unbounded case, $i.e.$ we
obtain $\Febpn(\xi,\gamma)$ and $\debpn(X,\gamma)$.

\medskip
As in section \ref{sec:defs}, we may drop a duplicate parameter $p$ in the
$p$-th power of the above expression, e.g. we write $\Fpn(\xi,\gamma)$ instead
of $\Fepn^p(\xi,\gamma)$.
Moreover, we assume again without loss of generality  that ${\rm conv}({\rm supp}\, \ProbX)$ {\em has a nonempty interior in $\R^d$} or equivalently that  
 $$
 {\rm span} ({\rm supp}\, \ProbX)=\R^d.
 $$ 
 \subsection{Distributions with compact support}
 
 We first handle the case when $\spX$ is compact.
 
 \begin{thm} \label{ExistK}
 \noindent $(a)$ Let $p \in [1, +\infty)$. For every integer $n\ge d+1$, the
 $L^p$-mean dual quantization error function $\gamma\mapsto \depn(X,\gamma)$
 is l.s.c. and if $p>1$ it also attains a minimum.
 
 \smallskip 
 \noindent  $(b)$ Let $p > 1$ and let $n\ge d+1$.  If $|\spX|\ge n$,
 any optimal grid  $\Gamma^{n,*}$ has size $n$   and $\depn(X)=0$ if and only if
 $|\spX|\le n$.  Furthermore, the sequence $n\mapsto \depn(X)$ decreases
 (strictly) to $0$ as long as it does not vanish.
 \end{thm}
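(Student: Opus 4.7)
The plan is to first establish lower semicontinuity of $\gamma\mapsto\dqpn(X,\gamma)$ by a two-step argument (pointwise in $\xi$, then Fatou's lemma), and then deduce existence for $p>1$ via a coercivity argument on minimising sequences that allows escaping coordinates to be harmlessly discarded. Part $(b)$ will follow from $(a)$ together with the zero-characterisation $\Fp(\xi,\Gamma)=0\Leftrightarrow \xi\in\Gamma$ and a local perturbation at points of $\supp(\ProbX)$.

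First, I would prove pointwise in $\xi$ that $\gamma\mapsto\Fpn(\xi,\gamma)$ is l.s.c.\ on $(\R^d)^n$: given $\gamma^k\to \gamma$ along a subsequence realising $\liminf_k\Fpn(\xi,\gamma^k)<+\infty$, choose LP-optimal weights $\lambda^k$ at $(\xi,\gamma^k)$; they belong to the compact simplex $\{\lambda\in\R^n_+:\sum\lambda_i=1\}$, so a further extraction gives $\lambda^k\to\lambda^\infty$. Passing to the limit in the affine constraint $\sum\lambda_i^k x_i^k=\xi$ and the cost $\sum\lambda_i^k\norm{\xi-x_i^k}^p$ shows $\lambda^\infty$ is admissible at $(\xi,\gamma)$ with objective equal to the liminf, whence $\Fpn(\xi,\gamma)\le\liminf_k\Fpn(\xi,\gamma^k)$. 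Fatou's lemma then upgrades this to $\dqpn(X,\gamma)\le\liminf_k\dqpn(X,\gamma^k)$.

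For existence when $p>1$, I would take a minimising sequence $(\gamma^k)_k$ and, after extraction, split the indices into $J:=\{i:\sup_k\norm{x_i^k}<+\infty\}$ and its complement, with $\norm{x_i^k}\to+\infty$ for $i\notin J$ and $x_i^k\to x_i^\infty$ for $i\in J$; set $\Gamma^\infty:=\{x_i^\infty,\,i\in J\}$. The crucial estimate is that if $\lambda^k(\xi)$ is LP-optimal at $\xi\in\supp(\ProbX)$ (which is compact), then $\lambda_i^k\norm{\xi-x_i^k}^p\le\Fp(\xi,\gamma^k)\le C$ uniformly, so H\"older's inequality yields
\[
\lambda_i^k\,\norm{\xi-x_i^k}\;\le\;C^{1/p}\,(\lambda_i^k)^{1-1/p},\qquad i\notin J,
\]
and since $p>1$ together with $\lambda_i^k\to 0$ this forces $\lambda_i^k x_i^k\to 0$. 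Extracting convergent weights once more gives $\lambda^k\to\lambda^\infty$ with $\lambda_i^\infty=0$ for $i\notin J$, so $\xi=\sum_{i\in J}\lambda_i^\infty x_i^\infty\in\conv(\Gamma^\infty)$ and $\Fp(\xi,\Gamma^\infty)\le\liminf_k\Fp(\xi,\gamma^k)$. Hence $\supp(\ProbX)\subset\conv(\Gamma^\infty)$, and Fatou again delivers $\dqp(X;\Gamma^\infty)\le\liminf_k\dqpn(X,\gamma^k)=\dqpn(X)$; padding $\Gamma^\infty$ with repetitions to an $n$-tuple produces the claimed minimiser.

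For part $(b)$, let $\Gamma^{n,*}$ be an optimal grid of size $m\le n$ provided by $(a)$. The characterisation $\Fp(\xi,\Gamma)=0\Leftrightarrow \xi\in\Gamma$ is immediate from positivity of the norm together with $\sum\lambda_i=1$. If $|\supp(\ProbX)|\ge n$ but $m<n$, pick $\xi_0\in\supp(\ProbX)\setminus\Gamma^{n,*}$ and form $\Gamma':=\Gamma^{n,*}\cup\{\xi_0\}$; the continuous function $g:=\Fp(\cdot,\Gamma^{n,*})-\Fp(\cdot,\Gamma')\ge 0$ (Theorem~\ref{thm:continuityXi}) satisfies $g(\xi_0)>0$ and therefore $g>0$ on an open neighbourhood $U\ni\xi_0$; since $\xi_0\in\supp(\ProbX)$ we have $\ProbX(U)>0$, and integrating gives $\dqp(X;\Gamma')<\dqp(X;\Gamma^{n,*})=\dqpn(X)$, contradicting optimality; hence $m=n$. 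The same perturbation delivers $\dqpnn{n+1}(X)<\dqpn(X)$ as long as $|\supp(\ProbX)|>n$, which by the zero-characterisation is precisely the regime $\dqpn(X)>0$ (the reverse implication being immediate with $\Gamma=\supp(\ProbX)$), and convergence $\dqpn(X)\to 0$ follows from the product-quantization bound of Proposition~\ref{prop:productConstruction}. The main obstacle is the coercivity step: one must verify that the pointwise-in-$\xi$ extraction of weights is compatible with Fatou and that mass does not leak at infinity while preserving $\supp(\ProbX)\subset\conv(\Gamma^k)$; both are controlled by the H\"older estimate, which is exactly where $p>1$ is indispensable, in keeping with the fact that for $p=1$ the minimum need not be attained.
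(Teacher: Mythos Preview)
Your proof is correct, but it departs from the paper's in two substantial ways, both of which make your argument more elementary.

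For the lower semicontinuity and the handling of escaping coordinates in part~$(a)$, the paper relies throughout on the dual representation $\Fpn(\xi,\gamma)=\sup_{u\in\R^d}\min_i\{\norm{\xi-x_i}^p+u^T(\xi-x_i)\}$: l.s.c.\ follows because a supremum of continuous functions is l.s.c., and for the existence proof one shows that when $\norm{x_i^{(k)}}\to\infty$ the term $\norm{\xi-x_i^{(k)}}^p+u^T(\xi-x_i^{(k)})\to+\infty$ (this is where $p>1$ enters), so the inner minimum is eventually attained only on the bounded indices. You instead work entirely on the primal side: compactness of the simplex extracts convergent weights, and your H\"older bound $\lambda_i^k\norm{\xi-x_i^k}\le C^{1/p}(\lambda_i^k)^{1-1/p}$ is the primal analogue of the paper's coercivity estimate, forcing $\lambda_i^k x_i^k\to 0$ on escaping indices. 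Both approaches isolate $p>1$ at exactly the same moment; yours avoids invoking LP duality at the cost of a $\xi$-dependent sub-extraction (which, as you note, is harmless once combined with Fatou). One small remark: your inclusion $\supp(\ProbX)\subset\conv(\Gamma^\infty)$ is not a direct consequence of the pointwise argument when $\liminf_k\Fp(\xi,\gamma^k)$ could be $+\infty$ at some $\xi$; it is cleaner to apply Fatou first (the pointwise inequality holds trivially when the right side is infinite) and then read off the inclusion from Proposition~\ref{prop:finitenessDQ}$(b)$.

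For part~$(b)$, your argument is markedly simpler than the paper's. The paper proceeds by induction on $n$ and, for the strict decrease, builds an explicit geometric neighbourhood of $\xi_0$ inside a suitable simplex via the incomplete basis theorem and the affine exchange lemma, then produces by hand a convex combination witnessing a small value of $\Fpn$ near $\xi_0$. You bypass all of this by invoking the continuity of $\xi\mapsto\Fp(\xi,\Gamma)$ on $\conv(\Gamma)$ (Theorem~\ref{thm:continuityXi}): since $g=\Fp(\cdot,\Gamma^{n,*})-\Fp(\cdot,\Gamma')$ is continuous, nonnegative, and strictly positive at $\xi_0$, it is strictly positive on a neighbourhood of $\xi_0$ carrying positive $\ProbX$-mass. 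This is a genuine shortcut; the paper's construction is more explicit but not needed for the stated conclusion.
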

 
\noindent {\bf Remark.} In Theorem~\ref{thm:derivative}$(a)$  the continuity of  $\depn(X,.)$ is established when $\Prob_{_X}$ assigns no mass to hyperplanes (strong continuity).

 \begin{proof} 
 \noindent $(a)$ {\em Lower semi-continuity.} 
Let $\gamma^{(k)}= (x_1^{(k)}, \ldots, x_n^{(k)}),\, k \geq 1$ be a sequence of
$n$-tuples that converges towards $\gamma^{(\infty)}$.
Keeping in mind that the dual representation (see Proposition~\ref{prop:dualF}) of $F_n^p$
\[
	\Fpn(\xi, (x_1, \ldots, x_n)) = \sup_{u\in\R^d} \min_{1\leq i\leq n} \bigl\{
	\norm{\xi - x_i}^p + u^T(\xi - x_i) \bigr\}
\] 
implies  that $\Fpn(\xi,.) $ is l.s.c. , we get
\[
	\liminf_{k\to\infty} \Fpn(\xi, \gamma^{(k)}) \geq \Fpn(\xi,
	\gamma^{(\infty)}).
\]

Consequently, one derives that $\depn(X,\,\cdot\,)$ is l.s.c. since
\[
\liminf_k \dqpn(X, \gamma^{(k)}) \ge  \E\Big( \liminf_k
\Fpn(X,\gamma^{(k)})\Big)\ge  \E\Big( \Fpn(X,\gamma^{(\infty)})\Big)=
\dqpn(X,  \gamma^{(\infty)})
\]
owing to Fatou's lemma.

\medskip
\noindent {\em Existence of an optimal dual quantization grid.} Assume that $\gamma^{(k)}$, $k\ge 1$,
 is a general sequence of $n$-tuples such that $\liminf_k \depn(X, \gamma^{(k)})
  <+\infty$ which exists owing to Proposition~\ref{prop:finitenessDQ}$(b)$).   Then $\liminf_k \min_{1\le i\le n} |x_i^{(k)}|<+\infty$ since, otherwise one has
\[
  \liminf_{k\to\infty}  \dqpn(X, \gamma^{(k)}) \ge \E\, {\rm dist}(X,
  \gamma^{(k)})^p \ge \E\, \liminf_{k\to\infty} {\rm dist}(X, \gamma^{(k)})^p =
  +\infty
\]
owing to Fatou's lemma. 

Now, up to  appropriate extractions, one may assume that $\depn(X,\gamma^{(k)})$
converges to a finite limit and that there exists a nonempty set of indices $J_{\infty}\subset \{1,\ldots,n\}$ such that

\[
\forall\, j\!\in J_{\infty},\; x^{(k)}_j \to x^{(\infty)}_j,\qquad \forall\,
j\notin J_{\infty},\; \norm{x_j^{(k)}}\to +\infty\; \mbox{ as } k\to \infty.
\]
Let $\xi\!\in \spX$,  $\gamma^{(\infty)}$ be any $n$-tuple of
$(\R^d)^n$ such that $\Gamma_{\gamma^{(\infty)}}= \{x^{(\infty)}_j,\,j\!\in J_{\infty}\}$ and denote
$n_\infty = \abs{J_\infty}$.
We then want to show
\begin{equation}\label{eq:proofExLSC}
	\liminf_{k\to\infty} \Fpn(\xi, \gamma^{(k)}) \geq \Fpn(\xi,
	\gamma^{(\infty)}).
\end{equation}

Moreover, let $u\in\R^d$ and $(y_k)_{k\ge 1}$ be a sequence such that
$\norm{y_k}\to +\infty$. Then it holds for $p>1$
\begin{equation}\label{eq:xiyk}
	\norm{\xi - y_k}^p + u^T(\xi - y_k) \to + \infty \quad\text{ as } k\to\infty.
\end{equation}
In the case when $u^T(\xi - y_k)$ is bounded from below, the above claim~(\ref{eq:xiyk})  is trivial.
Otherwise, we have $u^T(\xi - y_k) \to - \infty$ so that for $k$ large enough
it holds 
\[
	\norm{\xi - y_k}^p + u^T(\xi - y_k) = \norm{\xi - y_k}^p - \abs{u^T(\xi -
	y_k)}.
\]
Applying Cauchy-Schwarz and using the equivalence of norms on $\R^d$ we arrive
at
\[
		\norm{\xi - y_k}^p + u^T(\xi - y_k) \geq \norm{\xi - y_k}^p -
		\enorm{u}\enorm{\xi - y_k} \geq \norm{\xi - y_k}  \bigl(\norm{\xi - y_k}^{p-1} - C_{\norm{\cdot}}
		\enorm{u} \bigr) \to +\infty.
\]

This yields for any $u\in\R^d$
\[
\liminf_{k\to\infty} \min_{1\leq i \leq n} \bigl\{
	\norm{\xi - x_i^{(k)}}^p + u^T(\xi - x_i^{(k)}) \bigr\} \geq
	\min_{i \in J_\infty} \bigl\{
	\norm{\xi - x_j^{(\infty)}}^p + u^T(\xi - x_j^{(\infty)}) \bigr\}, 
\]
so that the dual representation of $\Fpn$ finally implies
(\ref{eq:proofExLSC}).

\medskip 
Now, assume   that the sequence $(\gamma^{(k)})_{k\ge 1}$ is asymptotically
optimal in the sense that $\depn(X)= \lim_k \depn(X, \gamma^{(k)}) <+\infty$. Fatou's lemma and (\ref{eq:proofExLSC}) imply
\[
\depn(X)= \lim_k \depn(X, \gamma^{(k)})  \ge \depnn{n_\infty}(X, 
\grid_{\gamma^{(\infty)}}) \ge \depnn{n_\infty}(X)\ge  \depn(X)
\]
so that
 $$
 \depn(X)=\depnn{n_\infty}(X,  \grid_{\gamma^{(\infty)}}) = 
 \depnn{n_\infty}(X). $$

 This proves the existence  of an optimal dual quantizer at level $n$.     
 
\medskip 
 \noindent $(b)$ To prove that the $L^p$-mean dual quantization error
decreases with optimal grids of full size $n$ at level $n$, as long as it does
not vanish, we will proceed by induction.

\smallskip
 \noindent {\sc Case~$n=d+1$.} Then $J_{\infty}^c=\emptyset$ and furthermore
 $\Gamma_{\gamma^{(\infty)}}$  has size $d+1$ since its convex hull contains
 $\spX$  which has a nonempty interior.  Owing to the lower semi-continuity of
 the function $\depn(X,\,\cdot\,)$,  $\gamma^{(\infty)}$ is optimal. 
 Furthermore, if $\spX = \Gamma_{n_0}:= \{x_1,\ldots,x_{n_0}\}$ has size $n_0\le d+1$, then
 setting successively for every $i_0\!\in \{1,\ldots,n\}$, $\xi=x_{i_0}$, $\lambda_j= \delta_{i_0j}$ (Kronecker symbol) yields
 $\Fepnn{n_0}(\xi; \Gamma_{n_0})=0$ for every $\xi\!\in \Gamma$, which implies $\depnn{n_0}(X)=
 \depnn{n_0}(X;\Gamma_{n_0})=0$.

\medskip
\noindent {\sc Case~$n>d+1$.} Assume now that  $|\spX|\ge n$. Then there exists
by the induction assumption an optimal grid 
$\Gamma^*_{n-1}=\{x^*_1,\ldots,x^*_{n-1}\} \subset \R^d $ at level $n-1$ which
is optimal  for $\depnn{n-1}(X,\,\cdot\,)$ and contains exactly $n-1$ points.
By Proposition~\ref{prop:finitenessDQ}$(a)$, this grid contains $d+1$ affinely independent points since
$\depnn{n-1}(X)<+\infty$ (and $\spann(\spX)=\R^d$) $i.e.$ $\adim \Gamma^*=d$. Let
$\xi_0 \!\in \spX\setminus\Gamma^*_{n-1}$ and let $\Gamma_{n-1}(\xi_0)=\{x^*_i,
\, i\!\in I_0\}$ be some affinely independent  points from $\Gamma^*_{n-1}$,
solution to the optimization problem~(\ref{eq:LP}) at level $n-1$ for 
$\Fepnn{n-1}(\xi_0, \Gamma^*_{n-1})$. By the incomplete (affine) basis theorem, there exists $I\subset\{1,\ldots,n-1\}$
such that  
\[
I\supset I_0,\; |I|= d+1,\;\{x^*_i,\, i\!\in I\} \mbox{ is an affine basis of $\R^d$}.
\]
By the (affine) exchange lemma, for every  index $j\!\in I_0$, $\{x^*_i,\,
i\!\in I,\, i\neq j\}\cup\{\xi_0\}$ is an affine basis of $\R^d$. Furthermore
$\displaystyle \bigcup_{j\in I_0}\Big(B(\xi_0;\varepsilon)\cap {\rm
conv}\Big(\{x^*_i,\, i\!\in I,\, i\neq j\}\cup\{\xi_0\}\Big)\Big)$ is a
neighbourhood of $\xi_0$ in ${\rm conv}(\Gamma^*_{n-1})$  since $\xi_0\!\in\spX\subset {\rm conv}(\Gamma^*_{n-1})$. Consequently  there
exists $i_0\!\in I_0$ such that
\[
\Prob\Big(X\!\in B(\xi_0;\varepsilon)\cap {\rm conv}\Big(\{x^*_i,\, i\!\in I,\,
i\neq i_0\}\cup\{\xi_0\}\Big)  \Big)>0.
\]

Now for every $v\!\in B(0;1)$ (w.r.t. $\norm{.}$), $v$ writes on the vector basis $\{x^*_i-\xi_0\}_{i\in I\setminus\{i_0\}}$, $v=\sum_{i\in I\setminus\{i_0\}}\theta_i (x^*_i-\xi_0)$ with coordinates $\theta_i$ satisfying $\sum_{i\in I\setminus\{i_0\}}|\theta_i|\le C_{d,\norm{\cdot},X}$, where $C_{d,\norm{\cdot},X}\!\in [1,+\infty)$ only depends on $d$, the norm $\norm{.}$ and $X$ (through the grid $\Gamma^*$). 

Let $\varepsilon\!\in (0,(C_{d,\norm{\cdot},X}+1)^{-1})$ be a positive real number to be specified later on. 

Let $\zeta\!\in B_{\norm{\cdot}}(\xi_0;\varepsilon)\cap {\rm conv}\big(\{x^*_i,\, i\!\in I,\, i\neq i_0\}\cup\{\xi_0\}\big)$. Then $v= \frac{\zeta-\xi_0}{\varepsilon}\!\in B_{\norm{\cdot}}(0;1)$ and
\[
\zeta = \underbrace{\big(1-\varepsilon \sum_{i\in I\setminus\{i_0\}}\theta_i\big)}_{>0} \xi_0+\varepsilon \sum_{i\in I\setminus\{i_0\}}\theta_ix^*_i. 
\] 
Furthermore, by the uniqueness of the decomposition (with sum equal to $1$), we also know that $\theta_i\ge 0$, $i\in I\setminus\{i_0\}$. Consequently
\[
\Fpn(\zeta,\Gamma^*_{n-1}\cup\{\xi_0\}) \le \Big(1-\varepsilon \sum_{i\in I\setminus\{i_0\}}\theta_i\Big)\|\zeta-\xi_0\|^p +\varepsilon \sum_{i\in I\setminus\{i_0\}}\theta_i \|\zeta-x^*_i\|^p.
\]

Now set $L^*:= \max_{i\in I}\|\xi_0-x_i^*\|$. Then
\[
\|\zeta-\xi_0\|\le \varepsilon  \sum_{i\in I\setminus\{i_0\}}\theta_i\|x^*_i-\xi_0\|\le \varepsilon C_{d,\norm{\cdot},X}L^*
\]
and, for every $i\!\in I\setminus\{i_0\}$, 
\[
\|\zeta-x^*_i\| \le \|\zeta-\xi_0\|+ L^*\le (\varepsilon C_{d,\norm{\cdot},X}+1)L^* \le 2L^*.
\]
Finally, for every  $\varepsilon\!\in (0,\frac{1}{C_{d,\norm{\cdot},X}+1})$  and every $\zeta\!\in B_{\norm{\cdot}}(\xi_0;\varepsilon)$, 
\[
\Fpn(\zeta, \Gamma_{n-1}^*\cup\{\xi_0\}) \le \varepsilon \widetilde L_p^*\quad \mbox{ with }  \widetilde L_p^*=C_{d,\norm{\cdot},X} (L^* )^p(1+2^p).
\]

On the other hand, if $\varepsilon < {\rm dist}(\xi_0, \Gamma_{n-1}^*)$,
\[
\Fpnn{n-1}(\zeta, \Gamma_{n-1}^*)\ge \dist(\zeta,\Gamma_{n-1}^*)^p \ge
\big(\dist(\xi_0, \Gamma_{n-1}^*)-\varepsilon \big)^p
\]
so that, for small enough $\varepsilon$, $\varepsilon \widetilde L^*<
\Fpnn{n-1}(\zeta, \Gamma_{n-1}^*)$ which finally proves the existence of an
$\varepsilon_0>0$ such that
\[
\forall\,\zeta \!\in B_{\norm{\cdot}}(\xi_0;\varepsilon)\cap {\rm
conv}\big(\{x^*_i,\, i\!\in I,\, i\neq i_0\}\cup\{\xi_0\}\big),\quad
\Fpn(\zeta, \Gamma_{n-1}^*\cup\{\xi_0\}) < \Fpnn{n-1}(\zeta, \Gamma_{n-1}^*).
\]

As a first result, 
\[
\depn(X)\le \dep(X; \Gamma_{n-1}^*\cup\{\xi_0\})< \dep(X;
\Gamma_{n-1}^*) = \depnn{n-1}(X).
\]

Furthermore, this shows that $J^c_{\infty}$ is empty $i.e.$ all the components
of the subsequence $(\gamma^{(k')})_k$ remain bounded and converge towards $\gamma^{(\infty)}$.
Hence $\gamma^{(\infty)}$ has $n$ pairwise distinct components since $\depn(X;
\gamma^{(\infty)})= \depn(X)<\depnn{n-1}(X)$ owing to the $l.s.c.$

Finally, the convergence to $0$   follows from
Proposition~\ref{prop:productConstruction}. 
\end{proof} 


\bigskip
\noindent
{\sc Further comments:}  When $\conv(\spX)$ is spanned by finitely many (extremal) points of $\spX$, $i.e.$ there exists   $\Gamma_{ext}\subset \spX$, $|\Gamma_{ext}|<+\infty$ such that 
\[
{\rm conv}(\spX) ={\rm conv}(\Gamma_{ext}),\;  \Gamma_{ext}\subset \spX,
\]
(we may assume w.l.o.g. that $|\Gamma_{ext}|\ge d+1$). In such a geometric configuration, it is natural to define a variant of the  optimal $L^p$-mean dual quantization  by only considering, for $n\ge  |\Gamma_{ext}|$,  grids $\Gamma$ containing $\Gamma_{ext}$ and contained in  ${\rm conv}({\rm supp}\,\ProbX)$ leading to  
\begin{equation}\label{extremalpts}
d^{ext}_{n,p}(X,\Gamma)= \inf \Big\{ \Lpnorm{\Fep(X,\Gamma)},\;
\Gamma_{ext}\subset \Gamma\subset {\rm conv}(\spX),\; |\Gamma|\le n\Big\}.
\end{equation}
For this error modulus the existence of an optimal quantizer directly follows form the l.s.c. of $\gamma\mapsto d^{ext}_{n,p}(X,\gamma)$ (with the usual convention). When these two notions of dual quantization co-exist ($e.g.$ for parallelipipedic sets), it does not mean that they coincide, even in the  quadratic Euclidean  case. 

\subsection{Distributions with unbounded support} 
Let $X\!\in L^p(\Prob)$ and let  $r\ge 1$. We define
\[
\Febp(\xi;\Gamma)= \Fep(\xi;\Gamma)\mbox{\bf 1}_{\{X\in {\rm
conv}(\Gamma)\}}+ \dist(\xi,\Gamma)\mbox{\bf 1}_{\{X\notin {\rm
conv}(\Gamma)\}}
\]
and
\[
\debp(X;\Gamma)= \Lpnorm{\Febp(X;\Gamma)} <+\infty,
\]

since $\debp(X;\Gamma)\le {\rm diam}(\Gamma) + \Lpnorm{\dist(X,\Gamma)}$. 

\begin{thm} \label{thm:exunbounded} Let $p>1$. Assume that the distribution $\ProbX$  is {\em strongly
continuous}, namely
\[
\forall\, H\, \text{ hyperplane of } \R^d, \; \Prob(X\!\in H)=0,
\]
and has a support with a nonempty interior. Then the extended $L^p$-mean dual
quantization error function $\gamma\mapsto \debpn(X,\gamma)$ is l.s.c.
Furthermore, it attains a minimum and $\debpn(X)$ is decreasing down to $0$.
\end{thm}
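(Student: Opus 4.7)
The plan is to adapt the strategy of Theorem~\ref{ExistK} to the unbounded setting, with two new ingredients. First, the piecewise definition $\Febp(\xi,\Gamma)=\Fp(\xi,\Gamma)\ind{\{\xi\in\cg\}}+\dist(\xi,\Gamma)^p\ind{\{\xi\notin\cg\}}$ forces one to examine what happens on $\partial\cg$, which is always contained in a finite union of affine hyperplanes; the strong continuity assumption on $\ProbX$ is precisely what makes this set $\ProbX$-negligible. Second, a minimizing sequence of grids may now have components escaping to infinity without causing the error to blow up, since the $\dist$-part of $\Febp$ only sees the nearest finite point; this will be handled via the Cauchy--Schwarz argument already used in the proof of Theorem~\ref{ExistK}$(a)$.

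For the lower semicontinuity of $\gamma\mapsto\debpn(X,\gamma)$, I would fix $\gamma^{(k)}\to\gamma^{(\infty)}$ in $(\R^d)^n$ and prove pointwise, for every $\xi\notin\partial\conv(\gamma^{(\infty)})$, that $\liminf_k\Febpn(\xi,\gamma^{(k)})\ge\Febpn(\xi,\gamma^{(\infty)})$. When $\xi$ lies in the interior of the limit hull, eventually $\xi\in\conv(\gamma^{(k)})$, and the l.s.c.\ of $\gamma\mapsto\Fpn(\xi,\gamma)$ follows from its dual representation as a supremum of continuous functions, exactly as in Theorem~\ref{ExistK}$(a)$. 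When $\xi$ lies outside the limit hull, the elementary bound $\Fp(\xi,\Gamma)\ge\dist(\xi,\Gamma)^p$ (coming from $\sum_i\lambda_i\norm{\xi-x_i}^p\ge\min_i\norm{\xi-x_i}^p$) together with continuity of $\dist(\xi,\cdot)$ delivers the same lower bound, regardless of whether $\xi\in\conv(\gamma^{(k)})$ or not. Fatou's lemma combined with $\Prob(X\in\partial\conv(\gamma^{(\infty)}))=0$ (by strong continuity) then concludes.

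For existence of an optimal grid, from a minimizing sequence $\gamma^{(k)}$ I would extract a subsequence along which each component either converges to some $x_j^{(\infty)}\in\R^d$ (for $j\in J_\infty$) or satisfies $\norm{x_j^{(k)}}\to+\infty$. Set $\Gamma^{(\infty)}=\{x_j^{(\infty)}: j\in J_\infty\}$, $n_\infty=\abs{J_\infty}\le n$. The crucial refinement, valid because $p>1$, is the Cauchy--Schwarz estimate from Theorem~\ref{ExistK}$(a)$: for every fixed $u\in\R^d$ and every bounded $\xi$, $\norm{\xi-x_j^{(k)}}^p+u^T(\xi-x_j^{(k)})\to+\infty$ whenever $j\notin J_\infty$. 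Hence the minimum in the dual representation of $\Fpn(\xi,\gamma^{(k)})$ is eventually achieved on $J_\infty$, giving $\liminf_k\Fpn(\xi,\gamma^{(k)})\ge\Fpnn{n_\infty}(\xi,\Gamma^{(\infty)})$ in the interior of $\conv(\Gamma^{(\infty)})$, while the $\dist$-bound again covers $\xi$ outside. Integrating via Fatou after discarding the negligible boundary yields $\debpnn{n_\infty}(X,\Gamma^{(\infty)})\le\debpn(X)$; combined with the reverse inequality $\debpnn{n_\infty}(X,\Gamma^{(\infty)})\ge\debpn(X)$ (by admissibility, since $n_\infty\le n$), this forces equality and exhibits an optimal quantizer (padded to size $n$ by arbitrary extra points if $n_\infty<n$).

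The monotonicity of $n\mapsto\debpn(X)$ is immediate from the nesting of admissible grid sets. For the convergence to zero, I would apply Proposition~\ref{prop:productConstruction} to a product grid $\Gamma_R$ on $[-R,R]^d$ of size at most $n$, which yields an estimate of the form
\begin{equation*}
\debpn(X) \le C_1\,R^p\,n^{-p/d} + C_2\,\E\bigl[\norm{X}^p\,\ind{\{\norm{X}>R\}}\bigr],
\end{equation*}
and then send $n\to+\infty$ first and $R\to+\infty$ afterwards, invoking $X\in L^p$ and dominated convergence for the tail. The main obstacle is the existence step: simultaneously controlling $\Fpn$ and $\dist(\cdot,\gamma^{(k)})^p$ along a sequence whose components only partly survive in the limit, which is exactly the place where both the hypothesis $p>1$ (via the Cauchy--Schwarz bound) and the strong continuity of $\ProbX$ (via the negligibility of $\partial\conv(\Gamma^{(\infty)})$) play an essential role.
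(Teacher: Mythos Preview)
Your argument is correct and follows essentially the same route as the paper: the pointwise lower bound for $\Febpn(\xi,\cdot)$ via the dual representation of $\Fpn$ inside the limiting hull, the uniform inequality $\Febpn(\xi,\Gamma)\ge\dist(\xi,\Gamma)^p$ outside, negligibility of $\partial\conv(\Gamma^{(\infty)})$ by strong continuity, Fatou's lemma, and the Cauchy--Schwarz estimate (crucially requiring $p>1$) to discard escaping components along a minimizing sequence. Your partition into ``interior of $\conv(\Gamma^{(\infty)})$'' versus ``exterior'' is slightly cleaner than the paper's three-case split based on $\limsup_k\conv(\gamma^{(k')})$, but the content is the same.

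One point you leave weaker than the paper: you only record non-strict monotonicity of $n\mapsto\debpn(X)$, whereas the paper's proof actually establishes \emph{strict} decrease. For this the paper needs an additional ingredient you do not invoke, namely Lemma~\ref{LemHyperplan}, which shows $\inf_H\Lpnorm{\dist(X,H)}>0$ over all hyperplanes $H$. This lemma forces any optimal grid $\Gamma^{*,n}$ (for $n$ large enough) to have full affine dimension, hence $\mathring{\overbrace{\conv(\Gamma^{*,n})}}\neq\emptyset$; one can then locate a point of $\spX$ in that open interior but off the grid and rerun the strict-improvement argument from Theorem~\ref{ExistK}$(b)$. If ``decreasing down to $0$'' is read in the weak sense your proof suffices; if strictness is intended (as the paper's proof delivers), you are missing exactly this non-flatness step via Lemma~\ref{LemHyperplan}.
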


 First we need a lemma which shows that under the strong continuity assumption made on $\ProbX$, optimal (or nearly optimal), grids cannot lie in an affine hyperplane.

\begin{lemma}\label{LemHyperplan} Let $p\ge 1$. If $\ProbX$  is {\em strongly
continuous}, then
\[
\varepsilon_{d-1,p}(X):=\inf\Big\{ \Lpnorm{\dist(X,H)}, \; H \text{
hyperplane}\Big\}>0.
\]
\end{lemma}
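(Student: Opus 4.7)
The plan is to argue by contradiction. Suppose that $\varepsilon_{d-1,p}(X) = 0$, so there exists a sequence of hyperplanes $(H_n)_{n\ge 1}$ with $\Lpnorm{\dist(X,H_n)}\to 0$. Parameterize each $H_n$ as $H_n = \{x\!\in\R^d : u_n^T x = a_n\}$ with $u_n$ chosen in the Euclidean unit sphere $S^{d-1}$ (compact) and $a_n\!\in \R$. Using the equivalence of norms on $\R^d$, there is a constant $\kappa>0$, independent of $n$, such that
\[
\dist(x,H_n)\ge \kappa\,\abs{u_n^T x - a_n},\qquad x\!\in\R^d.
\]
Up to extraction assume $u_n\to u_\infty\!\in S^{d-1}$ and split into two cases according to whether $(a_n)$ is bounded or not.

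\textbf{Case 1: $(a_n)$ bounded.} Extract further so that $a_n\to a_\infty\!\in\R$, and set $H_\infty = \{u_\infty^T x = a_\infty\}$. Then $\abs{u_n^T X - a_n}^p \to \abs{u_\infty^T X - a_\infty}^p$ pointwise, so Fatou's lemma yields
\[
\E\,\abs{u_\infty^T X - a_\infty}^p \le \liminf_n \E\,\abs{u_n^T X - a_n}^p \le \kappa^{-p}\liminf_n \E\,\dist(X,H_n)^p = 0.
\]
Hence $u_\infty^T X = a_\infty$ $\Prob$-a.s., i.e.\ $X\!\in H_\infty$ a.s., which contradicts the strong continuity of $\ProbX$ (the hyperplane $H_\infty$ would carry mass $1$).

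\textbf{Case 2: $\abs{a_n}\to +\infty$ along a subsequence.} Fix $M>0$ with $\Prob(\enorm{X}\le M) \ge 1/2$, which is possible since $X$ is a.s. finite-valued. For $n$ large enough, $\abs{a_n}\ge 2M$, and on the event $\{\enorm{X}\le M\}$ we have $\abs{u_n^T X} \le \enorm{X}\le M \le \abs{a_n}/2$, hence $\abs{u_n^T X - a_n}\ge \abs{a_n}/2 \ge M$. Consequently, for $n$ large,
\[
\E\,\dist(X,H_n)^p \ge \kappa^p M^p\cdot \Prob(\enorm{X}\le M)\ge \tfrac{1}{2}\kappa^p M^p >0,
\]
again contradicting $\Lpnorm{\dist(X,H_n)}\to 0$.

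Both cases being impossible, the infimum $\varepsilon_{d-1,p}(X)$ must be strictly positive. The step requiring most attention is the parameterization issue: $S^{d-1}\times\R$ is non-compact because of the intercept $a_n$, and the whole point of splitting into two cases is to rule out the ``escape at infinity'' of $a_n$ directly from integrability of $X$, while reserving the strong-continuity assumption for the bounded case, where a limit hyperplane actually exists.
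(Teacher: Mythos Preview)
Your proof is correct and follows essentially the same route as the paper: argue by contradiction, parameterize $H_n$ by $(u_n,a_n)\in S^{d-1}\times\R$, extract a convergent subsequence of the unit normals, and conclude that $X$ lies $\Prob$-a.s.\ in a limiting hyperplane. The only cosmetic difference is that the paper bounds the intercepts $a_n$ directly from $X\in L^p$ via the triangle inequality $\abs{a_n}\le \Lpnorm{(X|u_n)} + \varepsilon_n/\kappa$, thereby avoiding your Case~2 split, and then invokes continuity of the $L^p$-norm where you use Fatou's lemma; both variants are equally valid.
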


\begin{proof} Let $\kappa:= \inf_{\enorm{u}=1} \norm{u}>0$ where $
\enorm{\cdot}$ denotes the canonical Euclidean norm. Let $(.|.)$ denote the canonical inner product. Let $H= b+u^{\perp}$, $b\!\in \R^d$, $u\!\in \R^d$,
$\enorm{u}=1$  be a hyperplane. If $a\!\in H$,
\[
\norm{X-a}\ge \kappa \enorm{X-a}\ge \kappa |(X-a|u)|=\kappa |(X-b|u)|
\]
so that,  ${\rm dist}(X,H)\ge \kappa |(X-b,u)|$. 
Now, if  $\varepsilon_{d-1,p}(X)=0$, then there exists   two sequences $(u_n)_{n\ge
1}$ and $(b_n)_{n\ge 1}$ such that $|u_n|_2=1$ and $\varepsilon_n:=  \kappa
\Lpnorm{(X-b_n|u_n)}\to 0$.   In particular $|(b_n|u_n)|\le 2\Lpnorm{X}
+\varepsilon_n$. Up to an extraction one may assume that $u_n \to u_{\infty}$
(with $\enorm{u_{\infty}}=1$) and $(b_n|u_n)\to \ell\!\in \R$. Then, by
continuity of the $L^p$-norm, $(X|u_{\infty})= \ell$ $\Prob$-$a.s.$ which
contradicts the strong continuity assumption since $\{x\!\in \R^d\,:\, (x|u_{\infty})=\ell\}$ is a hyperplane.
\end{proof}
 
\noindent {\em Proof of Theorem~\ref{thm:exunbounded}.} The proof closely follows the lines of the compactly supported 
case. Let $\gamma^{(k)}$, $k\ge 1$, be a sequence of $n$-tuples such that 
$\liminf_k \bar d_{n,p}(X,\gamma^{(k)})<+\infty$. Let $J_{\infty}$ be defined
like in the proof of Theorem~\ref{ExistK} (after the appropriate extractions).    Set    $
\Gamma_{\gamma^{(\infty)}}=\{x^{(\infty)}_j,\; j\!\in J_{\infty}\}$ and
$\gamma^{(\infty)}$ accordingly.

Let $\xi\!\in \R^d$ and let $k'$ be a subsequence (depending on $\xi$) such that $\liminf_k \bar F_{n,p}(\xi, \gamma^{(k)})=\lim_{k}\bar F_{n,p}(\xi, \gamma^{(k')})$. We will inspect three cases:

\smallskip
-- If $\xi \!\in  \limsup_k {\rm conv}(\gamma^{(k')})$, then there exists a
subsequence $k"$ such that $\xi \!\in  {\rm conv}\{\gamma^{(k")}\}$ and  
following the lines of the proof of Theorem~\ref{ExistK}$(b)$, one proves
that  either $+\infty=\lim_{k}\bar F_{n,p}(\xi, \gamma^{(k')})=\lim_{k}\bar
F_{n,p}(\xi, \gamma^{(k")})\geq\Fbpn(\xi,\gamma^{(\infty)})$ or $\xi \!\in {\rm
conv}\{\gamma^{(\infty)}\}$ and
\[
\bar F_{n,p}(\xi, \gamma^{(\infty)})= F_{n,p}(\xi,
\gamma^{(\infty)})\le\liminf_{k}  F_{n,p}(\xi, \gamma^{(k")})= \lim_k \bar F_{n,p}(\xi, \gamma^{(k")})=  \liminf_k \bar F_{n,p}(\xi, \gamma^{(k)}).
\]

\smallskip
-- If  $\xi \!\notin  \limsup_k {\rm conv}(\gamma^{(k')})$ and $\xi \notin
\partial  {\rm conv}\{\gamma^{(\infty)})\}$, then,   for large enough $k$,
\[
\bar F_{n,p}(\xi, \gamma^{(k)})={\rm dist}(\xi,\gamma^{(k)})\to \dist(\xi, 
\Gamma_{\gamma^{(\infty)}} )= \bar F_{n,p}(\xi,\gamma^{(\infty)}).
\]
 
 \smallskip
 -- Otherwise, $\xi$ belongs to  $\partial{\rm conv}\{\gamma^{(\infty)}\}$. At
 such points $\bar F_{n,p}(\xi, .)$ is not l.s.c. at $\gamma^{(\infty)}$ but the boundary of the convex hull of finitely many points is made up with affine hyperplanes so that this boundary is negligible for $\ProbX$.
 
 Finally this proves that
  \[
 \ProbX(d \xi)\mbox{-}a.s.\qquad   \liminf_k \bar F_{n,p}(\xi, \gamma^{(k)})\ge \bar F_{n,p}(\xi, \gamma^{(\infty)}).
 \]

 \smallskip One concludes using Fatou's Lemma like in the compact case that, on
 the one hand $\bar d_{n,p}(X,\,\cdot\,)$ is l.s.c. by considering a sequence
 $\gamma^{(k)}$ converging to $\gamma^{(\infty)}$ and on the other hand that
 there exists an $L^p$-optimal grid for $\bar d_{n,p}(X,\,\cdot\,)$, namely
 $\gamma^{(\infty)}$  by considering an asymptotically optimal sequence  $(\gamma^{(k)})_{k\ge 1}$ since
\[
\bar d_{n,p}(X)=  \lim_k \bar d_{n,p}(X, \gamma^{(k)})  \ge \debp(X,
\grid_{\gamma^{(\infty)}})\ge  \debpnn{|J_{\infty}|}(X)\ge \bar
d_{n,p}(X)
 \]
so that   in fact  $\bar d_{n,p}(X)=   \debp(X,
\grid_{\gamma^{(\infty)}}) = \debpnn{|J_{\infty}|}(X)$.

 \bigskip
 For any grid $\Gamma$ with size at most $d$, $\Prob(X\!\in {\rm conv}(\Gamma))=0$ so that $\ProbX(d\xi)$-$a.s.$, $\bar
 F_{n,p}(\xi,\Gamma)=\dist(\xi,\Gamma)$ owing  to the strong continuity of
 $\ProbX$. Hence,   dual and primal quantization coincide which ensures the existence of optimal grids.

 Let $n\ge d+1$. Assume temporarily that any  optimal grids at level $n$,
 denoted $\Gamma^{*,n}$ is ``flat" $i.e.$ ${\rm conv}(\Gamma^{*,n})$ has an
 empty interior or equivalently that the affine subspace spanned by $\Gamma^{*,n}$ is included in a hyperplane $H_n$. Then, owing to the strong continuity assumption and Lemma~\ref{LemHyperplan},
 \[
 \debpn(X)= \debp(X, \Gamma^{*,n})\ge \Lpnorm{{\rm dist}(X,H_n)}\ge
 \varepsilon_{d-1,p}(X)>0.
 \]

Consequently this inequality fails for large enough $n$ since $\bar
d_{n,p}(X)\to 0$ $i.e.$ $\mathring{\overbrace{{\rm conv}(\Gamma^{*,n})}}\neq\emptyset$ for
large enough $n$.

Now assume that $(\mathring{\overbrace{{\rm conv}(\Gamma^{*,n'})}}\cap
\spX\subset \Gamma^{*,n'}$  for an infinite subsequence.  Let $\xi_0\!\in \R^d$ and
$\varepsilon_0>0$ such that $B(\xi_0,\varepsilon_0)\subset \spX$. This implies
that $B(\xi_0,\varepsilon_0)\cap \mathring{\overbrace{{\rm
conv}(\Gamma^{*,n'})}} =\emptyset$.

Then, for every $\xi \!\in B(\xi_0,\varepsilon_0/2)$, $\Febp(\xi,
\Gamma^{*,n'})= {\rm dist}(\xi, \Gamma^{*,n'})\ge (\varepsilon_0/2)$ so that 
 $$
 \debp(X, \Gamma^{*,n'})> (\varepsilon_0/2)\,\Prob(B(\xi_0,
 \varepsilon_0/2))>0 $$
  which contradicts the optimality of $\Gamma^{*,n'}$  at level $n'$ at least for $n$ large enough. Consequently for every large enough $n$, 
 \[
 \Big(\mathring{\overbrace{{\rm conv}(\Gamma^{*,n'})}}\setminus
 \Gamma^{*,n'}\Big) \cap \spX\neq \emptyset.
 \]
 Let $\xi $ be in this nonempty set. The proof of Theorem~\ref{ExistK}$(b)$
 applies at this stage and this shows that $\bar d_{n,p}(X)$ is (strictly) decreasing. \hfill $\Box$

\section{Numerical computation of optimal dual quantizers}\label{sec:numerical}

In order to derive optimal dual quantizers numerically, $i.e.$ by means of
gradient based optimization procedures, we have to verify the differentiability of
the mapping
\[
	\gamma\mapsto\depn(X,\gamma), \qquad \gamma\!\in(\R^d)^n
\]	
and derive it first order derivative.

Therefore, we will need a {\em  (dual) non-degeneracy  assumption} on the Linear
Program $\Fpn(\xi, \gamma)$ to establish the existence of the gradient of $\dqpn(X, \,\cdot\,)$
a bit like what is needed for $e_{n,p}(X,.)$.
\begin{definition} \label{def:dualnondeg}A grid $\grid_\gamma = \{ x_1, \ldots, x_n\}$ (related to the $n$-tuple $\gamma$) 
is non-degenerate with respect to $X$ if, for every $I \!\in \Igg{\grid_\gamma}$ and for $\ProbX (d\xi)$-almost every  $\xi
\!\in D_I \cap \spX$, it holds
\[
	A^T_{I^c} u < c_{I^c} \quad \text{ where } u = (A_I^T)^{-1} c_I.
\]
\end{definition}
 
\medskip
\noindent {\em Example.} In the Euclidean case (see~\cite{rajan}), this assumption is fulfilled  {\em regardless of $X$}, as soon as the Delaunay triangulation is intrinsically non-degenerate, $i.e.$ no $d+2$ points lie on a
hypersphere. Note it also implies the uniqueness of this Delaunay
triangulation.

\begin{thm}\label{thm:derivative}
Let $X\!\in L^p_{\R^d}(\Prob)$,  $p \geq 1$, such  that $\ProbX$ satisfies the
strong continuity assumption. Moreover, let $\gamma_0 = (x_1, \ldots, x_n)$ be an 
$n$-tuple in $(\R^d)^n$ such that $\spX \subset \conv(\grid_{\gamma_0})$.
Then:

$(a)$ The mapping 
\[
	\gamma \mapsto\depn(X,\gamma), \quad \gamma\!\in(\R^d)^n
\]
is continuous in $\gamma_0$.

$(b)$ If  $\gamma_0 =
(x_1, \ldots, x_n)$ is non-degenerate with respect to $X$ and  $y=(y^1,\ldots,y^d)\mapsto \norm{y}^p$ is differentiable on $\R^d$, then
$\dqpn(X,\,\cdot\,)$ is differentiable at $\gamma_0$ with partial derivatives
\[
	\frac{\partial}{\partial x_i^j} \dqpn(X, \gamma_0) = 
		\E \biggl[\lambda_i(X) \Bigl(
		\frac{\partial}{\partial x_i^j} \norm{X - x_i}^p - u_j(X) \Bigr) 
		\biggr], \quad 1\leq j \leq d,\, 1\leq i \leq n,
\]
where $\lambda(X)$ and $u(X)$ are the $\ProbX$-a.s. unique primal and
dual solutions for the Linear Program $\Fpn(X, \gamma_0)$.
\end{thm}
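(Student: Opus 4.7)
For part $(a)$, the lower semi-continuity at $\gamma_0$ is already contained in the proof of Theorem~\ref{ExistK}$(a)$ via Fatou's lemma applied to the dual representation of $\Fpn$, so the remaining task is upper semi-continuity. The plan is: for any sequence $\gamma^{(k)}\to\gamma_0$ and $\Prob_X$-a.e.\ $\xi$, exhibit a primal-feasible decomposition of $\xi$ in~\eqref{eq:LP} associated with $\gamma^{(k)}$ whose cost converges to $\Fp(\xi,\gamma_0)$. Since $\spX\subset\conv(\Gamma_{\gamma_0})$ and strong continuity makes the finitely many affine faces of $\conv(\Gamma_{\gamma_0})$ and of the simplices spanned by bases $\Prob_X$-negligible, I would first show that $\Prob_X$-a.e.\ $\xi$ lies in the interior of some optimal simplex, i.e.\ there exists an optimal basis $I^{*}_{\xi}\!\in\Igg{\Gamma_{\gamma_0}}$ with strictly positive barycentric coordinates $\lambda_{I^{*}_{\xi}}(\gamma_0)=A_{I^{*}_{\xi}}(\gamma_0)^{-1}[\xi,1]^T>0$. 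Continuity of $\gamma\mapsto A_{I^{*}_{\xi}}(\gamma)^{-1}[\xi,1]^T$ then keeps these coordinates nonnegative for $k$ large, so $\Fp(\xi,\gamma^{(k)})\le\sum_{i\in I^{*}_{\xi}}\lambda_i(\gamma^{(k)})\|\xi-x_i^{(k)}\|^p\to\Fp(\xi,\gamma_0)$. Dominated convergence, with the uniform bound $\max_i\|\xi-x_i^{(k)}\|^p$ on $\spX$ and a compact neighbourhood of $\gamma_0$, lifts this to the $\Prob_X$-expectation.

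For part $(b)$, the crucial input is the non-degeneracy of $\gamma_0$: for $\Prob_X$-a.e.\ $\xi$ the optimal basis $I^{*}=I^{*}(\xi)$ is unique and both $\lambda_{I^{*}}>0$ and $A_{I^{*c}}^T u<c_{I^{*c}}$ are strict at $\gamma_0$. These are open conditions in $\gamma$, so the same basis $I^{*}$ remains simultaneously primal- and dual-optimal on a neighbourhood of $\gamma_0$, whence Proposition~\ref{prop:LPoptimality} yields on this neighbourhood the closed form
\begin{equation*}
\Fp(\xi,\gamma)=\sum_{i\in I^{*}}\lambda_i(\gamma)\|\xi-x_i\|^p,\qquad \lambda_{I^{*}}(\gamma)=A_{I^{*}}(\gamma)^{-1}[\xi,1]^T,
\end{equation*}
which is $C^1$ in $\gamma$ (using invertibility of $A_{I^{*}}$ and differentiability of $y\mapsto\|y\|^p$).

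The partial derivative w.r.t.\ $x_i^j$ splits into an \emph{explicit} contribution $\lambda_i\,\partial_{x_i^j}\|\xi-x_i\|^p$ and an \emph{implicit} contribution $\sum_k(\partial_{x_i^j}\lambda_k)\|\xi-x_k\|^p$. Differentiating $A_{I^{*}}(\gamma)\lambda_{I^{*}}=[\xi,1]^T$ w.r.t.\ $x_i^j$ gives $A_{I^{*}}\mu=-\lambda_i\,\widetilde e_j$ with $\mu:=\partial\lambda_{I^{*}}/\partial x_i^j$ and $\widetilde e_j=(e_j^T,0)^T\!\in\R^{d+1}$. Pairing with $c_{I^{*}}^T A_{I^{*}}^{-T}$ and recognising the dual optimum $u=(A_{I^{*}}^T)^{-1}c_{I^{*}}$ yields $\sum_k\mu_k\|\xi-x_k\|^p=-\lambda_i\widetilde e_j^T u=-\lambda_i u_j$, which combined with the explicit contribution gives the stated formula; the case $i\notin I^{*}(\xi)$ is automatic since then $\lambda_i=0$. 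Differentiation under the expectation sign is then obtained by a routine dominated-convergence argument on the difference quotients of $\gamma\mapsto\Fp(X,\gamma)$, using a uniform local Lipschitz bound on $\gamma\mapsto\Fp(\xi,\gamma)$ over $\spX$ and a compact neighbourhood of $\gamma_0$.

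The main obstacle throughout is controlling the $\Prob_X$-exceptional set on which either the optimal basis is not unique or a primal/dual feasibility inequality is tight, because there $\gamma\mapsto\Fp(\xi,\gamma)$ may fail to be differentiable and exhibit a kink. The combination of strong continuity of $\Prob_X$ with the non-degeneracy assumption confines this bad set to a finite union of affine hyperplanes, hence $\Prob_X$-negligible, so it contributes nothing to the error modulus nor to its derivative, and the pointwise analysis transfers cleanly to $\dqpn(X,\,\cdot\,)$.
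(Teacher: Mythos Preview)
Your proposal is correct and matches the paper's proof closely: for $(a)$ the paper also takes l.s.c.\ from Theorem~\ref{ExistK} and gets u.s.c.\ by using strong continuity to place $\Prob_X$-a.e.\ $\xi$ strictly inside an optimal simplex (the set $H_{\gamma_0}$ of simplex faces being $\Prob_X$-null), then carrying the barycentric decomposition to nearby grids and concluding by Fatou; for $(b)$ the paper likewise uses primal and dual non-degeneracy to freeze the optimal basis $I^\ast$ on a neighbourhood of $\gamma_0$ and differentiates the closed form $\lambda_{I^\ast}^T c_{I^\ast}$, obtaining the same formula via the matrix identity $dA^{-1}=-A^{-1}(dA)A^{-1}$, which is equivalent to your implicit-differentiation computation. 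One small point of phrasing: the dual-degenerate exceptional set $N_{\gamma_0}$ is $\Prob_X$-negligible directly by Definition~\ref{def:dualnondeg}, not because it is a union of hyperplanes---that argument is reserved for the primal-face set $H_{\gamma_0}$.
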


\begin{proof} 
$(a)$ Owing to Theorem \ref{ExistK}(a), it remains to show that $\dqpn(X,\,\cdot\,)$
is u.s.c. at $\gamma_0 = (x_1, \ldots, x_n)$.

Therefore, denote  by $H_{\gamma_0}$ the set of all hyperplanes generated by
any subset $\{x_{i_1}, \ldots, x_{i_d}\}$ of $\grid_{\gamma_0}$
and let $\gamma_k = (x_1^k, \ldots, x_n^k) \!\in (\R^d)^n$ be a sequence
converging to $\gamma_0$ as  $k\to \infty$.
We will then show for every $\xi \!\in \spX \setminus H_{\gamma_0}$
\[
	\limsup_{k\to+\infty} \Fpn(X,\gamma_k) \leq \Fpn(\xi,\gz).
\]

Consequently, let $\xi\!\in\spX\setminus H_\gz$ and let  $I \!\in
\Igg{\grid_\gz}$ be a basis such that $\xi\! \!\in D_I(\grid_\gz)$.
Since $\xi \notin H_\gz$, it lies in the interior of $\conv\{ x_j:\, j \!\in
I\}$, which implies $\lambda_I = A_I^{-1} b > 0$ and
\[
	\Fpn(\xi, \gz) = \lambda_I^T c_I.
\]
Denoting
\[
A^k = \left[ \begin{matrix} x_1^k \ldots x_n^k\\ 1 \ldots 1
\end{matrix}\right], \quad  c^k = \left[ \begin{matrix} \norm{\xi - x_1^k}^p\\
\vdots\\ \norm{\xi - x_n^k}^p \end{matrix}\right],
\]
we clearly have $A^k \to A$ and $c^k \to c$ as $k\to\infty$.

Moreover, $A_I^k$ is regular for $k$ large enough, so that $(A_I^k)^{-1} \to A_I^{-1}$ a well.
But this also implies for $\lambda_I^k = (A_I^k)^{-1}b$
\[
	\lambda_I^k\to \lambda_I \quad \text{ and } \quad \lambda_I^k > 0 \quad\text{for $k$
	large enough}.
\]

Therefore, setting $\lambda^k_j=0$, $j\!\in I^c$, yields $A^k\lambda =b$ so that 
\[
	\limsup_{k\to\infty} \Fpn(\xi, \gk) \leq \lim_{k\to\infty} (\lambda^k)^T c^k= 
\lim_{k\to\infty} (\lambda_I^k)^T c_I^k
	= \lambda_I^T c_I = \Fpn(\xi,\gz).
\]

Since $\Prob(X\!\in H_\gz) = 0$ and $\dqpn(X,\gz) < + \infty$ by assumption,
Fatou's Lemma yields the u.s.c. of $\dqpn(X,\,\cdot\,)$ in $\gz$.

\smallskip
\noindent $(b)$ Let $N_\gz$denote  the $\Prob_X$-negligible set of points $\xi$ on which $\Fpn(\xi,\gz)$ is dually degenerate in the  
sense of Definition~\ref{def:dualnondeg}. Moreover let $\xi \!\in \spX
\setminus (H_\gz \cup N_\gz)$.
Then the Linear Program $\Fpn(\xi,\gz)$ is also non-degenerate in the primal sense 
since $\xi \notin H_\gz$  lies in the interior of any optimal  basis $I=I^*\!\in\Igg{\grid_\gz}$  for the $(LP)$ problem, which means $A_{I}^{-1} b > 0$. 

Now, owing to Proposition~\ref{prop:LPoptimality}, let $\lambda$ and $u$ denote primal and dual solutions for $\Fpn(\xi,\gz)$,
 $i.e.$
\begin{equation}\label{eq:proofDeriv}
  \Fpn(\xi,\gz) = \lambda_I^T c_I = u^T\! b. 
\end{equation}
As a consequence $c_I -A^T_Iu+\lambda_I=\lambda_I>0$ whereas  $c_{_I^c}-A^T_{I^c}u+\lambda_{I^c} = c_{_I^c}-A^T_{I^c}u>0$ owing to the non-degeneracy assumption since $\xi\notin N_{\gz}$. Finally 
\[
c - A^T u + \lambda = \left[\begin{array}{c}c_I -A^T_Iu+\lambda_I\\c_{_{I^c}}-A^T_{I^c}+\lambda_{I^c}\end{array}\right]> 0.
\] 
Since
\[
	\gamma \mapsto c - A^T u + \lambda
\]
is continuous at $\gz$, there exists a neighborhood $\U(\gz)$ of
$\gz$ such that, with obvious notations, for every $\gamma' = (x'_1, \ldots,  x'_n) \!\in
\U(\gz)$
\[
	c' -  (A')^T  u' +   \lambda' > 0
\]
\[
\mbox{with }  \hskip 2 cm A' = \left[ \begin{matrix}  x'_1 \ldots x'_n\\ 1 \ldots 1
\end{matrix}\right], \quad  c' = \left[ \begin{matrix} \norm{\xi -  
x'_1}^p\\ \vdots\\ \norm{\xi -  x'_n}^p \end{matrix}\right], 
\quad   \lambda = ((A')_I^{-1}b, 0), \quad  u' = (( A')^T_I)^{-1}
  c'_I.
\]
But this implies by Proposition \ref{prop:LPoptimality} that $\xi\!\in D_I(\Gamma_{\bar \gamma})$ as well ($i.e.$ $I$ is also
optimal) for every $ \gamma' \!\in \U(\gz)$, so that we conclude
\[
\Fpn(\xi, \gamma') = ( \lambda'_I)^T c'_I =  (u')^T\! b.
\]
Therefore we may differentiate the identity (\ref{eq:proofDeriv}) formally with
respect to the grid  $\gamma_0=(x_1,\ldots,x_n)$ where $x_i=(x^1_i,\ldots, x^d_i)$, $i=1,\ldots, n$. In practice, we will compute the partial derivatives with respect to $x_i^j$, $i\!\in I$, $j\!\in\{1,\ldots,d\}$, after noting that $\frac{\partial A_I^T}{ \partial x_i^j}=[\delta_{ij}] $ (Kronecker symbol) and that the differential of $d A^{-1}$ on $GL(d,\R)$ is given by $d A^{-1}=-A^{-1}(dA)A^{-1}$. Then, still with $A_I= \left[ \begin{matrix}  \dots x_i \ldots  \\  \ldots 1\ldots
\end{matrix}\right]_{i\in I}$, $c_I = \left[ \begin{matrix} \norm{\xi - \bar x_i}^p  \end{matrix}\right]_{i\in I}$ and $b=\left[\begin{array}{c}\xi\\1\end{array}\right]$,

\begin{eqnarray*}
\frac{\partial}{\partial x_i^j} \Fpn(\xi, \gamma_0) &= &\frac{\partial}{\partial x_i^j}\big(A_I^{-1}b\big)c_I +\big(A_I^{-1}b\big)^T\frac{\partial}{\partial x_i^j} c_I\\
&=& \left(-A^{-1}_I\Big(\frac{\partial}{\partial x_i^j}A_I\Big)A^{-1}_Ib\right)^T c_I+ \lambda_I^T\left[\begin{array}{c}0\\\vdots\\ 0\\ \frac{\partial}{\partial x_i^j} \|x_i-\xi\|^p\\ 0\\ \vdots\\0 \end{array}\right]]\\
&=&- \lambda_I^T [\delta_{ij}](A^{-1}_I)^Tc_I + \lambda_i(\xi)\|x_i-\xi\|^p\\
&=& - \lambda_I^T [\delta_{ij}]u(\xi)+ \lambda_i(\xi)\|x_i-\xi\|^p\\
&=& \lambda_i(\xi)\big(\|x_i-\xi\|^p-u_i(\xi)\big)
\end{eqnarray*}
which is bounded as a function of $\xi$ on any compact set, so that the
assertion follows.
\end{proof}

\subsection{One dimensional setting}

In the one dimensional case, we can derive, due to a simpler geometrical
structure, more explicit expressions for $\Fpn$, $d_n^p(X,.)$ and its derivatives.

To be more precisely, let $\gamma = (x_1, \ldots, x_n) \!\in\{(\xi_1,\ldots,\xi_n)\!\in \R^n,\, \xi_1<\xi_2<\ldots<\xi_n\}$. Then
\[
	D_I(\grid_\gamma) = [x_i, x_{i+1}] \quad \text{for } I = \{i, i+1\},
\] 
so that we arrive at the following formula for the dual quantization error
\begin{equation}\label{eq:DQOneDim}
	\dqpn(X, \gamma) = \sum_{i=1}^{n-1} \frac{1}{x_{i+1} - x_i}
	\!\int_{x_i}^{x_{i+1}} \big((x_{i+1} - \xi)(\xi - x_i)^p  + (\xi - x_i)(x_{i+1} - \xi)^p\big) \,\ProbX(d\xi).
\end{equation}
When ${\rm supp}(\ProbX)$ is compact, we set $I=[a,b]= \conv({\rm supp}(\ProbX))$, we fix the endpoints of the grid (following~(\ref{extremalpts}) though keeping the notation $d^n_p$) and we consider $\gamma \!\in\{(\xi_1,\ldots,\xi_n)\!\in I^n,\, a=\xi_1<\xi_2<\ldots<\xi_n=b\}$.

\medskip 
\noindent {\bf Uniform distribution:}
For the uniform distribution $\Unif$ we can even compute the exact solutions for
the dual quantization problem. Therefore, one easily derives from
(\ref{eq:DQOneDim})
\[
	\dqpn(\Unif, \gamma) = \frac{2}{(p+1)(p+2)} \sum_{i=1}^{n-1} (x_{i+1} -
	x_i)^{p+1},\; x_1=0,\; x_n=1,
\]
so that setting $y_i = x_{i+1} - x_i$, $i=1,\ldots,n$, yields
\begin{equation*}\label{eq:DQUnif}
	\dqpn(\Unif) = \frac{2}{(p+1)(p+2)} \min\biggl\{ \sum_{i=1}^{n-1} y_i^{p+1} :
	\sum_i y_i = 1, y_i \geq 0 \biggr\}.
\end{equation*}

The solution to this problem is obviously given by $y_i = \frac{1}{n-1}$,
which implies that the grid
\[
 \gamma^*=\{x^*_i\,:\, 1\le i\le n\}\quad\mbox{  with }\quad 	x^\ast_i = \frac{i-1}{n-1} ,\quad 1\le i\le n,
\]
is optimal and 
\[ 
\dqpn(\Unif) = \frac{2}{(p+1)(p+2)}\,\frac{1}{(n-1)^p}.
\]

Recall, see $e.g.$~\cite{Foundations},  that it holds for ordinary quantization of the uniform distribution
\[
	x^{\ast,\text{vq}}_i = \frac{2i-1}{2n},\, i=1,\ldots,n, \quad \text{ and }\quad e_n^p(\Unif) =
	\frac{1}{2^p(p+1)}\,\frac{1}{n^p},
\]
so that we conclude for the sharp asymptotics
\[
	\limn n^{1/d}\, \depn(\Unif) = \left( \frac{2^{p+1}}{p+2}\right)^{1/p} \limn
	n^{1/d}\, e_{n,p}(\Unif).
\]

Furthermore, we recognize that an optimal dual quantizer of size $n+1$, namely $(\frac{i-1}{n})_{1\le i\le n+1}$,  is made
up by the $(n-1)$ midpoints of an optimal regular quantizer of size $n$ plus the two
interval endpoints. One may even show in this context that such a construction leads to
asymptotically optimal dual quantizers for any compactly supported distribution
in dimension one.

\bigskip \noindent {\bf General quadratic case:}
In the general quadratic setup, we derive from Theorem \ref{thm:derivative} for
$p = 2 $ or,   more simply in this $1D$-setting, using directly~(\ref{eq:DQOneDim}) that, for  an ordered grid $\gamma = (x_1, \ldots, x_n)$,
\[
	\frac{\partial  \dqpn}{\partial x_i}(X, \gamma) = \!\int_{x_{i-1}}^{x_{i+1}} \xi
	\, \ProbX(d\xi) - x_{i-1} \!\int_{x_{i-1}}^{x_{i}} \ProbX(d\xi) - x_{i+1}
	\!\int_{x_{i}}^{x_{i+1}} \ProbX(d\xi), \qquad 2\leq i \leq n-1.
\]
If $\conv(\spX)= [a,b]$, following the variant~(\ref{extremalpts}), we statically fix the endpoints $x_1 = a$ and $x_n = b$ in any optimization procedure to generate optimal dual quantizers.

\smallskip
Otherwise, in the unbounded case, we introduce boundary conditions taking into account ``outside" $[x_1,x_n]$
a nearest neighbor rule
\begin{equation*}
\begin{split}
	\frac{\partial  \dqbpn}{\partial x_1}(X, \gamma) &= 2 \!\int_{-\infty}^{x_{1}}
	(x_1-\xi) \, \ProbX(d\xi) + \!\int_{x_1}^{x_{2}}
	(\xi-x_2) \, \ProbX(d\xi)\\
		\frac{\partial  \dqbpn}{\partial x_n}(X, \gamma) &= 2 \!\int_{x_n}^{+\infty}
	(x_n-\xi) \, \ProbX(d\xi) + \!\int_{x_{n-1}}^{x_{n}}(\xi-x_{n-1}) \,
	\ProbX(d\xi).
\end{split}
\end{equation*}

The second derivative then reads when  $\ProbX$ is absolutely continuous with continuous density
\begin{equation*}
\begin{split}
	\frac{\partial^2  \dqbpn}{\partial (x_1)^2}(X, \gamma) &= 2 \!\int_{-\infty}^{x_{1}}
	\ProbX(d\xi) + (x_2-x_1) \frac{d\ProbX}{d\lambda^1} (x_1)\\
	\frac{\partial^2 \dqbpn}{\partial x_2\partial x_1} (X, \gamma) &=
	\frac{\partial^2 \dqpn}{\partial x_1\partial x_2} (X, \gamma) =-
	\!\int_{x_1}^{x_{2}} \ProbX(d\xi)\\ 
	\frac{\partial^2 \dqbpn}{\partial  (x_i)^2}(X, \gamma) &= \frac{\partial^2 \dqpn}{\partial  (x_i)^2}(X, \gamma) = (x_{i+1}-x_{i-1})
	\frac{d \ProbX}{d\lambda^1} (x_i), \quad  2\leq i \leq
	n-1,\\
	\frac{\partial^2 \dqbpn}{\partial x_{i+1}\partial x_{i}}
	(X, \gamma) &= \frac{\partial^2 \dqbpn}{\partial x_{i}\partial x_{i+1}}
	(X, \gamma) = - \!\int_{x_{i}}^{x_{i+1}} \ProbX(d\xi), \quad  \qquad
	2\leq i \leq n-1,\\ 
	\frac{\partial^2 \dqpn}{\partial x_{i+1}\partial x_{i}}
	(X, \gamma) &= \frac{\partial^2 \dqpn}{\partial x_{i}\partial x_{i+1}}
	(X, \gamma) = - \!\int_{x_{i}}^{x_{i+1}} \ProbX(d\xi), \qquad \quad 
	2\leq i \leq n-1,\\ 
	\frac{\partial^2 \dqbpn}{\partial x_{n-1}\partial x_{n}} (X, \gamma) &=
	\frac{\partial^2  \dqpn}{\partial x_{n}\partial x_{n-1}}(X, \gamma) = -
	\!\int_{x_{n-1}}^{x_{n}} \ProbX(d\xi)\\
	\frac{\partial^2 \dqbpn}{\partial  (x_n)^2} (X, \gamma) &= 2 \!\int_{x_n}^{+\infty}
	\ProbX(d\xi) + (x_n-x_{n-1}) \frac{d\ProbX}{d\lambda^1} (x_n).	
\end{split}
\end{equation*}

The above integral expressions can be for most distributions evaluated in
closed-form.
Therefore, it is straightforward to implement a Newton method to find a zero of
$\nabla\dqpn(X, \cdot)$,
which yields an optimal dual quantizer.
Such a procedure, initialized with an equidistant grid in the center of the
distribution, converges usually very fast (less than $10$ iterations) to an optimal grid.

\subsection{Multi-dimensional setting}
In the multi-dimensional case, the computation of  $\nabla \dqpn(X,\,\cdot\,)$
involves the evaluation of multi-dimensional integrals, for which in general no
closed-form solution is available and numerical evaluation of these integrals is
a rather time consuming task.

We therefore focus, as in the case of regular quantization, on a
 stochastic gradient optimization algorithm (also known as a ``Robbins-Monro" zero search procedure for the gradient).
Such an algorithm has the advantage of building up the necessary gradient
information step-by-step during the simulation and therefore is by several
magnitudes faster than a ``batch''-approach which evaluates the full gradient
at each iteration.

In the case of regular Voronoi vector
quantization, this stochastic algorithm  approach is  also known as {\it Competitive Vector Learning Quantization}
algorithm (CVLQ) (see \cite{pagesOQ}).
\renewcommand{\algorithmicrequire}{\textbf{Input:}}
\renewcommand{\algorithmicensure}{\textbf{Main loop:}}
\begin{algorithm}[H]
\begin{algorithmic}
\REQUIRE 
\STATE 
\begin{itemize}
   \item Step sequence $\alpha_k \geq 0$ such that  $\sum_{k\geq 0} \alpha_k =
			+\infty$, $\sum_{k\geq 0} \alpha_k^2 < +\infty$
  \item 	Initial grid $\gz \!\in (\R^d)^n$
\end{itemize}
\ENSURE
\FOR{$k = 0$ to $N-1$}
	\STATE Generate i.i.d. sample $X_k \sim X$ 
	\STATE Set 
	\STATE	$\qquad\gamma_{k+1} \leftarrow \gamma_{k} - \alpha_k
	\nabla_{\!\gamma_{k}} \Fpn(X_k, \gamma_{k}) $
\ENDFOR
\end{algorithmic}
\caption{CVLQ for dual Quantization}
\end{algorithm}

\vskip -0.25 cm 
To compare this procedure to the regular CVLQ-algorithm, we inspect the main
loop for the case $p=2$.
Given a realization $X_k$ of $X$, we only have to replace the Nearest Neighbor
search by a search for the Delaunay triangle $I^\ast$, which contains $X_k$. 
According to Theorem \ref{thm:rajanExtended}, the primal solution
$\lambda_{I}^{\ast}$ to the Linear Program $\Fpn(X_k, \gamma)$ is then given by
the barycentric coordinates of $X_k$ in the triangle $I^\ast$ and the dual
solution can be calculated by the formula
\[
	u^{\ast} = 2 (z^{\ast} - X_k),
\]
where $z^{\ast}$ is the center of the hypersphere spanning the triangle
$I^\ast$.
We therefore can simplify the partial derivative of  $\Fpn(X_k, (x_1, \ldots,
x_n))$ for $I^\ast$ being the Delaunay triangle containing $X_k$ to
\[
	\frac{\partial}{\partial x_i} \Fpn\bigl(X_k, (x_1, \ldots,x_n)\bigr) = 2
	\lambda_i^\ast (x_i - z^{\ast}).
\]

\floatname{algorithm}{Main loop:$\!\!$}
\renewcommand\thealgorithm{}
\begin{minipage}{\textwidth}
\centering
\begin{minipage}[t]{0.48\textwidth}
\hrule\smallskip
\captionof{algorithm}{regular CVLQ}

\vspace{-10pt}\hrule\medskip
\begin{algorithmic}
\FOR{$k = 0$ to $N-1$}
	\STATE $\bullet$ Generate i.i.d. sample $X_k \sim X$
	\STATE  $\bullet$ Find NN index $i^\ast$ of $X_k$ 
	in $\{x_1^{k}, \ldots, x_n^{k}\}$
	\STATE 
	\STATE 
	\FOR {$j = 1$ to $n$}
	\IF{$j = i^\ast$}
	\STATE $x^{k+1}_j \leftarrow x^{k}_j - \alpha_k\, 
	(x_j^{k} - X_k)$
	\ELSE
	\STATE $x^{k+1}_j \leftarrow x^{k}_j$
	\ENDIF
	\ENDFOR
\ENDFOR
\end{algorithmic}
\hrule
\end{minipage}
\begin{minipage}[t]{0.48\textwidth}
\hrule\smallskip
\captionof{algorithm}{CVLQ for dual quantization}
\vspace{-10pt}\hrule\medskip
\begin{algorithmic}
\FOR{$k = 0$ to $N-1$}
	\STATE $\bullet$ Generate i.i.d. sample $X_k \sim X$
	\STATE $\bullet$ Find Delaunay triangle $I^\ast$ in $\{x_1^{k}, \ldots , x_n^{k}\}$,
	 which contains $X_k$
	\STATE $\bullet$ Compute LP solution $\lambda_{I}^{\ast}$ and center $z^{\ast}$
	\FOR {$j = 1$ to $n$}
	\IF{$j \!\in I^\ast$}
	\STATE $x^{k+1}_j \leftarrow x^{k}_j - \alpha_k\,
	  \lambda_j^\ast \,  (x_j^{k} - z^{\ast})$
	\ELSE
	\STATE $x^{k+1}_j \leftarrow x^{k}_j$
	\ENDIF
	\ENDFOR
\ENDFOR
\end{algorithmic}
\hrule
\end{minipage}
\medskip
\end{minipage}

\medskip
These procedures usually converge quickly to a first approximation of an
optimal quantization grid.
For a local refinement, we propose to combine the above approach with a few
quasi-Newton steps of a deterministic optimization algorithm, where the
evaluation of the integral expression is performed by a Monte Carlo or a Quasi Monte Carlo,  method (see~\cite{diplom}). As concerns the Uniform distribution on $[0,1]^2$ below, note that  we considered  the variant~(\ref{extremalpts}) of the quadratic mean  dual quantization error where the four vertices of the unit square are ``anchor points".

\smallskip
Numerical results obtained from  this approach are given for the Uniform distribution on
$[0,1]^2$ in figures~\ref{fig:firstU} to~\ref{fig:lastU} with grid sizes $8$ to $16$, 
for the standard normal distribution on $\R^2$ for a grid size of $250$ in figure~\ref{fig:normal} 
and for the joint distribution of the standard Brownian motion at time $1$ and its supremum over the unit interval in figure~\ref{fig:plotBMsup}.

\begin{figure}
    \centering 
    \begin{tabular}{cc}   
         \!\includegraphics[width=0.45\textwidth,angle=270]{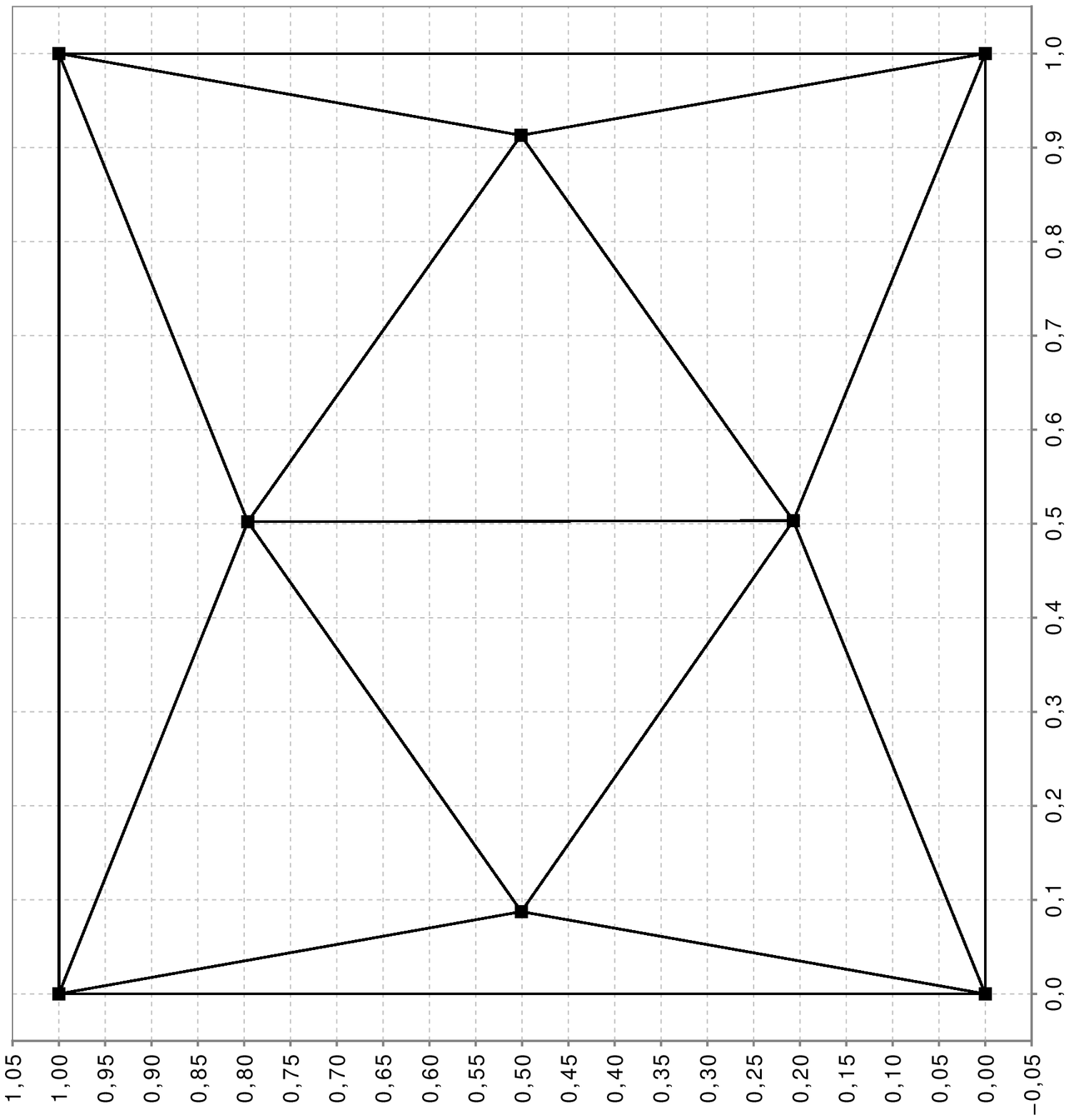}&    \!\includegraphics[width=0.45\textwidth,angle=270]{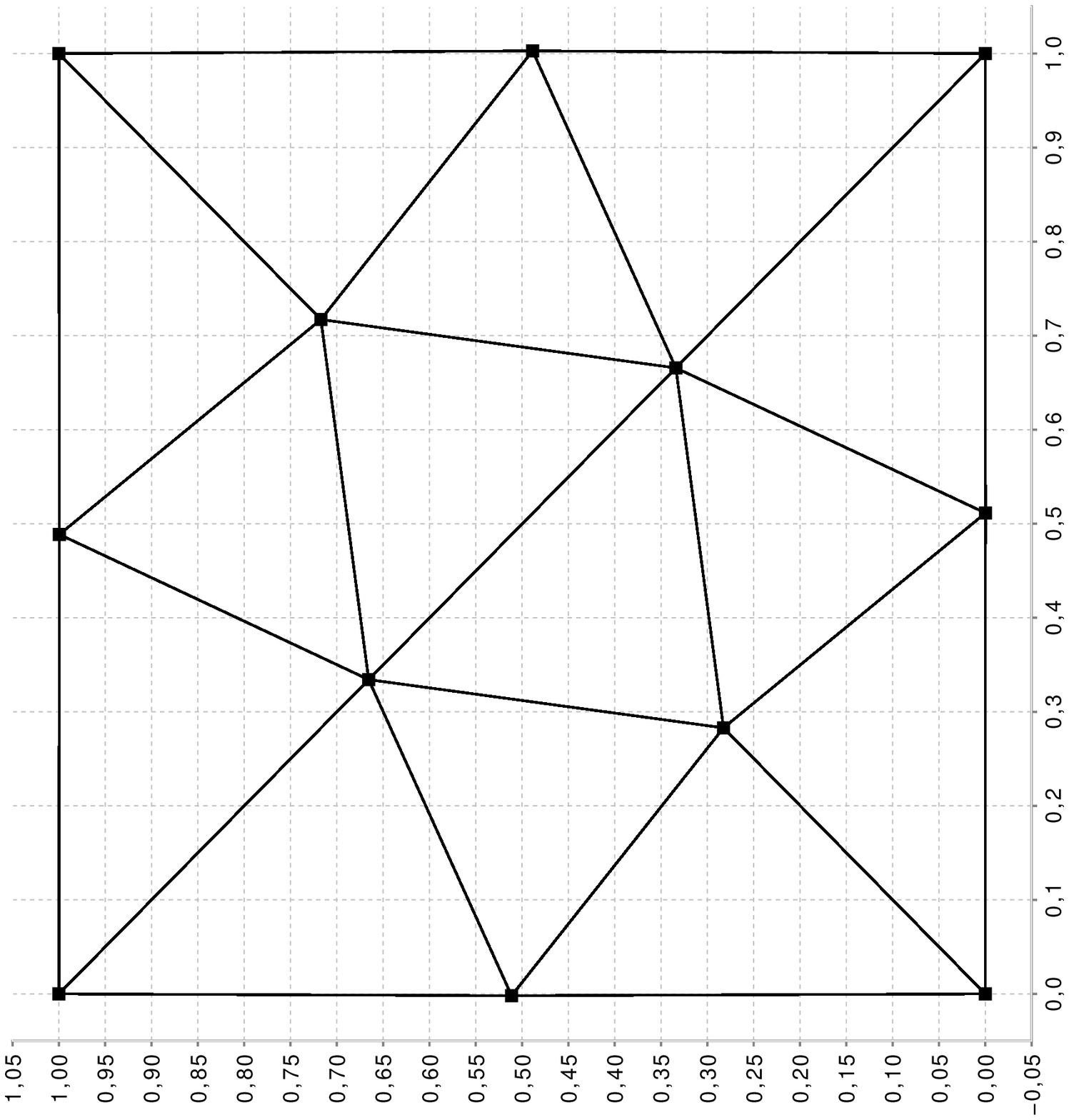}
     \end{tabular}
    \caption{Dual Quantization for $\mathcal{U}([0,1]^2)$, $N =
    8$ and $N=12$}\label{fig:firstU}
 \end{figure}

\begin{figure}
    \centering
       \begin{tabular}{cc}   
     \!\includegraphics[width=0.45\textwidth,angle=270]{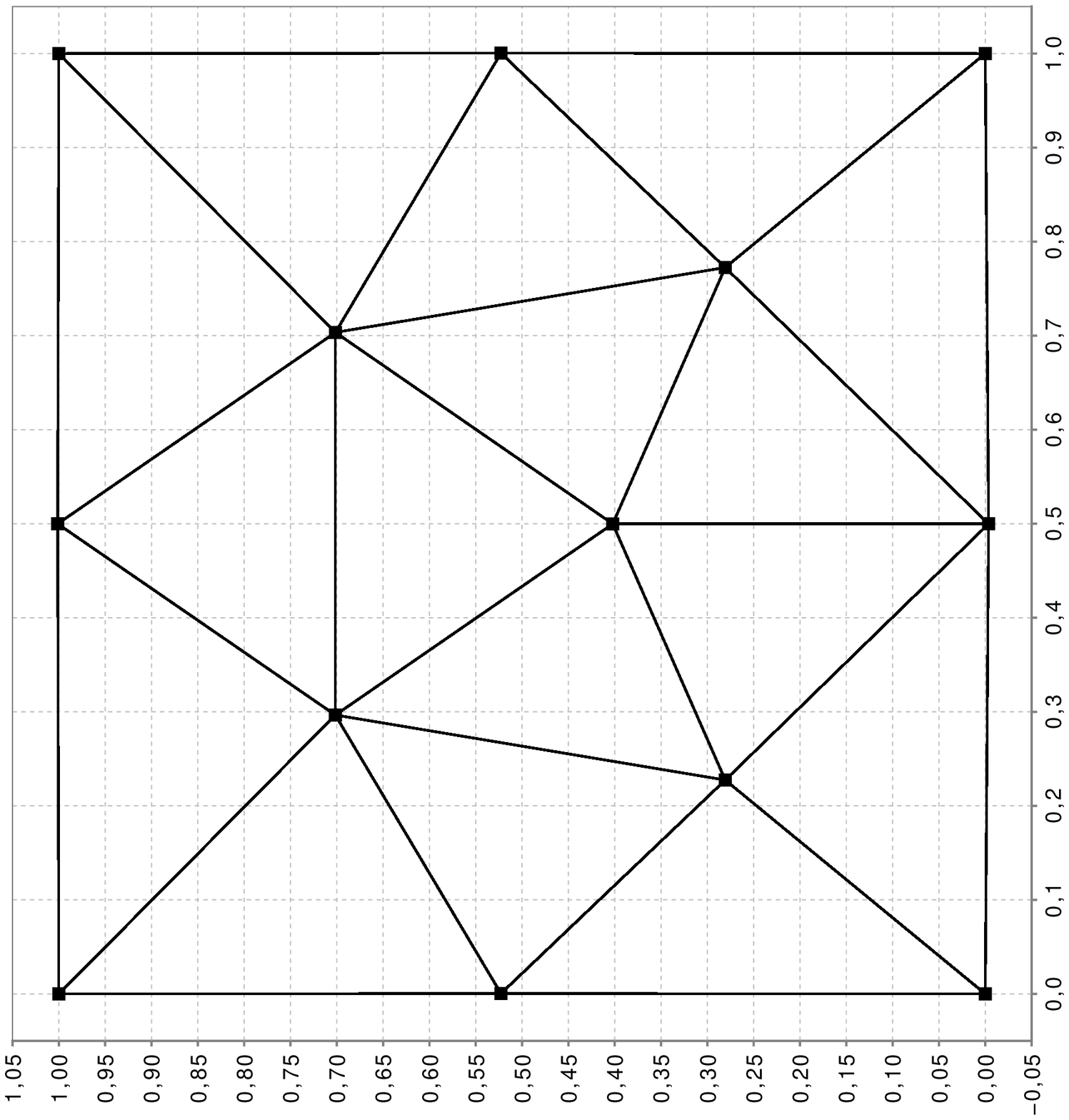}& \!\includegraphics[width=0.45\textwidth,angle=270]{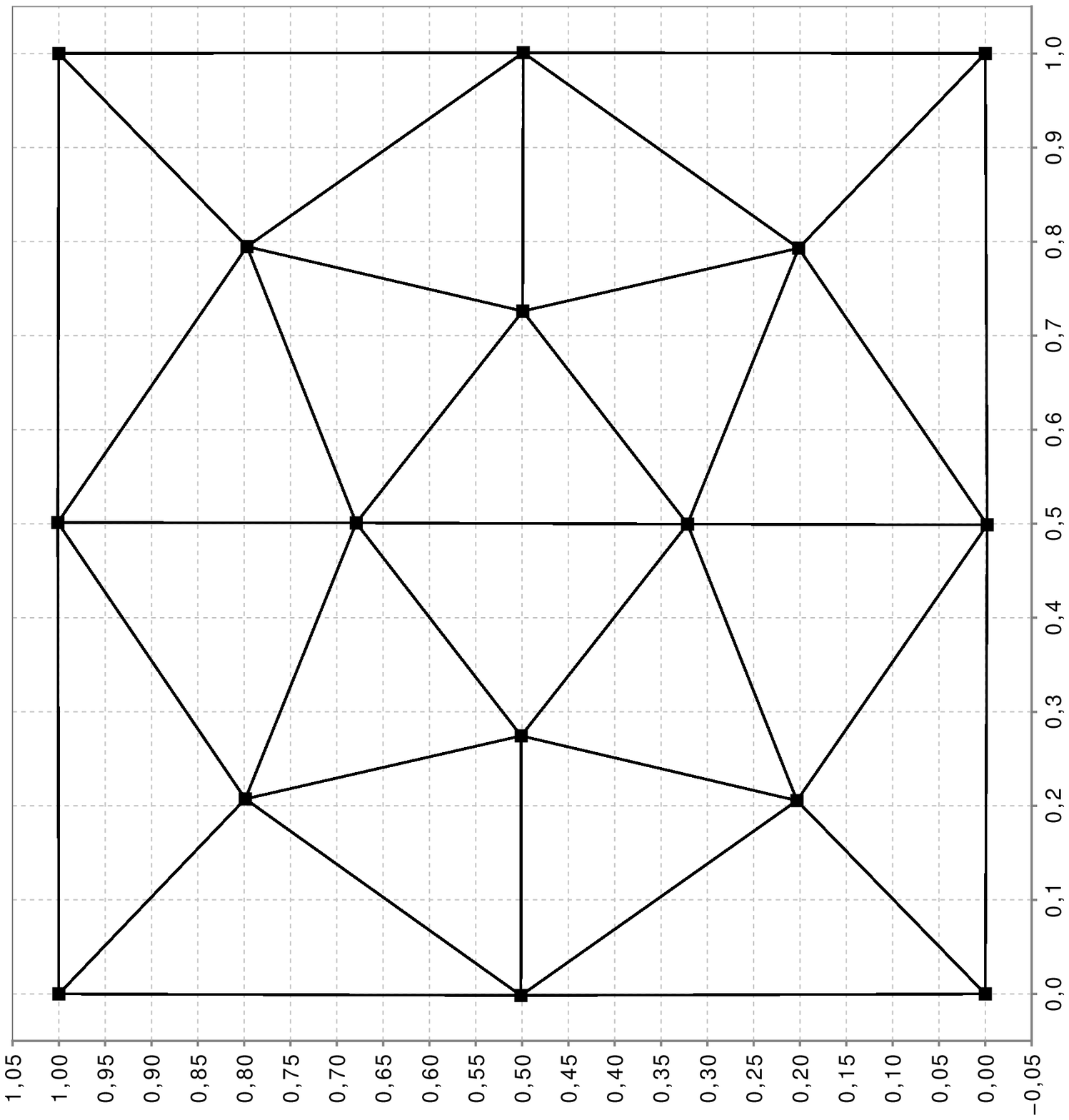}
       \end{tabular}
    \caption{Dual Quantization for $\mathcal{U}([0,1]^2)$, $N = 13$ and $N=16$}\label{fig:lastU}
  \end{figure}

\begin{figure}
    \centering     
    \!\includegraphics[width=0.67\textwidth,angle=270]{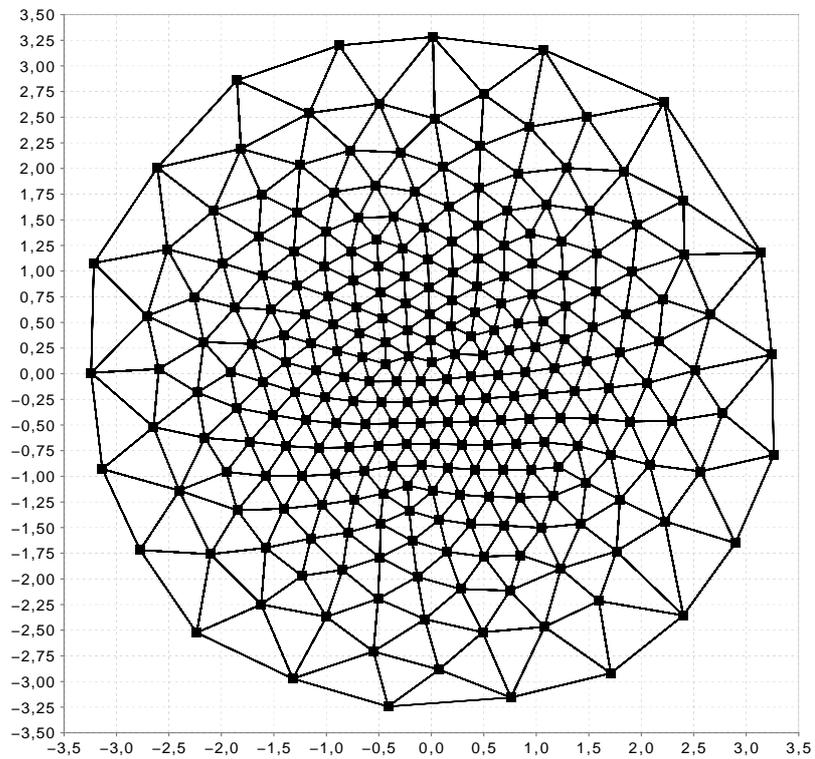}
    \caption{Dual Quantization for $\mathcal{N}(0, I_2)$ and $N =
    250$}\label{fig:normal}
  \end{figure}
 
\begin{figure}
   \centering  
   \!\includegraphics[width=0.65\textwidth,angle=270]{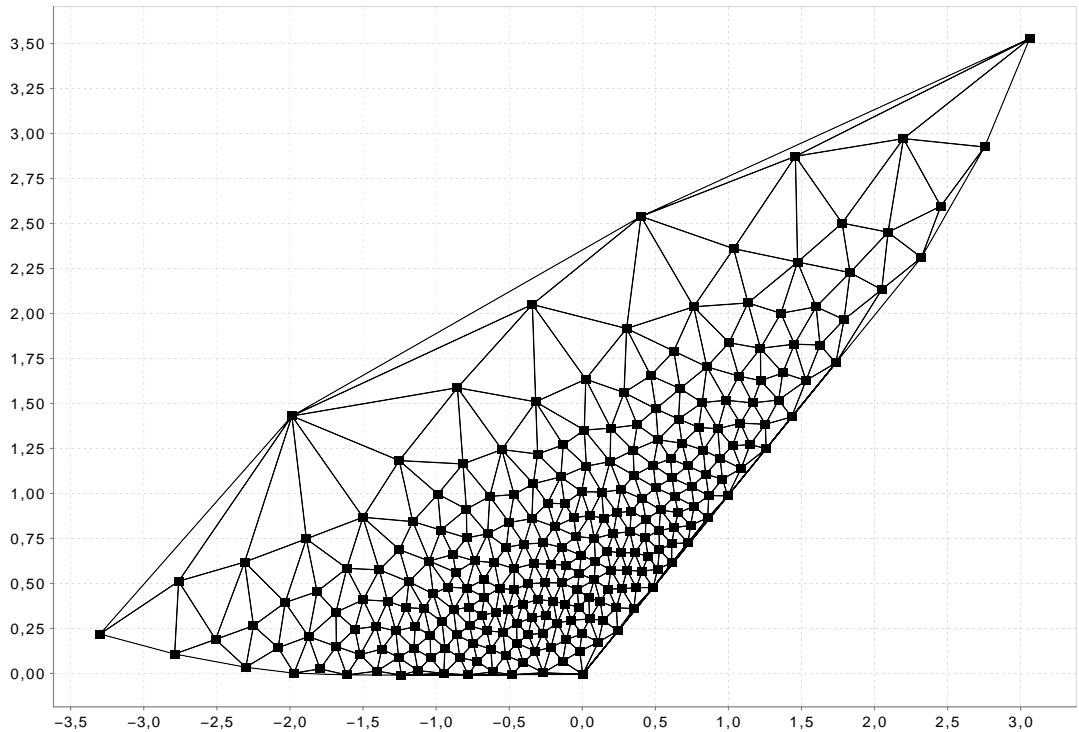}
   \caption{Dual Quantization of the joint distribution a Brownian motion at $T
   = 1$ and its supremum over $[0,1]$ ($N = 250$).}
  \label{fig:plotBMsup}
\end{figure}

\small
\medskip

\noindent {\sc Acknowledgement:} The authors thank one of  the referees for his extremely careful reading of the manuscript. 

\medskip
\bibliography{myliterature}

\normalsize
\appendix
\section*{Appendix}

 The table below provides in a synthetic way the  respective main features of both Voronoi and Delaunay (dual) quantization. 
 
 \smallskip
 Let $\Gamma=\{x_1,\ldots,x_{_N}\}\subset \R^d$ be a grid of size $N\ge 1$ and let  $F:\R^d\to \R$ be a function.

\bigskip

\small

\hskip -2 cm \begin{tabular}{ c|| c|c|}

{\em quantization mode}& $iq=vq$ (Voronoi) & $iq=dq$ (Delaunay)\\
\hline \hline 
&&  $\widehat \xi^{dq} = {\cal J}^*_{\Gamma}(\omega_0,\xi)\;$ with $ \;{\cal J}^*_{\Gamma}(\omega_0,\xi)=\qquad\qquad $ \\
$\!\!  \!  \xi\!\in \R^d$ & $\displaystyle \widehat \xi ^{vq} =\pi_{\Gamma}(\xi) \!\in\underset{{x_k\in \Gamma}}{\rm argmin}\|\xi-x_k\|$ &   $\displaystyle  \hskip -0,2 cm \sum_{x_k\in {\cal T}(\xi)}\hskip -0.25 cm x_k\mbox{\bf 1}_{_{\{\lambda^*_1(\xi)+\cdots+\lambda^*_{k-1}(\xi)\le U(\omega_0)\le\lambda^*_1(\xi)+\cdots+\lambda^*_{k}(\xi) \}}}\!\!\in {\cal T}(\xi)\!\subset \Gamma$ \\
\hline

  $\!\!\!X:\Omega\to \R^d$ &   $ \widehat X ^{vq}= \pi_{_{\Gamma}}(X)$ & $\widehat X^{dq}(\omega_0,\omega)= {\cal J}^*_{\Gamma}(\omega_0, X(\omega))$ \\

\hline \hline 
$\!\!\!\E \big(\widehat X ^{iq}\,|\,X=\xi \big)$ &   $\widehat \xi ^{vq}$ & $ \xi$ \\&&\\

\hline 
$\!\!\!\E \big( X |\widehat X^{iq} =x_k \big)$ & $x_k$  (only if $\Gamma$ is $L^2(\Prob_{_{\!X}})$-{\em optimal})& $\times$ \\

\hline \hline 
$\!\!\!\E \big(F(\widehat X ^{iq})|X=\xi \big)$ &  $ F(\widehat \xi ^{vq})= (F\circ \pi_{\Gamma})(\xi)$& \hskip -0,75 cm $\displaystyle \mathbb{J}^*_{\Gamma}(F)(\xi)\!:=\!\E_{\Prob_0}\big(F({\cal J}^*_{\Gamma}(\omega_0, \xi))\big)= \sum_{x_k\in {\cal T}(\xi)}\lambda^*_k(\xi)F(x_k)$\\

(funct. approx. op.) \hskip -0,55 cm  & (stepwise constant) & (Lipschitz  \& stepwise affine on $\conv(\Gamma)$) \\

& $\approx F(\xi)+[F]_{_{\rm Lip}}{\rm dist}(\xi,\Gamma)$& $\approx F(\xi)+[DF]_{_{\rm Lip}}\E_{\Prob_0}\big(\|\widehat \xi^{dq}-\xi \|^2 \big)$\\&&\\

\hline 
$\!\!\!\E \big(F(X) | \widehat X ^{iq}=x_k\big)$ &  $\! \approx \! F(x_k) \!+\! [DF]_{_{\rm Lip}}\E\big(\|X-x_k\|^2 | \widehat X^{vq}\!=\!x_k \big)$ \hskip -0,25 cm & \\
&& $\times$\\
& only  if $\Gamma$ is $L^2(\Prob_{_X})$-optimal &\\
\hline 
\end{tabular}

\normalsize
 \bigskip 
 
In particular, this table shows that  both quantizations methods are connected with a {\em functional approximation operator}: 
 
 \smallskip
 -- Voronoi quantization with a {\em projection} operator ($F\mapsto F\circ \pi_{\Gamma}$) on  {\em stepwise constant} functions 
 
 \smallskip
  -- Delaunay quantization with an {\em interpolation}  operator ($F\mapsto  F \circ  \mathbb{J}^*_{\Gamma}$) on  {\em stepwise affine} functions.

 \medskip 
These two operators are intrinsic in the sense that they do not  depend on the distribution of the   random vector $X$.

\end{document}